\DeclareMathOperator{\transverse}{\cap\kern-7.75pt\top}
\newcommand{\T}{\operatorname{T}}
\newcommand{\supp}{\operatorname{supp}}
\newcommand{\Lip}{\operatorname{Lip}}
\newcommand{\argmin}{\operatornamewithlimits{argmin}}
\newcommand{\Env}{{\bf{Env}}(\{T_i\}_{i=1}^\nCur)}
\newcommand{\Proj}{\operatorname{Proj}}
\newcommand{\diam}{\operatorname{diam}}
\newcommand{\haus}{\operatorname{H}}
\newcommand{\R}{\Bbb{R}}
\newcommand{\Hd}{\mathcal{H}}
\newcommand{\F}{\Bbb{F}}
\newcommand{\bk}{\mathcal{B}}
\newcommand{\es}{\mathcal{E}}
\newtheorem{thm}{Theorem}[subsection]
\newtheorem{defn}[thm]{Definition}
\newtheorem{cor}[thm]{Corollary}
\newtheorem{lemma}[thm]{Lemma}
\newtheorem{rem}[thm]{Remark}
\definecolor{titlered}{rgb}{0.6,0,0}
\newcommand{\red}{\color{red}}
\newcommand{\green}{\color{green}}
\newcommand{\blue}{\color{blue}}
\newcommand{\myell}{\,\hbox{\vrule height 6pt depth 0pt
\vrule height 0.4pt depth 0pt width 5pt}\,}
\let\oldlemma\lemma
\renewcommand{\lemma}{%
  \crefalias{thm}{lemma}
  \oldlemma}
\Crefname{lemma}{Lemma}{Lemmas}
\let\olddefn\defn
\renewcommand{\defn}{%
  \crefalias{thm}{defn}
  \olddefn}
\Crefname{defn}{Definition}{Definitions}
\let\oldrem\rem
\renewcommand{\rem}{%
  \crefalias{thm}{rem}
  \oldrem}
\Crefname{rem}{Remark}{Remarks}
\let\oldcor\cor
\renewcommand{\cor}{%
  \crefalias{thm}{cor}
  \oldcor}
\Crefname{cor}{Corollary}{Corollaries}
\let\oldclaim\claim
\renewcommand{\claim}{%
  \crefalias{thm}{claim}
  \oldclaim}
\Crefname{claim}{Claim}{Claims}
\newcommand{\circled}[1]{\tikz[baseline=(char.base)]{
    \node[shape=circle,draw,inner sep=1pt] (char) {#1};}}
\newcommand{\Isum}{\circled{I}}
\newcommand{\m}{\hspace*{-0.08in}-}
\newcommand{\add}[1]{\textcolor{blue}{#1}}
\definecolor{darkgrn}{rgb}{0, 0.8, 0}
\newcommand{\Z}{ {\mathbb Z} }
\newcommand{\vb}{ \mathbf{b} }
\newcommand{\vc}{ \mathbf{c} }
\newcommand{\vr}{ \mathbf{r} }
\newcommand{\vs}{ \mathbf{s} }
\newcommand{\vt}{ \mathbf{t} }
\newcommand{\vv}{ \mathbf{v} }
\newcommand{\vw}{ \mathbf{w} }
\newcommand{\vx}{ \mathbf{x} }
\newcommand{\vy}{ \mathbf{y} }
\newcommand{\vzero}{ \mathbf{0} }
\newcommand{\valpha}{ \bm{\alpha} }
\newcommand{\Eu}{\mathcal{E}_U}
\newcommand{\boundary}{\partial}
\newcommand{\vol}{\operatorname{V}}
\newcommand{\mass}{\operatorname{M}}
\newcommand{\Closure}{\operatorname{Closure}}
\newcommand{\hdist}{\operatorname{hdist}}
\newcommand{\grid}{\operatorname{grid}}
\newcommand{\Comp}{\operatorname{Comp}}
\newcommand{\Cone}{\operatorname{Cone}}
\newcommand{\mn}[1]{\bar{#1}}
\newcommand{\mdn}[1]{\hat{#1}} 
\newcommand{\mrwsmsp}{{\sc MRWSMSP}}
\newcommand{\dmrwsmsp}{{\sc DMRWSMSP}}
\theoremstyle{definition}
\newtheorem{definition}[thm]{Definition}
\theoremstyle{remark}
\newtheorem{remark}[thm]{Remark}
\newcommand{\nCur}{N}           
\newcommand{\ec}[1]{\kappa(#1)} 
\newcommand{\dimsp}{d}          
\newcommand{\dimcur}{p}         
\begin{document}

\title{{\bf Median Shapes}} 
\author[1]{Yunfeng Hu\thanks{Yunfeng.Hu90@gmail.com}}
\author[1]{Matthew Hudelson\thanks{mhudelson@wsu.edu}} 
\author[1]{Bala Krishnamoorthy\thanks{bkrishna@math.wsu.edu}}
\author[1]{Altansuren~Tumurbaatar\thanks{altaamgl@gmail.com}}
\author[1]{Kevin R. Vixie\thanks{vixie@speakeasy.net}} 
\affil[1]{Mathematics and Statistics, Washington State University}

\maketitle

\thispagestyle{empty}
\begin{abstract}
  We introduce and begin to explore the mean and median of finite sets of shapes represented as integral currents.
  The median can be computed efficiently in practice, and we focus most of our theoretical and computational attention on medians.
  We consider questions on the existence and regularity of medians.
  While the median might not exist in all cases, we show that a mass-regularized median is guaranteed to exist.
  When the input shapes are modeled by integral currents with shared boundaries in codimension $1$, we show that the median is guaranteed to exist, and is contained in the \emph{envelope} of the input currents.
  On the other hand, we show that medians can be \emph{wild} in this setting, and smooth inputs can generate non-smooth medians.

  For higher codimensions, we show that \emph{books} are minimizing for a finite set of $1$-currents in $\R^3$ with shared boundaries.
  As part of this proof, we present a new result in graph theory---that \emph{cozy} graphs are \emph{comfortable}---which should be of independent interest.
  Further, we show that regular points on the median have book-like tangent cones in this case.

  From the point of view of computation, we study the median shape in the settings of a finite simplicial complex.
  When the input shapes are represented by chains of the simplicial complex, we show that the problem of finding the median shape can be formulated as an integer linear program.
  This optimization problem can be solved as a linear program in practice, thus allowing one to compute median shapes efficiently.

  We provide open source code implementing our methods, which could also be used by anyone to experiment with ideas of their own.
  The software could be accessed at \href{https://github.com/tbtraltaa/medianshape}{https://github.com/tbtraltaa/medianshape}.
\end{abstract}

\clearpage
\small{\tableofcontents}

\clearpage
\section{Introduction}
\label{sec:intro}

Our goal is to study shapes and statistics in shape spaces.
The results of any such study depend critically on how we represent shapes, and on what distance we use in that representational space.
Given that statistics in shape spaces is not a new endeavor, there have been a variety of choices for both representations of as well as distances between shapes, leading to an equally diverse set of results.
For instance, see \cite{beg-2005-computing, charpiat-2006-distance, cremers-2003-shape, goodall-1991-procrustes, haker-2004-optimal, kendall-1981-statistics, krim-2006-statistics, rohlf-1999-shape, rumpf-2011-variational}, as well as the references they contain.

In this paper, we take the (mostly) new approach of representing shapes as \emph{currents}.
This approach leads very naturally to the use of \emph{flat norm} as a distance between shapes.
Previous work on related approaches include~\cite{aouada-2009-geometric, aouada-2010-squigraphs, morgan-2007-l1tv, vixie-2010-multiscale}, earlier work by Glaunes and collaborators who used currents to represent 2-dimensional surfaces in $\R^3$ and a distance similar to the flat norm~\cite{glaunes-2007-1, glaunes-2005-1, vaillant-2005-1}, as well as the more recent work from the same group by Charon et al.~\cite{charon-2013-analysis, charon-2013-varifold, charon-2014-functional} and Kaltenmark \cite{kaltenmark-2016-geometrical}.
Perhaps the closest previous results to our work is the paper by Berkels, Linkmann, and Rumpf~\cite{berkels-2010-an-sl2}.

We work with variational definitions of means and medians, which naturally lead to optimization problems that are easy to state.
On the theoretical side, we prove several results on existence and regularity of medians.
On the computational side, the optimization problem to find the median turns out to be quite tractable (solvable as a linear program in practice).
In fact, the computational tractability also motivated in part our efforts toward the theoretical characterization of the median (as opposed to the mean).
We begin by recalling some facts about means and medians.

\subsection{Means and Medians in $\R^\dimsp$}
\label{sec:medians_in_Rn}

While the mean in the context of a set of numbers $\{x_i\}_{i=1}^\nCur\subset\R$ or, more generally, a set of points in $\R^\dimsp$, $\{\vx_i\}_{i=1}^\nCur\subset\R^\dimsp$, is most often thought of as
\[ \bar{\vx} = \frac{1}{\nCur}\sum_{i=1}^\nCur \vx_i \, ,\]
the variational definition:
\[ \bar{\vx} = \argmin_\vx\left\{\sum_{i=1}^\nCur ||\vx_i-\vx||^2\right\}\]
gives us the same result when $||\cdot||$ is the usual Euclidean norm in $\R^\dimsp$.
Analogously, the median is commonly defined as a ``middle number'' for a set of numbers: 

\medskip

\noindent \emph{A median $\hat{x}$ of a set of numbers  $\{x_i\}_{i=1}^\nCur\subset\R$ is any $\hat{x}\in\R$ such that $x_i\geq \hat{x}$ for at least half of the $i$'s and $x_i\leq  \hat{x}$ for at least half of the $i$'s}.\footnote{Sometimes the definition is modified slightly so as to produce a unique number:
sort the $x_i$ and take the middle number if $\nCur$ is odd, or take the middle two and average them if $\nCur$ is even.}

\medskip
\noindent Again, there is a variational version which gives this result when $||\cdot||$ is the Euclidean norm, which in this case (for numbers in $\R$) is also equal to the $1$-norm:
\[ \hat{x} = \argmin_x\left\{\sum_{i=1}^\nCur ||x_i-x||\right\}.\]

\medskip
\noindent In the case that $\{\vx_i\}_{i=1}^\nCur\subset\R^\dimsp$, we arrive at the following characterization of their median:

\medskip

\noindent \emph{If there is a point $\hat{\vx} \not\in\{\vx_i\}_{i=1}^\nCur$ such that
  \[ \sum_i \frac{\vx_i - \hat{\vx}}{||\vx_i - \hat{\vx}||} = \vzero \]
  then $\hat{\vx}$ is the median, otherwise $\hat{\vx}=\vx_i$ for some  $1\leq i \leq \nCur$.
}

\subsection{Shapes as Currents}
\label{sec:currents}

We represent shapes as \emph{currents}.
One can gain much of the intuition for what $\dimcur$-dimensional currents are, as well as for how they behave, by thinking of a current $T$ as a union of a finite number of pieces of oriented $\dimcur$-dimensional smooth submanifolds in $\R^\dimsp$, together with an orienting $\dimcur$-vector field on these submanifolds.

\begin{figure}[htp!]
  \centering
  \input{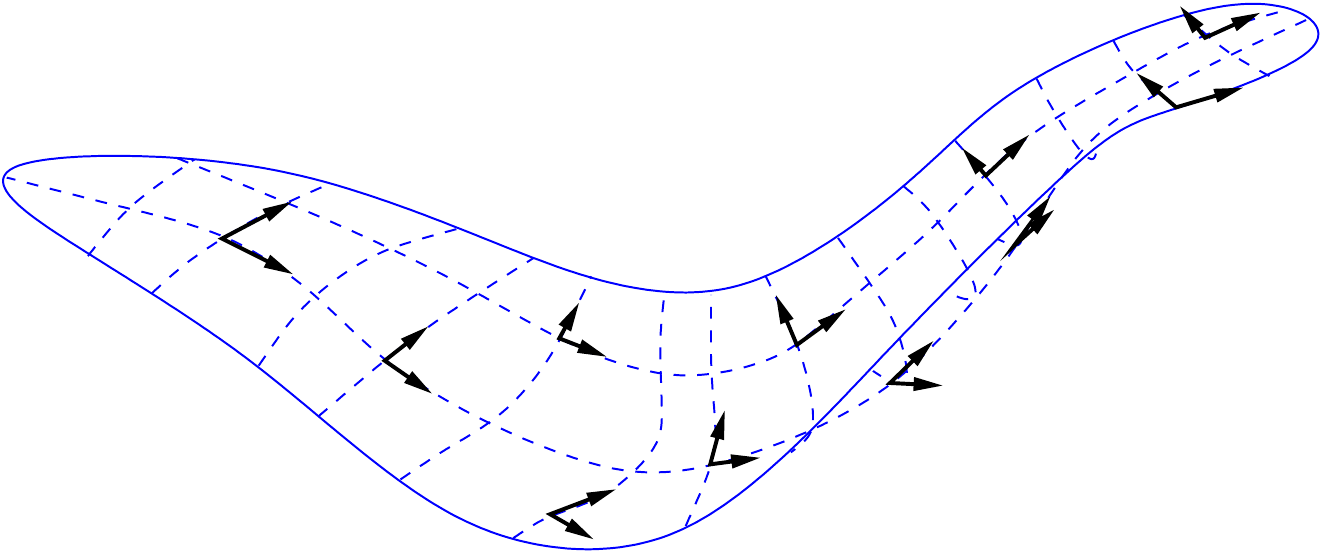_t}
  \caption{An oriented $2$-dimensional submanifold is a $2$-current when it is used to turn $2$-forms into numbers through integration. \label{fig:submanifold}}
\end{figure}

More precisely, a $\dimcur$-current in $\R^\dimsp$ is any element of the dual space of smooth, compactly supported $\dimcur$-forms in $\R^\dimsp$.
Notice that while we can easily identify the finite union of elements from the dual space as mentioned above with the current defined by integration of a form over that finite union, there is no reason to believe that all possible currents are of this form.
In fact there is a very large zoo of currents: see for instance Chapter 4 of the book \emph{Geometric Measure Theory: A Beginners   Guide} by Frank Morgan~\cite{morgan-2008-geometric}.
(This book offers the best first look at geometric measure theory, and is written to both introduce the subject of geometric measure theory as well as to act as an interface to the authoritative reference on the subject by Federer~\cite{federer-1969-geometric}.
See also ~\cite{evans-2015-measure, krantz-2008-geometric, lin-2002-geometric, maggi-2012-sets, mattila-1999-geometry, simon-1984-lectures}.)

We work with \emph{integral currents}.
To define them, we need the notion of rectifiable sets.
For the sake of completeness, we list the definition of Hausdorff measure first.
\begin{rem}
  Hausdorff measure of a set $E \subset \R^n$ is defined using efficient covers of $E$.
  Intuitively, $\Hd^\dimcur(E)$ is the $\dimcur$-dimensional volume of $E$; we compute it as
  \[ \Hd^\dimcur(E) = \lim_{\delta\rightarrow 0}
  \inf_{\mathcal{C}_\delta} \sum
  \alpha(\dimcur)\left(\frac{\diam(C_i)}{2}\right)^\dimcur\,,  \]
  where  the $\mathcal{C}_{\delta}$'s are the collections of sets $\{C_i\}_i^\infty$ such that $E\subset \bigcup_i C_i$ and $\diam C_i < \delta$, and $\alpha(\dimcur)$ is the volume of the unit ball in $\R^\dimcur$.
  (This definition works for any real $\dimcur > 0$ in  which case $\alpha(\dimcur)$ is extended to non-integer $\dimcur$  using the $\Gamma$ function.)
\end{rem}

\begin{rem}
  Note that in $\R^d$, $\Hd^d = \mathcal{L}^d$: $d$-dimensonal
  Hausdorff measure equals Lebesgue measure in $\R^d$.
\end{rem}

\begin{defn}[Rectifiable Sets]
  A set $E$ is a $\dimcur$-rectifiable subset of $\R^\dimsp$ if
  \[ E \subset \left\{ \bigcup_i f_i(\R^\dimcur) \cup N_0 \right\}, \]
  where each of the $f_i:\R^\dimcur\rightarrow\R^\dimsp$ are Lipschitz, and the $\dimcur$-Hausdorff measure $\Hd^\dimcur(N_0) = 0$.
\end{defn}

\noindent In \cref{fig:rectifiable} we show a simple rectifiable set.
This rectifiable set can be considered perfectly nice, insofar as rectifiable sets are concerned.
That is, the singularities, when considered from a smooth perspective, where the curves cross, do not make this rectifiable curve unusual or special from the perspective of rectifiable sets.
\begin{figure}[htp!]
  \centering
  \input{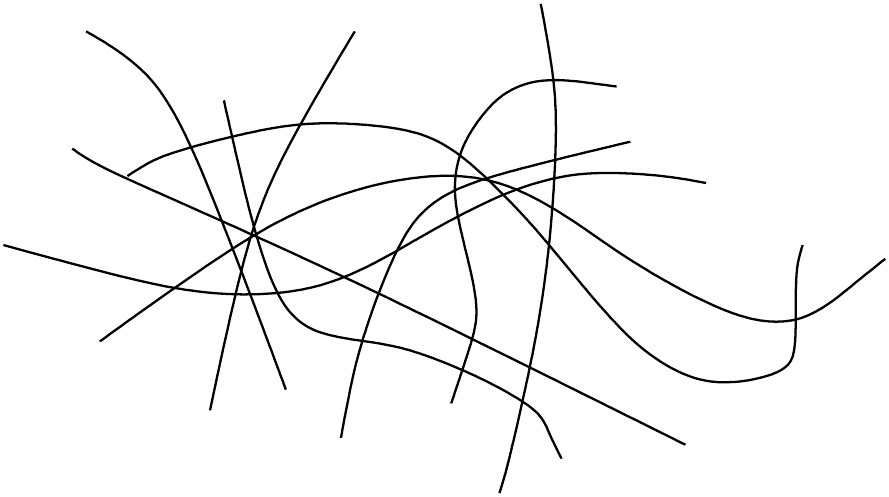_t} 
  \caption{Example of a $1$-dimensional rectifiable set. \label{fig:rectifiable}}
\end{figure}


In preparation for the definition of a current, we need the definition of $\dimcur$-vector and $\dimcur$-covector.
For a more complete, yet still accessible introduction to $\dimcur$-vector and $\dimcur$-covector (as well as currents and other ideas) see the book by Frank Morgan~\cite{morgan-2008-geometric}.
\begin{defn}[$\dimcur$-vector]\label{def-p-vec}
  Informally, but not inaccurately, one can think of a $\dimcur$-vector as the $\dimcur$-plane spanned by $\dimcur$ vectors.
  It has a magnitude equal to the $\dimcur$-volume of the parallelepiped defined by those vectors and it also has a sign, known as the orientation.
\end{defn}
\begin{defn}[$\dimcur$-covector]\label{d-covec}
  A $\dimcur$-covector is a member of the dual space to the vector space of $\dimcur$-vectors.
  In other words, it is a continuous linear functional mapping the space of $\dimcur$-vector to the real numbers.
\end{defn}
\begin{rem}
  $\dimcur$-vector fields and $\dimcur$-covector fields are simply smooth functions that assign to every point in space a $\dimcur$-vector or a $\dimcur$-covector.
  Another name for $\dimcur$-covector fields is $\dimcur$-forms.
\end{rem}

\begin{defn}[Rectifiable Currents] \label{def-recCurrent}
  We say $R$  is a rectifiable current if there is a $\dimcur$-vector field $\vec{\zeta}(\vx)$ in $\R^\dimsp$, a integer valued function  $m:\R^\dimsp\rightarrow\Z$, and a rectifiable set $E$ with $\int_E  |m(\vx)| d\Hd^{\dimcur}\vx < \infty$ such that, for any  $\dimcur$-form $\omega$,
  \[R(\omega) = \int_E m(\vx)\omega(\vec{\zeta}(\vx)) \; d\Hd^{\dimcur}\vx \, . \]
\end{defn}

\noindent In \cref{fig:current}, we show a current built using the rectifiable set shown in \cref{fig:rectifiable}.
\begin{figure}[htp!]
  \centering
  \input{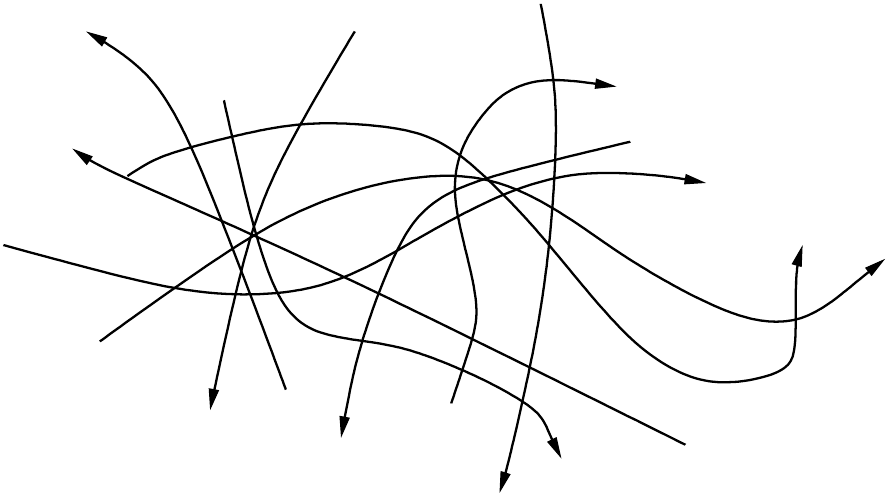_t}
  \caption{Orienting the rectifiable set in \cref{fig:rectifiable} gives a $1$-current. \label{fig:current}}
\end{figure}

\begin{defn}[Boundary of a Current] \label{def-bdyCurrent}
  We define the boundary of a $\dimcur$-current $T$ to be the $(\dimcur-1)$-current $\boundary T$ specified by
  \[\boundary T(\omega) \equiv T(d\omega),\]
  where $d\omega$ denotes the exterior derivative of the $(\dimcur-1)$-form $\omega$.
\end{defn}

\noindent In \cref{fig:boundary}, we show the boundary of the current
shown in \cref{fig:current}.
\begin{figure}[htp!]
  \centering
  \input{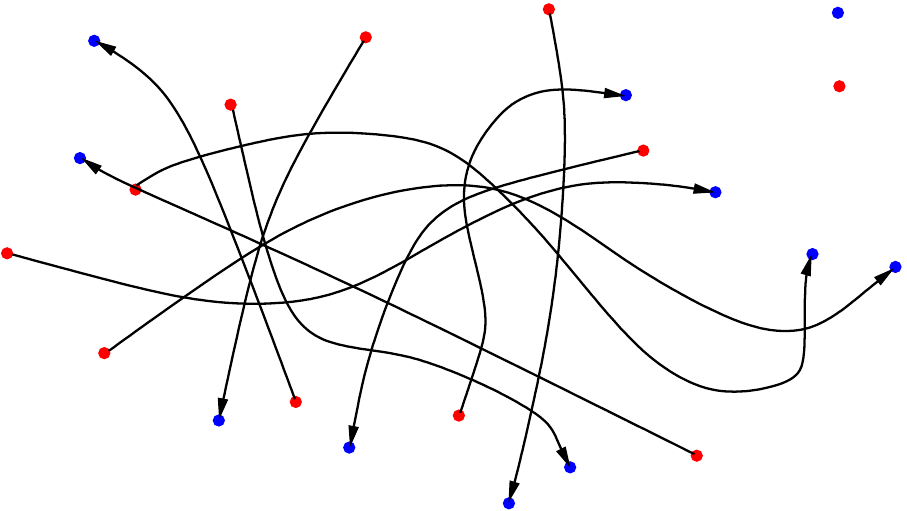_t}
  \caption{The boundary of a $1$-dimensional current is a $0$-dimensional current.
    The boundary is the union of the red and blue point masses here.
    \label{fig:boundary}}
\end{figure}

\begin{defn}[Mass of a Current] \label{def-massCurrent}
  Let $|\cdot|$ denote the norm on the space of $\dimcur$-covectors. Then
  \[ \mass(T) = \sup_\omega \{T(\omega) : |\omega| \leq 1 \;\Hd^\dimcur \; \text{\rm almost everywhere}\}. \]
\end{defn}

\begin{rem}[Mass of Rectifiable Current]
  If $T$ is rectifiable, then $\mass(T) = \int_E |m(\vx)| d\Hd^{\dimcur}\vx < \infty$.
\end{rem}

\begin{defn}[Integral Current] \label[defn]{def-intCurrent}
  A current $I$ is an integral current if both $I$ and $\partial I$ are rectifiable currents, implying that both have finite mass, i.e.,
  \[ \mass(I) + \mass(\boundary I) < \infty. \]
\end{defn}

\begin{rem}[Integral Currents, intuitively]
  We now revisit the intuitive picture introduced in the first part of this subsection:
  one can go a long way toward understanding integral currents by thinking of a finite union of pieces of smooth, oriented  $\dimcur$-submanifolds of $\R^\dimsp$.
  While one needs to allow  infinite unions to get an arbitrary integral $\dimcur$-current in  $\R^\dimsp$ (which certainly adds another level of complication), a lot of ground can be covered with just finite unions.
\end{rem}

We work with integral currents as the representation of shapes.
While we will use more of the technology of integral currents than what we outlined above, this short introduction will help the reader to begin building an intuition for integral currents.

\subsection{The Multiscale Flat Norm}
\label{sec:flatnorm}

The flat norm, introduced by Whitney in the 1950's~\cite{whitney-1957-geometric}, turned out to be the right norm for the space of currents.
It was central to the seminal work of Federer and Fleming in 1961, in which they established the existence of minimal surfaces for a broad class of boundaries.
Under this norm, bounded sets of integral currents possess finite $\epsilon$-nets, leading to a compactness theorem and the existence of minimal surfaces.

The motivation for the flat norm can be illustrated using the following example:
consider the current $T$ defined by the unit circle centered at the origin, oriented in the counterclockwise direction and the current $T_{\epsilon}$, also a unit circle, oriented counterclockwise, but centered at $(\epsilon,0)$.
If one attempts to measure the size of the difference $T-T_{\epsilon}$ using the mass of the difference $\mass(T-T_{\epsilon})$, one finds that $\mass(T-T_{\epsilon}) = \mass(T) + \mass(T_{\epsilon})$ for all $\epsilon \neq 0$, which makes it unsuitable as a measure of distance between currents.
Instead we would like a distance that behaves more smoothly, matching the intuitive sense that this distance between $T$ and $T_{\epsilon}$ goes to $0$ as $\epsilon\rightarrow 0$.

Such a distance could be defined by decomposing the difference $T-T_{\epsilon}$ into two pieces which we measure differently.
More explicitly, we can decompose a $\dimcur$-current $H$, (for example, $H = T-T_{\epsilon}$), into two components: $H = (H - \boundary S) + (\boundary S)$, where $S$ is any $(\dimcur+1)$-current.
Now, instead of defining the size of $H$ to be $\mass(H-\partial S) + \mass(\partial S)$, we define the size of $H$---the flat norm of $H$---as the infimum:
\[
\F(H) = \inf_{S\in\mathcal{D}^{\dimcur+1}} \mass(H-\boundary S) + \mass(S),\]
where $\mathcal{D}^{\dimcur+1}$ is the space of $(\dimcur+1)$-currents.

Returning to the case of $T$ and $T_\epsilon$ above, we find that for small enough $\epsilon$, $\F(T-T_{\epsilon}) = 2\add{\pi}\epsilon + O(\epsilon^2)$, the area of the set whose boundary is $T-T_{\epsilon}$.
See \cref{fig:flatnorm} for an illustration of the flat norm for a more general instance with $T_1, T_2$ being general closed curves (rather than unit circles).

With the aid of the Hahn-Banach theorem, one can prove this infimum is always attained.
On the other hand, this result is guaranteed only if we minimize over all currents.
In the case in which we minimize over integral currents, the minimum need not be attained in all cases~\cite{ibrahim-2016-flat}.

The \emph{multiscale flat norm}, a simple yet useful generalization of the flat norm introduced by Morgan and Vixie \cite{morgan-2007-l1tv},
is given by
\[ \F_{\lambda}(H) = \inf_{S\in\mathcal{D}^{\dimcur+1}} \mass(H-\boundary S) + \lambda \mass(S),~~\mbox{ for } \lambda \geq 0.\]

\begin{figure}[htp!]
  \centering
  \input{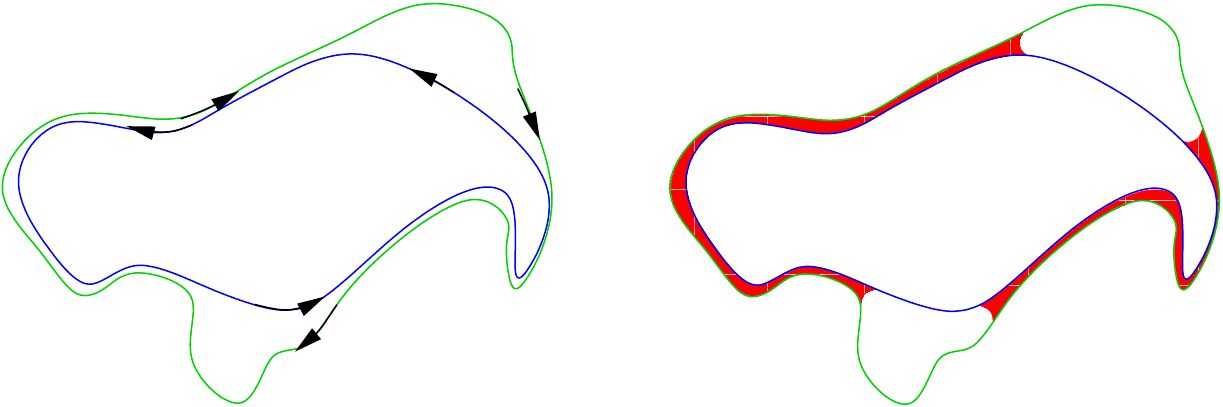_t}  
  \caption{The optimal Flat norm decomposition of two curves $T_1$ and $T_2$.}
  \label{fig:flatnorm}
\end{figure}

\subsection{Means and Medians in the Space of Integral Currents}
\label{sec:motive}

Suppose we have a set of integral $\dimcur$-currents $\{T_i\}$.
We define their \emph{mean} as 
\begin{equation} \label{eq:defmean}
  \mn{T} = \argmin_{T\in\mathcal{I}^\dimcur} \sum_i \F_{\lambda}(T-T_i)^2 \, ,
\end{equation}
and their \emph{median} as
\begin{equation} \label{eq:defmdn}
  \mdn{T} = \argmin_{T\in\mathcal{I}^\dimcur} \sum_i \F_{\lambda}(T-T_i).
\end{equation}
Notice that we have used the variational definitions of the mean and median, and replaced $\R^\dimsp$ with the space of integral $\dimcur$-currents $\mathcal{I}^\dimcur$, and the Euclidean norm with the multiscale flat norm.

\medskip
\noindent We will study also the \emph{mass regularized} versions of the mean and median: 
\begin{equation} \label{eq:defmsregmean}
  \mn{T}_{\mu} = \argmin_{T\in\mathcal{I}^\dimcur} \sum_i \F_{\lambda}(T-T_i)^2 + \mu \mass(T) \mbox{ for } \mu \geq 0,
\end{equation}
and
\begin{equation} \label{eq:defmsregmdn}
  \mdn{T}_{\mu} = \argmin_{T\in\mathcal{I}^\dimcur} \sum_i \F_{\lambda}(T-T_i) + \mu \mass(T) \mbox{ for } \mu \geq 0. 
\end{equation}

While the mean $\mn{T}$ leads to a difficult optimization problem, the median $\mdn{T}$ computation can be cast as a linear optimization problem in practice, which can be solved efficiently.
Because of our interest in both theory and computation, we will focus on the median.

\begin{rem}
  Since a minimizer is guaranteed to exist only if we minimize over \emph{all} currents, our restriction to integral currents implies  that we will need to establish existence of a minimizer in each of our cases.
\end{rem}

\subsection{Comment on our Perspectives and Goals}
\label{sec:comments-perspective}

Geometric measure theory is, in general, rather underexploited for its potential to a wide range of application areas.
As a result, these application areas have yet to offer up their rich trove of inspirations to geometric measure theory and geometric analysis.
One serious impediment to changing this situation is the rather large investment in the effort required to master the techniques and ideas in geometric measure theory, due partly to the optimal conciseness of references like Federer's famous tome~\cite{federer-1969-geometric}.
While Frank Morgan's excellent reference~\cite{morgan-2008-geometric} has begun to address this issue, there is much more to do in this regard.

In this paper, we are attempting to span the rather large gap between those who know some geometric measure theory and those who are interested in applications in shape analysis.
Because of this setting, there are some details we include that, while not quite old hat to those who know geometric measure theory or geometric analysis well, would be considered an exercise in things ``everyone knows'', and would therefore (probably) not be written down.
The proof that regular medians have ``books'' as tangent cones (see \cref{sec:regular-books}) is one such (rather involved) exercise.
Because we feel such exercises are valuable for the uninitiated, they are included, and in great detail as well.

In fact, we believe these sorts of detailed expositions should be included more often so as to facilitate a broader impact of a wide range of mathematical works.
This is especially true in this new mathematical age in which the true symbiosis between applications and
pure theory is being seen and exploited more frequently.
While this perspective would not surprise the scientists from the past---theory and applications lived in close proximity to each other before the 20th century---it is our opinion that the happy comingling and
collaboration of the pure and the applied (across STEM fields) is still far from common enough.
In the case of this paper, we readily admit that there are pieces we do not explain in enough detail for the paper to be completely self-contained across the broad readership we think may be interested in the contents.
Nevertheless, we hope that the interested, mathematically inclined scientist-reader, willing to occasionally consult Morgan's introduction~\cite{morgan-2008-geometric} (perhaps with a mathematician friend on call), will find all the ideas accessible and understandable even if a detail or two remains a bit obscure.

It is also the case that this paper is not an attempt to solve all the problems that the developments we introduce suggest.
Rather, we hope what we write will prompt others to explore and advance the ideas we have merely begun to explore.
There are other problems and challenges, some rather low hanging---especially when we include the computational arena---that we are not trying to stake out as \emph{our} discoveries.
Indeed, we would very much like others to dig in and contribute as well.
To that end, we outline some of those problems and challenges in the discussion section at the end of the
paper.

\subsection{Outline of Paper}
\label{sec:outline}

\cref{sec:unreg} begins the remainder of the paper by showing that without further assumptions, the family of medians can, in some cases, be too big, including highly irregular currents.
Regularizing the problem with a term penalizing the mass of the median, we get existence very easily.

In \cref{sec:shared1} we move to (unregularized) median for families of codimension $1$ currents that share a common boundary, and in this context we prove an existence theorem and a theorem stating that even in the case of smooth input families, we can end up with families of medians, none of which are smooth.

Next, we turn in \cref{sec:shared2} to the case of codimension $2$ input currents.
We prove that one family of surfaces which we call \emph{books} are indeed minimizers of the implicit ensemble minimal surface problem, and are in fact minimal varifolds under Lipschitz deformations in which multiplicities are counted.
This particular proof, as well as the proof showing that regular inputs can give nonsmooth medians, relies on new results from graph theory.
We also show that in the case that the medians and the resulting minimal surfaces generated by the flat norm minimization are smooth, these \emph{books} are the tangent cones at every point on the interior of the median.

\cref{sec:simp-prelim} and \cref{sec:simplicial} introduce \emph{simplicial} currents and the simplicial multiscale flat norm, and we explain how we compute medians using simplicial representations of currents (as chains) and linear programming.
This work is motivated by previous results showing that the implicit integer optimization problem for computing the simplicial flat norm can be relaxed to a real optimization problem in many important cases.
Computational examples are explored in \cref{sec:compute}, including an illustration of the fact that these calculations can be used to interpolate smoothly between shapes.

We close with discussion of the results in \cref{sec:discuss}, along with open problems and ideas concerning where these results might be useful.

\subsection{Acknowledgments}
\label{sec:acknow}

Hu acknowledges helpful conversations with Enrique Alvarado.
Krishnamoorthy acknowledges partial funding from NSF via grant CCF-1064600.
Vixie acknowledges helpful conversations with Bill Allard and Beata Vixie.

\subsection{Notation} \label{ssec:notn}

We collect here all notation used throughout the (rest of the) paper.

\begin{table}[ht!]
  \centering
  \begin{tabular}{ll}
    \hline
    symbol/notation & definition/interpretation \\ \hline
    $\vx,\vr,\vs,\vt,$... &  vectors (bold lower case letters) \\
    $\mass(\cdot)$ & mass (of a current) \\
    $\mathcal{D}^\dimcur$ & space of $\dimcur$-currents in $\R^\dimsp$\\
    $\mathcal{I}^\dimcur$ & space of integral $\dimcur$-currents in $\R^\dimsp$\\
    $\F, \F_\lambda$ & flat norm, multiscale flat norm \\
    $\partial E$, $\partial^* E$ &  topological and reduced boundaries of the set E\\
    $\Hd^d$ & d-dimensional Hausdorff Measure\\
    $\mathcal{L}^d$ & Lebesgue measure in $\R^d$\\
    $B(\vx,r)$ & Euclidean open ball of radius $r$ centered at $\vx$ \\
    $\alpha(d)$ & d-Volume of unit ball in $\R^d$: $\mathcal{L}^d(B(x,r)) = \alpha(d)r^d$\\
    $\nCur$, $\{T_i\}_{i=1}^\nCur$ & number of input currents, set of input currents.\\
    $\mn{T}, \mdn{T}$, and $\mdn{T}_{\lambda,\mu}$ & mean, median, and mass-regularized median current \\
    $\supp(T)$ & support of current $T$ \\
    $[[E]]$ & integral current defined by the $d$-dimensional set $E\subset\R^d$ \\
    $\eta\myell[[E]]$ & integral current on set $E$ with integer multiplicity function $\eta$ \\
    $\mathcal{E}_U$ & a special set of $\dimcur$-currents in $\R^{\dimcur+1}$: See \cref{def-E_U} \\
    $\Env$ & envelope of input currents $\{T_i\}_{i=1}^{\nCur}$ \\
    $T\myell U$ & current $T$ with restriction to the set $U$ \\
    $T^{\pi}_i(\epsilon_s)$ & projection of current $T_i$ onto cubical grid of size $2\epsilon_s$ (\cref{thm-mdnexist})\\
    $T_{\vx} S$ & tangent space of $S$ at point $\vx$ \\
    $Cyl(r,\delta)$ & Cylinder with bottom (or top) radius $r$ and height $\delta$ \\
    $grid(\epsilon_s)$ & grid of cubes with side length $2\epsilon < R$ \\
    $\Cone(h,\theta)$ & symmetric cone with height $h$ and angle $\theta$; See \cref{fig-tgtcone,fig-tgtconeS}
  \end{tabular}

\end{table}

\clearpage

\section{Theorems and Examples for Arbitrary Integral Inputs}
\label{sec:unreg}

It appears challenging to prove results about the (unregularized)
median of a set of arbitrary integral currents.  Even existence can be
challenging, since the quantity we are minimizing does not directly
control the mass of the candidate median.  Indeed, in the next
section, we see an example where the family of medians contains
sequences of currents whose masses diverge. On the other hand, for the
regularized version of the median, we get existence using tools from
geometric measure theory developed to solve minimal surface problems.

\subsection{Mass Regularized Medians Exist} 
For the mass regularized median, we easily get existence using the compactness theorem for integral currents.

\subsubsection{Existence Theorem}

\begin{thm}[Existence of $\mdn{T}_{\lambda,\mu}$]
  Let $\{T_i\}_{i=1}^\nCur \subset \mathcal{I}^\dimcur$, and suppose further that for all $i$, the support of $T_i$ lies within a finite ball:  $\supp(T_i) \subset B(\vzero,r)$ for some $r < \infty$.
  Then there exists a $\mdn{T}_{\lambda,\mu} \in \mathcal{I}^\dimcur$ such that
\[    
\mdn{T}_{\lambda, \mu} = \argmin_{T \in \mathcal{I}^p} \, \sum_{i=1}^\nCur \mathbb{F}_\lambda (T-T_i) + \mu \mass(T),
\]
and we call $\mdn{T}_{\lambda,\mu}$ a \emph{mass-regularized median}.
\end{thm}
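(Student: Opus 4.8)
The plan is to run the direct method on the functional $\mathcal{E}(T) := \sum_{i=1}^{\nCur}\F_\lambda(T-T_i) + \mu\mass(T)$: produce a minimizing sequence, extract a flat--convergent subsequence using the compactness theorem for integral currents, and conclude by lower semicontinuity. First I would check $\mathcal{E}$ is finite somewhere --- $\mathcal{E}(0) = \sum_i \F_\lambda(T_i) \le \sum_i \mass(T_i) < \infty$, since integral currents have finite mass and $\F_\lambda(H)\le\mass(H)$ (take $S=0$) --- so $m := \inf_{T\in\mathcal{I}^\dimcur}\mathcal{E}(T)\in[0,\infty)$. Fix $T^{(k)}\in\mathcal{I}^\dimcur$ with $\mathcal{E}(T^{(k)})\le m+1$ for all $k$. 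To feed these to the compactness theorem I need three uniform bounds: (a) common compact support, (b) $\sup_k\mass(T^{(k)})<\infty$, (c) $\sup_k\mass(\boundary T^{(k)})<\infty$.

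For (a) and (b): let $\pi$ be the nearest--point projection of $\R^\dimsp$ onto $\bar B(\vzero,r)$; it is $1$--Lipschitz and is the identity on each $\supp(T_i)\subset B(\vzero,r)$. Pushforward by a $1$--Lipschitz map does not increase mass, commutes with $\boundary$, and hence does not increase any $\F_\lambda(T-T_i)$ (push the competing $(\dimcur+1)$--current in the flat norm forward, and use $\pi_{\#}(T-T_i)=\pi_{\#}T-T_i$); therefore $\mathcal{E}(\pi_{\#}T^{(k)})\le\mathcal{E}(T^{(k)})$ and I may assume $\supp T^{(k)}\subset\bar B(\vzero,r)$. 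This is also exactly where the mass regularization earns its keep: $\mu\mass(T^{(k)})\le\mathcal{E}(T^{(k)})\le m+1$ forces $\mass(T^{(k)})\le(m+1)/\mu$ (using $\mu>0$).

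The hard part will be (c). It does \emph{not} follow from the bound on $\mathcal{E}$: the terms $\F_\lambda(T^{(k)}-T_i)$ control only $\F(\boundary T^{(k)}-\boundary T_i)$, and a current of small mass can carry arbitrarily large boundary mass (e.g.\ many tiny disjoint pieces) while looking cheap and essentially diffuse in the flat norm. My plan is therefore to show the minimizing sequence can be replaced by one lying in $\{\,T\in\mathcal{I}^\dimcur : \mass(\boundary T)\le M'\,\}$ for a fixed $M'$: split off from each $T^{(k)}$ any ``nearly diffuse'' summand --- small mass, large boundary mass, supported away from where the near--optimal flat--norm fillings of the $T^{(k)}-T_i$ live --- and delete it; by near--additivity of the flat norm over well--separated supports this increases no $\F_\lambda(T^{(k)}-T_i)$ and does not increase $\mass$, hence does not increase $\mathcal{E}$, and what survives has boundary mass bounded in terms of $r$, $m$, and the $\boundary T_i$. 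With (a)--(c) in hand, the compactness theorem for integral currents yields a subsequence $T^{(k_j)}\to T^{\ast}\in\mathcal{I}^\dimcur$ in the flat norm; each $\F_\lambda(\,\cdot\,-T_i)$ is Lipschitz for the flat metric (hence continuous along the subsequence) and $\mass$ is flat--lower semicontinuous, so $\mathcal{E}(T^{\ast})\le\liminf_j\mathcal{E}(T^{(k_j)})=m$, i.e.\ $T^{\ast}=\mdn{T}_{\lambda,\mu}$.
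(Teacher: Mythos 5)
Your overall strategy coincides with the paper's: take a minimizing sequence, retract it onto $\bar B(\vzero,r)$ by a $1$-Lipschitz projection so as to gain compact support without increasing the energy (the paper uses the radial projection, you use the nearest-point projection; these are the same map), extract the uniform mass bound from the regularization term $\mu\mass(\cdot)$, invoke the compactness theorem for integral currents, and conclude by continuity of $\F_\lambda$ and lower semicontinuity of $\mass$ under flat convergence. Steps (a), (b) and the final lower-semicontinuity step are correct and match the paper. To your credit, you also isolate the one hypothesis of the Federer--Fleming compactness theorem that the energy bound does \emph{not} supply --- a uniform bound on $\mass(\partial T^{(k)})$ --- whereas the paper's proof invokes the compactness theorem without commenting on this hypothesis at all. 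The worry is real: $\F_\lambda(T^{(k)}-T_i)$ controls only the flat norm of $\partial(T^{(k)}-T_i)$, and a current of small mass can carry arbitrarily large boundary mass.

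However, your step (c) is a gap, not a proof. You assert that each $T^{(k)}$ decomposes into a good part with controlled boundary mass plus a ``nearly diffuse'' summand of small mass whose support is \emph{separated} from the near-optimal fillings of the $T^{(k)}-T_i$, and that deleting the latter leaves every $\F_\lambda(T^{(k)}-T_i)$ unchanged by ``near-additivity over well-separated supports.'' Neither the existence of such a decomposition nor the separation is established, and nothing in the functional forces either: the excess boundary mass of a minimizing sequence can perfectly well be carried by pieces interleaved with the fillings or lying on the supports of the $T_i$ themselves, and deleting a summand $B$ can increase each flat-norm term by as much as $\F_\lambda(B)$, so the claim that the surgery ``does not increase $\mathcal{E}$'' needs a quantitative argument you have not given. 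Without (c), the most one gets from bounded mass and compact support is flat convergence to an integer-multiplicity flat chain of finite mass --- which is rectifiable by the closure theorem but need not have finite boundary mass --- so the limit is not guaranteed to lie in $\mathcal{I}^\dimcur$, and the argmin could in principle escape the class over which the theorem asserts existence. Supplying a genuine uniform boundary-mass bound (or an alternative route, e.g.\ minimizing first over $\{T:\mass(\partial T)\le M\}$ and controlling the limit $M\to\infty$) is the missing content, in your argument and, arguably, in the paper's as well.
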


\begin{proof}
  We choose $\{P_j\}\in \mathcal{I}^\dimcur$ such that
  \[\lim_{j\rightarrow\infty}\left(\sum_{i=1}^\nCur \F_\lambda
    (P_j-T_i) +\mu \mass(P_j)\right) = \inf_{T \in
    \mathcal{I}^\dimcur} \, \sum_{i=1}^\nCur \F_\lambda (T-T_i) +\mu
  \mass(T).\] Because of the regularization term $\mu \mass(T)$, it is
  guaranteed there exists a $C < \infty$ such that $\sup_j \mass(P_j)
  < C$.  Notice that for each $i$ and $j$, there is an optimal $S_i^j
  \in\mathcal{I}^{\dimcur+1}$ such that $\F_{\lambda}(P_j - T_i) =
  \mass(P_j - T_i - \boundary S_i^j) + \lambda \mass(S_i^j)$.  Because
  none of the $T_i$'s go outside the ball $B(\vzero,r)$, we can
  radially project the minimal $S_i^j$'s and the $P_j$'s onto the ball
  $B(\vzero,r)$ and obtain a decomposition that is possibly better (if
  $P_j$ and the $S_i^j$ intersect $\R^\dimsp\setminus B(\vzero,r)$
  nontrivially).  This result implies that $P_j$ (and $S_i^j$) are
  also supported in the ball $B(\vzero,r)$.  Now we invoke the
  compactness theorem (Chapter 5 of ~\cite{morgan-2008-geometric}) to
  get a limit $\hat{P}$ of the $P_j$ that is also supported in
  $B(\vzero,r)$.

  It remains to show that this current is a median, i.e., that
  \[\sum_{i=1}^N \mathbb{F}_\lambda (\hat{P}-T_i) +\mu \mass(\hat{P})
  \rightarrow \inf_{T\in \mathcal{I}^\dimcur} \sum_{i=1}^\nCur
  \F_\lambda (T-T_i) +\mu \mass(T).\] But the flat norm is (of course)
  continuous under the flat norm, and the mass $\mass$ is lower
  semicontinuous under the flat norm.  Therefore the regularized
  median functional is lower semicontinuous under the flat norm,
  implying the result.
\end{proof}

\subsection{Medians Can Be Trivial}
We proved that mass regularized medians always exist.  However, this
result does not imply the median has to be nontrivial.  In fact, in
some cases, it can only to be trivial.  In \cref{exm:0-current}, we
show that the unique, unregularized median for a particular set of
three input currents is the trivial (or empty) $0$-current.
Furthermore, we explain that the unique, regularized median is also
the trivial $0$-current in this case (in \cref{rem:0-current}).

\begin{lemma}[Medians can be trivial]\label{exm:0-current}
  Let $\lambda =1$ and let $T_1$, $T_2$ and $T_3$ be three
  $0$-currents (signed masses), each with mass $1$ and positive
  orientation $+1$, which are more than $4$ units away from each
  other. Then the unique median for $T_1,T_2$ and $T_3$ is the trivial
  $0$-current.
\end{lemma}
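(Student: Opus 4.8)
The plan is to show that the trivial $0$-current has objective value $3$, that $3$ is a lower bound for every integral $0$-current, and that equality forces triviality.

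\textbf{Value at the trivial current and a lower bound.} For a $0$-current $H=\sum_y m_y\delta_y$, testing against the admissible constant $0$-form $\1$ (here $|\1|\le 1$ and $|d\1|=0\le 1$) gives $\F_1(H)\ge|H(\1)|=\big|\sum_y m_y\big|$, while $S=0$ gives $\F_1(H)\le\mass(H)$; together with flat-norm duality $\F_1(H)=\sup\{H(f):|f|\le 1,\ \Lip(f)\le 1\}$ this yields $\F_1(\delta_p)=1$ and, more generally, $\F_1(\delta_p-\delta_q)=\min\{2,|p-q|\}$. In particular $\F_1(T_i-T_j)=2$ for $i\ne j$, since $|p_i-p_j|>4$. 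Hence $\sum_{i=1}^3\F_1(\vzero-T_i)=\sum_{i=1}^3\F_1(T_i)=3$. For an arbitrary $T\in\mathcal{I}^0$, the triangle inequality for the seminorm $\F_1$ gives $\F_1(T_i-T_j)\le\F_1(T-T_i)+\F_1(T-T_j)$; summing over the three pairs, $6\le 2\sum_i\F_1(T-T_i)$, so $\sum_i\F_1(T-T_i)\ge 3$ and the trivial current is a median.

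\textbf{Reduction of uniqueness.} Let $T$ be any median. Then all three triangle inequalities above are equalities, so with $a_i:=\F_1(T-T_i)$ one has $a_1+a_2=a_2+a_3=a_1+a_3=2$, forcing $a_1=a_2=a_3=1$. It therefore remains to prove: if $T\in\mathcal{I}^0$ and $\F_1(T-\delta_{p_i})=1$ for $i=1,2,3$, then $T=\vzero$. For this I would use the combinatorial description of the flat norm of an integral $0$-current: decomposing an optimal $1$-current into oriented segments, each joining a positive to a negative unit mass and of length at most $2$ (a longer segment costs more than leaving both of its endpoints uncancelled, which costs $1+1$), one gets $\F_1(H)=\mass(H)-\max_M\sum_{(a,b)\in M}\max\{0,2-|a-b|\}$, the maximum over matchings $M$ of the positive unit masses of $H$ to its negative ones. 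Since $|p_i-p_j|>4$, the balls $B(p_i,2)$ are pairwise disjoint, so in the matching realizing $\F_1(T-\delta_{p_i})$ the extra mass introduced at $p_i$ can be cancelled only against mass of $T$ inside $B(p_i,2)$, and no matched pair can be useful for two different $p_i$ at once.

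\textbf{The case analysis (the crux).} From $1=\F_1(T-\delta_{p_i})\ge|T(\1)-1|$ one gets $T(\1)\in\{0,1,2\}$. When $T(\1)\in\{0,2\}$, attaining $\F_1(T-\delta_{p_i})=1$ forces the optimal matching of $T-\delta_{p_i}$ to pair unit masses at distance $0$, which is impossible since the positive and negative parts of $T$ have disjoint supports — unless $\mass(T)\le 2$, and a $0$-current supported on at most two points cannot sit at all three of $p_1,p_2,p_3$; so $T=\vzero$ in these cases (using only that the $p_i$ are distinct). The remaining case $T(\1)=1$ is where the hypothesis $|p_i-p_j|>4$ is essential: using the disjointness of the $B(p_i,2)$, one shows that having $\F_1(T-\delta_{p_i})=1$ for all three $i$ simultaneously forces each $B(p_i,2)$ to contain a positive and a negative unit mass of $T$ matched within distance $1$, hence $T$ to consist of three balanced dipoles, one near each $p_i$, plus one extra positive unit mass — a configuration for which $\sum_i\F_1(T-\delta_{p_i})$ is readily checked to exceed $3$, a contradiction. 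Hence $T=\vzero$, and the median is unique. The only genuinely delicate step is this case analysis; everything else is immediate.
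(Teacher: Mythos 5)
Your first two steps are correct, and the triangle-inequality route to the lower bound is actually cleaner than the paper's: from $\F_1(T_i-T_j)=2$ for $i\neq j$ you get $\sum_i\F_1(T-T_i)\ge 3$ for every $T$, and equality forces $\F_1(T-T_i)=1$ for each $i$. The matching description of $\F_1$ for integral $0$-currents is also legitimate (the paper uses the same picture via its minimal spanning $1$-currents $R_i$), and your disposal of the cases $T(\1)\in\{0,2\}$ by the parity of $\mass(T-\delta_{p_i})$ matches the paper's ``$\mass(T)$ even'' case.

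The gap is in the case $T(\1)=1$, which is where all the real content lives. Your intermediate claim --- that $\F_1(T-\delta_{p_i})=1$ forces $B(p_i,2)$ to contain both a positive and a negative unit mass of $T$ --- is false as stated: $T=\delta_q$ with $|q-p_i|=1$ satisfies $\F_1(T-\delta_{p_i})=1$ with no negative mass anywhere. Even if you patch this by using all three conditions at once, the asserted structural conclusion (``three balanced dipoles plus one extra positive'') is not the only configuration consistent with your local constraints, and the final ``readily checked'' contradiction is exactly the part that is not readily checked. What is missing is a global argument, and the paper supplies one: since $T-\delta_{p_i}$ is balanced and nonzero, $\F_1(T-\delta_{p_i})=1$ forces the optimal $S_i$ to be a \emph{perfect} matching $M_i$ of the positives and negatives of $T-\delta_{p_i}$ by segments of total length $1$ (an unmatched pair would already cost $2$). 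In the multigraph $M_1\cup M_2$ every point of $T$ has degree $2$ while $p_1$ and $p_2$ have degree $1$, so the component containing $p_1$ is a path that must terminate at $p_2$; its length is at most $\mass(S_1)+\mass(S_2)=2$, yet it joins two points more than $4$ apart --- contradiction. This alternating-path argument (the paper's $R_1\cup R_2$ step, illustrated in its Figure on the $0$-current graph) is the idea your proposal lacks; without it, or an equivalent global obstruction, the uniqueness claim in the balanced case does not close.
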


\begin{proof}
  Notice first that the objective function for the median (the
  functional we minimize to find median in \cref{eq:defmdn}) has value
  $3$ when $\mdn{T} = 0$.  Let $T$ be a nontrivial candidate median.
  Since it is an integral current, $\mdn{T}$ is a finite number of
  point masses, each with sign $+1$ or $-1$ -- note that we can get
  points with other integer multiplicities by just having some of the
  points coincide.  We consider two cases based on the cardinality of,
  i.e., number of (possibly non-distinct) points in, $T$.
  \begin{enumerate}
  \item {\bf $\mass(T)$ is even:} For each input current $T_i$, $\F_{1}(T -
    T_i) \geq 1$.  This follows because $\mass(T-T_i)$ is odd, $\mass(\partial
    S_i)$ of any integral 1-current $S_i$ is an even integer, and
    \[\mass(T-T_i - \partial S_i) \geq |\mass(T-T_i) - \mass(\partial S_i)|.\] A
    little more slowly, if we take the absolute values of the
    multiplicities of all the points in $T-T_i$ and sum them up, we get
    an odd integer.  Any $1$-current $S_i$ has boundary made up of
    pairs of points with equal multiplicity. Thus $\mass(\partial S_i)$ is
    even. Now because
     \[\mass(T-T_i - \partial S_i) \geq |\mass(T-T_i) - \mass(\partial S_i)|,\]
    we conclude that 
    \begin{eqnarray*}
      \F_{1}(T-T_i) &=& \inf_{S_i} \mass(T-T_i - \partial S_i) + \mass(S_i) \\
                  &\geq& 1 + \mass(S_i) \\
                  &\geq& 1
    \end{eqnarray*}
    Note that if any of the minimizing $S_i$'s are nonempty, then this
    also shows that $\F_{1}(T-T_i) > 1$ and, for that $T$, we have
    that the sum of the flat norms is strictly greater than $3$.  

    If all the $S_i$ are empty, then we have that either $\mass(T) = 0$ and
    $T$ is the empty $0$-current, {\bf or} $\mass(T) \geq 2$ and $\F_{1}(T
    - T_i ) > 1$ for some $i$.
  \item {\bf $\mass(T)$ is odd:} 
    \begin{enumerate}
    \item    Define $R_i$ to be the $1$-current of minimal length such, as sets
    of points (i.e. ignoring orientation) $T - T_i$ and $\boundary
    R_i$ are equal.
    \item Now consider the sign assignments to the points in each
    $T-T_i$.  Notice that either the numbers of $+1$ and $-1$ points
    are always equal for all $i$, or always not equal for all $i$.  
    \item If the number of $+1$ points does not equal the number of $-1$
    points in $T-T_i$, then $\F_1(T-T_i) = \mass(T - T_i -\boundary
    S_i) + \mass(S_i) \geq 2$, and in this case, the sum of the flat
    norms (over all $i$) is at least $6$, and we are done. Hence we
    assume we have matching numbers of $+1$ and $-1$ points in $T-T_i$
    for all $i$.
    \item If the number of $+1$ points equals the number of $-1$ points in
    $T-T_i$, then $\F_1(T-T_i) = \mass(T - T_i -\boundary S_i) +
    \mass(S_i) \geq \mass(R_i)$ for any $S_i$ that spans $T-T_i$,
    i.e., with $\boundary S_i = T-T_i$.
  \item If there are two or more $i$ where the optimal $S_i$ given by
    the flat norm decomposition does not span $T - T_i$, then the sum
    of the flat norms is at last $4$, and we are done.  Hence we
    assume at least two of the $i$ have optimal $S_i$ that span $T -
    T_i$. Without loss of generality, assume that $S_1$ and $S_2$ span
    $T - T_1$ and $T - T_2$
  \item Then we get 
    \begin{eqnarray*}
     \F_{1}(T - T_1) &+& \F_{1}(T - T_2) \\
             &=& \mass(T-T_1 -\boundary S_1) + \mass(S_1) + \mass(T -
    T_1 -\boundary S_1) + \mass(S_1)\\ 
                                     &\geq& \mass(R_1) + \mass(R_2).      
    \end{eqnarray*}

  \item We claim $R_1 \cup R_2$ ``spans'' $T_1$ and $T_2$ in the sense
    that there is a path in $R_1 \cup R_2$ from $\supp(T_1)$ to
    $\supp(T_2)$.  If this result holds, we are done because the
    distance between the point supports of $T_1$ and $T_2$ exceeds
    $4$.
  \item To see that this claim image that the line segments that make
    up $R_1$ and $R_2$ are colored red and blue, respectively.
  \item Notice that we allow the case in which these line segments have
    length equal to zero, which happens when $T_1$ and or $T_2$
    coincide with a point of $T$ of the opposite orientation.
  \item Imagine drawing both $R_1$ and $R_2$ at the same time, with
    the different colors.
  \item Now begin at $T_1$ and move along the red edge to an element
    of $T$. Now move along the blue edge that must end on that element
    of $T$ to another node in $(T-T_1) \cup (T-T_2)$. This node will
    not be $T_1$. we keep moving from node to node until we end on
    $T_2$. See Figure~\ref{fig:zero-current-graph-1}
    \begin{figure}[htp!]
      \centering
     \scalebox{1.0}{
     \input{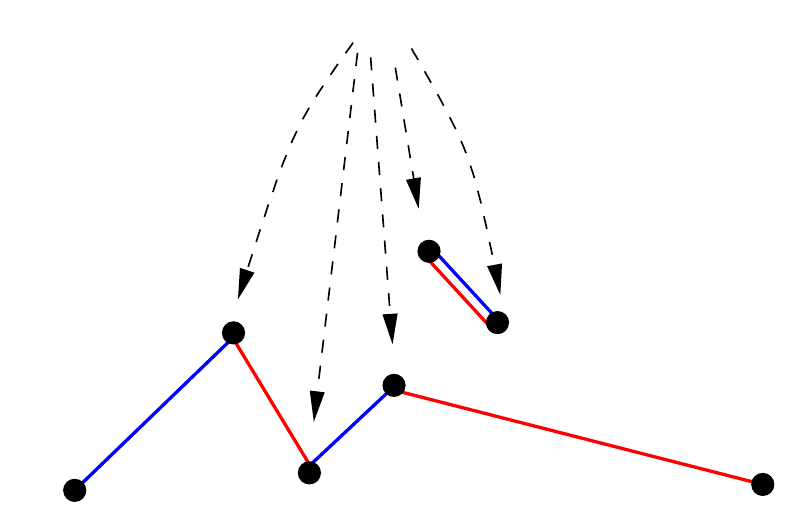_t}}      
      \caption{$R_1 \cup R_2$ contains a path from $T_1$ to $T_2$}
      \label{fig:zero-current-graph-1}
    \end{figure}
  \item Once we leave a node in this path, we never return since to do
    so would mean that three edges end on that node. Since there is
    only one other node with degree $1$ (in the graph theoretic sense),
    $T_2$, the path must end there.
  \item Notice that the argument works even if one of the beginning red
    or ending blue (or both) shrink to a length of zero, i.e. if nodes
    in $T$ coincide with $T_1$ or $T_2$ or both.
  \item This completes the proof.
    \end{enumerate}
  \end{enumerate}
\vspace*{-0.3in}
\end{proof}

\begin{rem}
\label{rem:0-current}
The above example shows that for particular input
  $0$-currents $T_1$, $T_2$ and $T_3$, the unique unregularized
median is the trivial $0$-current.  If we regularize the
objective function of the median (as in \cref{eq:defmsregmdn}), then
we still get the trivial $0$-current as the unique median for
the same $3$ input currents.  This result follows from the fact
  that the regularized functional still equals $3$ when evaluated on
the trivial $0$-current, and it always increases in value for
all other nontrivial $T$.
\end{rem}



\section{Shared Boundaries: Co-dimension 1 Results}
\label{sec:shared1}

\subsection{Point of View and Definitions}
\label{sec:definitions}

As we have just seen, the median need not be non-trivial for every
collection of integral currents as inputs.  Therefore, we now restrict
ourselves to input currents $\{T_i\}_{i=1}^\nCur$ which share
(non-empty) boundaries, and we seek medians over all currents $T$ that
share the same boundary.  This set up guarantees that $T-T_i$ is a
boundary for each $i$, and that there is a $\lambda$ small enough such
that the implicit minimization in each of the flat norm distances
$\F_{\lambda}(T-T_i)$ yields a minimal surface $S_{i,\lambda}$.  This
result follows from the intuitive observation that when $\lambda$ is
small enough, it is cheaper to ``fill in'' a boundary than pay for its
length (see Lemma 4.1 in our previous paper \cite{IbKrVi2016}). This
result could be understood fr follows from Thus we are left with the
problem of choosing a $T$ such that the sum of the volumes of the
minimal surfaces $S_{i,\lambda}$ (bound by $T-T_i$) is minimal.  Under
this setting, we obtain the particularly nice result of finding a
median $\mdn{T}$ such that the corresponding collection of minimal
surfaces $\{S_{i,\lambda}\}_{i=1}^\nCur$ is a stationary (under
Lipschitz maps) varifold with boundary $\{T_i\}_{i=1}^\nCur$.

In this section, we restrict our attention to the case in which all
the input currents $T_i$ are codimension-$1$ currents
($\dimcur$-dimensional currents in $\R^\dimsp$ for $\dimsp=\dimcur+1$)
that are themselves pieces of boundaries of multiplicity-$1$
$(\dimcur+1)$-dimensional currents.  Additionally, $\partial T_i
= \partial T_j$ for all $i$ and $j$, i.e., all the input currents have
the same, shared boundary.

\subsubsection{Definitions}
\label{ssec:envelope}

We begin by recalling the definition of top dimensional currents and
then define a special class of integral currents
(\cref{def-recCurrent,def-intCurrent}) we will use in this section.

\begin{defn}[Integral $(\dimcur+1)$-currents in
  $\R^{\dimcur+1}$] \label{def-codim1Current} Suppose $E \subset
  \R^{\dimcur+1}$ and $\mathcal{L}^{\dimcur+1}(E) < \infty$. We define
  the $(\dimcur+1)$-current $[[E]]$ to be the current $[[E]](\omega) =
  \int_E \omega(\vec{x}) d\mathcal{L}^{\dimcur+1}$ where
  $\vec{x}$ is the standard orienting $(\dimcur+1)$-vector in
  $\R^{\dimcur+1}$.  If we have a multiplicity function
  $\eta:\R^{\dimcur+1}\rightarrow\Z$, we define $\eta\myell[[E]]$ to
  be the current $\eta\myell[[E]](\omega) = \int_{E}
  \eta(x)\omega(\vec{x}) d\mathcal{L}^{\dimcur+1}$.  If $\mass(\partial
  \eta\myell[[E]]) <\infty$, then $\eta\myell[[E]]$ is a
  $(\dimcur+1)$-dimensional integral current.
\end{defn}

\begin{defn}[Sets of Finite Perimeter]
  $E\subset\R^{\dimcur+1}$ is a set of finite perimeter if $[[E]]$ is
  an integral current, i.e., if $\mass(\partial [[E]]) < \infty$.
\end{defn}

In section~\ref{sec:epsilon-envelope} we will use the reduced boundary. We need the idea of \emph{Approximate Normal}.
\begin{defn}[Approximate Normal]
  A set $E\subset\R^d$ is said to have an \emph{apprximate (outward) normal} $\vec{n}_x$, at a point $x\in\partial E$ if:
\[      \lim_{r\rightarrow 0} \frac{\mathcal{L}^d(B(x,r)\cap E\cap \{y\; | \; (y-x)\cdot \vec{n}_x>0 \})}{\alpha(d)r^d} \rightarrow 0 \]
and
\[      \lim_{r\rightarrow 0} \frac{\mathcal{L}^d(B(x,r)\cap E^c\cap \{y\; | \; (y-x)\cdot \vec{n}_x<0 \})}{\alpha(d)r^d} \rightarrow 0 \]
\end{defn}
\begin{defn}[Reduced Boundary]
  If $E\subset\R^d$ is a set of finite perimeter, then its reduced
  bounary $\partial^* E$ is the set of points $x\in\partial E$ where
  the approximate normals exist. 
\end{defn}

\begin{rem}
  The reduced boundary of $E$ and approximate normals are a part of
  the theory of sets of finite perimeter. These points are the points
  where, as we zoom in, except for a set with density $0$, $E$ looks
  like a half-space. The defining hyperplane of the half space is the
  measure-theoretic tangent plane of the set. See Chapter 5 of Evans
  and Gariepy~\cite{evans-1992-1} for all the details.
\end{rem}

\begin{defn}[$\Eu$] \label{def-E_U} Let $E\subset\R^{\dimcur+1}$ be a
  set of finite perimeter and $U\subset\R^{\dimcur+1}$ be a
  bounded open set such that $\mass(\partial(\partial [[E]] \myell U))<
  \infty$.  We
  define $\Eu\subset\mathcal{I}^{\dimcur}$ to be the collection of all
  integral $\dimcur$-currents $S$ such that
  \begin{enumerate}
    \item $S = \partial [[F]] \myell U$ for some set of finite perimeter $F$, and
    \item For some open $U'$ compactly supported in $U$,
      $U'\subset\subset U$, we have $E\setminus U' = F\setminus U'$.
  \end{enumerate}
  Note that this implies that $\partial(\partial [[F]] \myell U)
  = \partial(\partial [[E]] \myell U)$.
  See \cref{fig-Eu} for an illustration.
\end{defn}

\begin{figure}[htp!]
  \centering
  \includegraphics[scale=0.2]{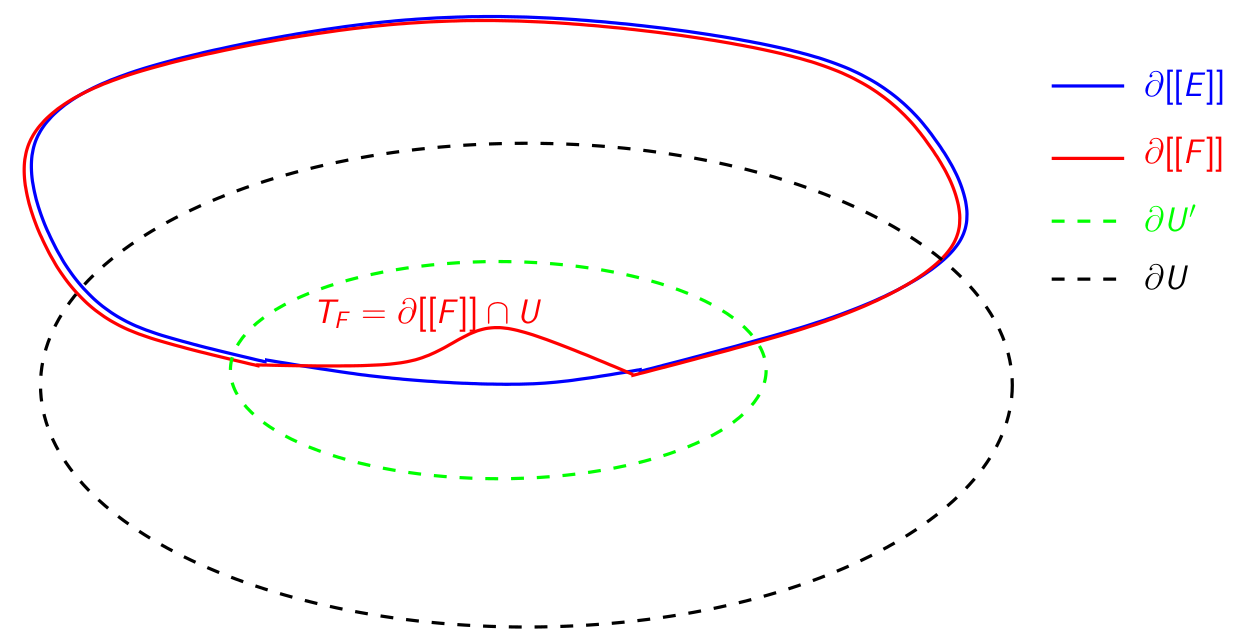}
  \caption{The reason for set $E_U$ is to guarantee there exists a cubical cover of the difference $[[F]] -[[E]]$ such that it is supported in some $U'\subset \subset U$, then we can apply compactness theorem. }
  \label{fig-Eu}
\end{figure}

\begin{rem}[Shared Boundaries]
  We say that a set of currents in $\{T_i\}_{i=1}^{\nCur}\subset\Eu$
  have shared boundaries when $\partial T_i = \partial T_j$ for all
  $i\neq j$. By design, every subset of currents in $\Eu$ has shared
  boundaries.
\end{rem}

\begin{defn}[Precise Representative of $f$. \cite{evans-1992-1}]\label{preciserepresentative}
Assume $f\in L_{loc}^1(\R^n)$, then 
\begin{align*}
f^*(x) = \left\{\begin{array}{ll}
\lim\limits_{r\rightarrow 0} \frac{1}{\alpha(n)r^n} \int_{B(x,r)} f(y) dy, &\quad \mbox{if this limit exist}\\
0, &\quad otherwise. 
\end{array}\right.
\end{align*}
\end{defn}

\begin{defn}[Precise Representative of a set $E$]\label{preciserepresentativeset}
Let $E\in \R^\dimsp$ be a bounded set with finite perimeter, and $f = \chi_E$. Define 
\begin{align*}
E^* = \{x| f^*(x) = 1\},
\end{align*}
to be the precise representative $E$. 
\end{defn}

\begin{rem}
Since Hausdorff measure is a Radon measure, by Lebesgue Besicovitch differentiation theorem, the limit defined in \ref{preciserepresentative} exists almost everywhere, i.e. $\Hd^\dimsp(E^*- E) = 0.$ Compared to $E$, $E^*$ removed the subset from $E$ that cannot be seen under measure $\Hd^\dimsp.$
\end{rem}

\begin{defn}[Envelope] \label{def-env} The envelope $\Env$ of a set of
  integral currents with shared boundaries,
  $\{T_i\}_{i=1}^{\nCur}\subset \Eu$, is defined as the union of $E^*_{i,j}$, $i < j$, such that
  $\partial\left(m\myell[[E_{i,j}]]\right) = T_i - T_j$, where
  $|m(x)|=1$ for all $x\in E$. $\Env$ is the
  union of all the precise representatives of regions that lie between any two of the input
  currents.
\end{defn}

\begin{rem}[Compact support]
  We note that for any finite collection of currents in $\Eu$,
  $\{T_i\}_{i=1}^{\nCur}\subset\Eu$, we have that $\Env\subset\subset U$. Moreover, $\partial [[E^*_{ij}]] = \partial [[E]]$ as $\Hd^\dimsp(E^*- E) = 0.$
\end{rem}

\paragraph{Subclass we will minimize over:}
\label{sec:class}

In this section, we always work with $p$-currents in $\Eu$, and in
particular, with sets of input currents
$\{T_i\}_{i=1}^{\nCur}\subset\Eu$.  We will
also assume that $\lambda$ is always small enough that the flat norm
decomposition implicit in $\F_\lambda(T-T_i)$ chooses an $S$ such that
$T-T_i = \partial S$.  Under this setting, we specialize the median
functional (introduced in \cref{eq:defmdn}) to the following one:
\begin{defn}[Median] \label{def-mdncd1shrdbdy}
  Let $\{T_i\}_{i=1}^\nCur \subset \Eu$.
  Then the median $\mdn{T}_\lambda$ is defined to be 
  \begin{align*}
    \mdn{T}_{\lambda} =\argmin_{T\in\Eu} \sum_{i=1}^\nCur \mathbb{F}_\lambda (T-T_i).
  \end{align*}
\end{defn}

\begin{rem}[ $\mdn{T}_\lambda\in \Eu$]
  We need to prove that the integral current we get in the
  existence theorem is in fact also in $\Eu$, but we will get this
  fairly easily using the compactness theorem for sets of finite
  perimeter.
\end{rem}

\subsubsection{Outline of the section}
\label{sec:outline-shared}

We begin by showing that the difference current between the
support of the median and the support of any input current, is a
subset of the envelope we defined above.  That is, if $T_i
= \partial[[E_i]]\myell U$ and $\hat{T} = \partial[[\hat{E}]]\myell U$
then $[[E_i]] - [[\hat{E}]]$ is supported in $\Env$.  Then,
using the deformation theorem, we show that medians exist. This turns
out to be a non-trivial result because there can indeed be minimizing
sequences with unbounded mass.  Next we demonstrate that for the case
we are considering in this section---the codimension $1$ case---nice,
smooth input currents can generate families of medians, all of which
are non-smooth.  Finally, we study the case of the mass-regularized
median (as defined in \cref{eq:defmsregmdn}), and show that the
difference set for this median lives in an $\epsilon$-neighborhood of
the envelope of the input currents and that $\epsilon \rightarrow 0$
as $\mu/ \lambda \rightarrow 0$.

\subsection{Medians Are In The Envelope}
\label{sec:in-envelope}

\begin{thm}[Medians are in the envelope] \label{thm:medianinenvelope}
  Let $\{T_i\}_{i=1}^\nCur \subset \mathcal{E}_U$.
  The support any median, $\mdn{T}_\lambda$, satisfies $\supp(\mdn{T}_\lambda - T_i)\subset Closure(\Env)$ and $$\mdn{T}_\lambda\myell \Closure(\Env)^c = T_i \myell \Closure(\Env)^c\ \forall i.$$.
\end{thm}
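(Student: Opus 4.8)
The plan is to show that if some candidate median $\mdn T_\lambda$ had support (relative to an input $T_i$) sticking outside $\Closure(\Env)$, we could construct a competitor with strictly smaller median functional value — contradicting minimality. Write $T_i = \partial[[E_i]]\myell U$ and the candidate median as $\hat T = \partial[[\hat E]]\myell U$, both in $\Eu$. The difference $[[\hat E]] - [[E_i]]$ is (up to the precise representative) an integral $(\dimcur+1)$-current carried by a set $D_i \subset U$, and by definition of $\F_\lambda$ with $\lambda$ small enough, $\F_\lambda(\hat T - T_i) = \mass(\partial([[D_i]])\myell U) $ — actually more carefully $\F_\lambda(\hat T-T_i)$ equals $\lambda\,\mass$ of an optimal filling plus the residual mass; since $\hat T - T_i = \partial([[D_i]]\myell U)$ is already a boundary, for small $\lambda$ the optimal $S$ is $[[D_i]]\myell U$ itself, so $\F_\lambda(\hat T - T_i) = \lambda\,\mass([[D_i]]) = \lambda\,\mathcal L^{\dimcur+1}(D_i)$ (I am folding the multiscale constant into the statement; the precise bookkeeping is routine given Lemma 4.1 of \cite{IbKrVi2016}).

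The key construction: let $W = \R^{\dimcur+1}\setminus \Closure(\Env)$. I claim that on each connected component of $W$, the sets $E_i$ all agree (as precise representatives) — this is essentially the definition of $\Env$: $\Env$ is the union of the symmetric-difference regions $E^*_{i,j}$ between pairs of inputs, so outside its closure, $E_i^* = E_j^*$ for all $i,j$. Now define a new set $\hat E' $ by modifying $\hat E$ only on $W$: on each component $C$ of $W$, replace $\hat E \cap C$ by the common value $E_i\cap C$ (which is either $C$ or $\emptyset$). Set $\hat T' = \partial[[\hat E']]\myell U$. Then $\hat T' \in \Eu$ (it still shares the boundary, since we changed nothing near $\partial U$ — here one needs $\Env \subset\subset U$, which is recorded in the excerpt). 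The difference $\hat E' \,\triangle\, E_i$ is now contained in $\Closure(\Env)$, hence $\mathcal L^{\dimcur+1}(\hat E' \triangle E_i) \le \mathcal L^{\dimcur+1}(\hat E \triangle E_i)$ with strict inequality if $\hat E$ genuinely differed from the common value on some positive-measure piece of $W$. Summing over $i$, the median functional strictly decreases, contradiction; therefore $\hat E \triangle E_i \subset \Closure(\Env)$ up to measure zero for every $i$, which is exactly $\supp(\mdn T_\lambda - T_i) \subset \Closure(\Env)$ and $\mdn T_\lambda \myell \Closure(\Env)^c = T_i \myell \Closure(\Env)^c$.

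I would carry the steps out in this order: (1) fix notation, write each current as $\partial[[\cdot]]\myell U$, and invoke Lemma 4.1 of \cite{IbKrVi2016} to identify $\F_\lambda(\hat T - T_i)$ with $\lambda$ times the volume of the between-set $D_i = \hat E \triangle E_i$ for $\lambda$ small; (2) prove the "outside the envelope all inputs agree" statement directly from \cref{def-env} and \cref{preciserepresentativeset}; (3) perform the surgery on $W$'s components and check $\hat T' \in \Eu$ — here the compact-support remark ($\Env\subset\subset U$) is what makes the boundary condition survive, and one must check $\hat E'$ still has finite perimeter, which follows since we are gluing along $\partial^*(\Closure(\Env))$ which has finite $\Hd^\dimcur$ measure; (4) compare functionals and conclude.

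The main obstacle I expect is step (3): verifying that the surgered current $\hat T'$ genuinely lies in $\Eu$ — i.e., that cutting $\hat E$ and pasting the input's value along the (possibly irregular) boundary of $\Closure(\Env)$ yields a set of finite perimeter with the correct shared boundary and with the modification compactly contained in some $U'\subset\subset U$. This is where the reduced-boundary/precise-representative machinery set up earlier in the section has to be used carefully, rather than the naive set-theoretic picture; one likely wants to intersect with the reduced boundary $\partial^*\Env$ and argue that $\mass(\partial[[\hat E']]\myell U) < \infty$ via the fact that perimeter is subadditive under such measurable gluings. The functional-comparison step (4) is then essentially immediate, and step (2) is a definitional unwinding.
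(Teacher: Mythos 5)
Your proposal is correct and follows essentially the same route as the paper: outside the envelope all inputs agree, so one replaces the candidate median there by the common input value and observes that each spanning volume drops by the same positive amount, contradicting minimality. The paper phrases the surgery as subtracting $\partial[[S]]$ where $S = S_i\myell\Env^c$ is the common part of the spanning currents outside the envelope, which is exactly your set-level modification of $\hat E$ on $\Closure(\Env)^c$ in the language of the fillings $[[\hat E \triangle E_i]]$.
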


\begin{proof}
  It is obvious that $\mdn{T}_\lambda\myell \Closure(\Env)^c = T_i \myell \Closure(\Env)^c$ for all $i$ since all $T_i$'s agree outside $\Env^c.$ Now by way of contradiction, suppose $\supp(\mdn{T}_\lambda - T_i) \nsubseteq \Closure(\Env)$, then the Hausdorff distance between $\supp(\mdn{T}_\lambda - T_i)$ and $\Closure(\Env)$ is positive, i.e. $d_H(\mdn{T}_\lambda - T_i, \Env)=c>0$ for any $i$.  For any $i$, define $[[S_i]]$ to be the unique bounded integral current that spans $\mdn{T}_\lambda - T_i$.
  Note that because $\mdn{T}_\lambda - T_i$ is codimension $1$ and bounded, it divides the space into two components, one of which is bounded and the other unbounded.
  The bounded component is the unique minimal current spanning $\mdn{T}_\lambda - T_i$.
  In other words, $\partial [[S_i]] = \mdn{T}_\lambda - T_i$ and 
\begin{align*}
\partial ([[S_i]] - [[S_j]]) = \partial [[S_i]] - \partial [[S_j]] = T_j - T_i. 
\end{align*}
\noindent This implies $[[S_i]] - [[S_j]]$ spans $T_i - T_j$. Recall that in the definition of $\Env$, $\partial [[E^*_{ij}]] = T_i - T_j\subset \Env$. This tells us $[[S_i]]$ and $[[S_j]]$ agree outside $\Env$ almost everywhere, i.e. 
\begin{align*}
\Hd^{\dimcur+1}((S_i\backslash \Env) \triangle (S_j\backslash \Env)) = 0,\ \forall i,j.
\end{align*}
Define 
\begin{align*}
S'_i &= S_i \myell \Env,\\
S &= S_i\myell \Env^c,
\end{align*}
\noindent where the orientation of $[[S'_i]]$ and $[[S]]$ are induced by $[[S_i]]$. Notice that even though it is possible for $S_i\myell \Env^c \neq S_j\myell \Env^c$ on a set of $\Hd^{\dimcur+1}-$measure 0 for $i\neq j$, $[[S]]$ as a current for any $i,j$ will be the same. Define the new median to be $\mdn{T}'_\lambda = \mdn{T}_\lambda - \partial [[S]].$

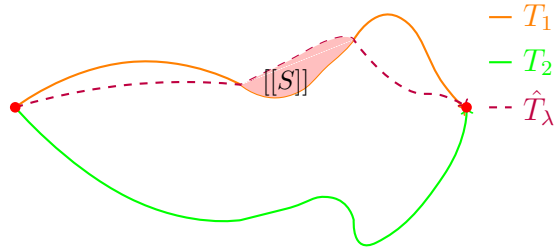
\begin{figure}[H]
\begin{center}
\begin{tikzpicture}
\def \r{3}
\draw[orange, thick, ->] plot [smooth, tension=1, ->] coordinates {(0,0) (0.5*\r,0.2*\r) (1*\r,0.1*\r)} -- plot [smooth, tension=1] coordinates {(1*\r,0.1*\r) (1.2*\r, 0.05*\r) (1.4*\r, 0.2*\r) (1.5*\r, 0.3*\r)} -- plot [smooth, tension=1] coordinates {(1.5*\r, 0.3*\r) (1.7*\r, 0.4*\r) (1.9*\r, 0.1*\r) (2*\r,0)};
\draw[green, thick, ->] plot [smooth, tension=1] coordinates {(0,0) (0.5*\r,-0.4*\r) (1*\r,-0.5*\r)} -- plot [smooth, tension=1] coordinates {(1*\r,-0.5*\r) (1.2*\r, -0.45*\r) (1.4*\r, -0.4*\r) (1.5*\r, -0.5*\r)} -- plot [smooth, tension=1] coordinates {(1.5*\r, -0.5*\r) (1.6*\r, -0.6*\r) (1.9*\r, -0.3*\r) (2*\r,0)};
\draw[purple, thick, dashed, ->] plot [smooth, tension=1] coordinates {(0,0) (0.5*\r,0.1*\r) (1*\r,0.1*\r)} -- plot [smooth, tension=1] coordinates {(1*\r,0.1*\r) (1.2*\r, 0.2*\r) (1.4*\r, 0.3*\r) (1.5*\r, 0.3*\r)} -- plot [smooth, tension=1] coordinates {(1.5*\r, 0.3*\r) (1.7*\r, 0.1*\r) (1.9*\r, 0.05*\r) (2*\r,0)};
\draw[thick,orange] (2.1*\r, 0.4*\r) -- (2.2*\r, 0.4*\r)node[right]{$T_1$};
\draw[thick,green] (2.1*\r, 0.2*\r) -- (2.2*\r, 0.2*\r)node[right]{$T_2$};
\draw[thick,purple, dashed] (2.1*\r, 0*\r) -- (2.2*\r, 0*\r)node[right]{$\hat{T}_\lambda$};
\fill[red] (0,0) circle[radius=2pt];
\fill[red] (2*\r,0) circle[radius=2pt];
\fill[pink] plot [smooth, tension=1] coordinates {(1*\r,0.1*\r) (1.2*\r, 0.05*\r) (1.4*\r, 0.2*\r) (1.5*\r, 0.3*\r)} -- plot [smooth, tension=1] coordinates {(1*\r,0.1*\r) (1.2*\r, 0.2*\r) (1.4*\r, 0.3*\r) (1.5*\r, 0.3*\r)};
\node at (1.2*\r,0.11*\r) {\footnotesize{$[[S]]$}};
\end{tikzpicture}
\caption{The region outside the envelope is invariant with respect to input currents}
\end{center}
\end{figure}

\begin{figure}[H]
\begin{center}
\begin{tikzpicture}
\def \r{3}
\fill[pink] plot [smooth, tension=1] coordinates {(1*\r,0.1*\r) (1.2*\r, 0.05*\r) (1.4*\r, 0.2*\r) (1.5*\r, 0.3*\r)} -- plot [smooth, tension=1] coordinates {(1*\r,0.1*\r) (1.2*\r, 0.2*\r) (1.4*\r, 0.3*\r) (1.5*\r, 0.3*\r)};
\draw[orange, thick, ->] plot [smooth, tension=1, ->] coordinates {(0,0) (0.5*\r,0.2*\r) (1*\r,0.1*\r)} -- plot [smooth, tension=1] coordinates {(1*\r,0.1*\r) (1.2*\r, 0.05*\r) (1.4*\r, 0.2*\r) (1.5*\r, 0.3*\r)} -- plot [smooth, tension=1] coordinates {(1.5*\r, 0.3*\r) (1.7*\r, 0.4*\r) (1.9*\r, 0.1*\r) (2*\r,0)};
\draw[green, thick, ->] plot [smooth, tension=1] coordinates {(0,0) (0.5*\r,-0.4*\r) (1*\r,-0.5*\r)} -- plot [smooth, tension=1] coordinates {(1*\r,-0.5*\r) (1.2*\r, -0.45*\r) (1.4*\r, -0.4*\r) (1.5*\r, -0.5*\r)} -- plot [smooth, tension=1] coordinates {(1.5*\r, -0.5*\r) (1.6*\r, -0.6*\r) (1.9*\r, -0.3*\r) (2*\r,0)};
\draw[purple, thick, dashed, ->] plot [smooth, tension=1] coordinates {(0,0) (0.5*\r,0.1*\r) (1*\r,0.1*\r)} -- plot [smooth, tension=1] coordinates {(1*\r,0.1*\r) (1.2*\r, 0.2*\r) (1.4*\r, 0.3*\r) (1.5*\r, 0.3*\r)} -- plot [smooth, tension=1] coordinates {(1.5*\r, 0.3*\r) (1.7*\r, 0.1*\r) (1.9*\r, 0.05*\r) (2*\r,0)};
\draw[thick,orange] (2.1*\r, 0.4*\r) -- (2.2*\r, 0.4*\r)node[right]{$T_1$};
\draw[thick,green] (2.1*\r, 0.2*\r) -- (2.2*\r, 0.2*\r)node[right]{$T_2$};
\draw[thick,purple, dashed] (2.1*\r, 0*\r) -- (2.2*\r, 0*\r)node[right]{$\hat{T}_\lambda$};
\draw[thick, dashed] (2.1*\r, -0.2*\r) -- (2.2*\r, -0.2*\r)node[right]{$\hat{T}'_\lambda$};
\fill[red] (0,0) circle[radius=2pt];
\fill[red] (2*\r,0) circle[radius=2pt];
\node at (1.2*\r,0.11*\r) {\footnotesize{$[[S]]$}};
\draw[thick, dashed] plot [smooth, tension=1] coordinates {(0,0) (0.5*\r,0.1*\r) (1*\r,0.1*\r)} -- plot [smooth, tension=1] coordinates {(1*\r,0.1*\r) (1.2*\r, 0.05*\r) (1.4*\r, 0.2*\r) (1.5*\r, 0.3*\r)} -- plot [smooth, tension=1] coordinates {(1.5*\r, 0.3*\r) (1.7*\r, 0.1*\r) (1.9*\r, 0.05*\r) (2*\r,0)};
\end{tikzpicture}
\caption{Project $\hat{T}_\lambda$ out of the envelope back to the boundary of the envelope.}
\end{center}
\end{figure}
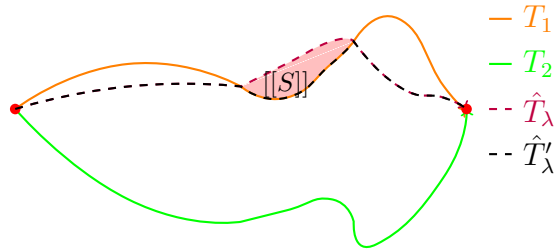

Then 
\begin{align*}
\F_\lambda(\mdn{T}_\lambda - T_i) &= \lambda \mass([[S_i]])\\
\F_\lambda(\mdn{T}'_\lambda - T_i) &= \lambda \mass([[S'_i]]),
\end{align*}

\noindent and $\mass([[S_i]]) - \mass([[S'_i]]) = \mass([[S]]) \geq 0$ for each $i$. Therefore 
\begin{align*}
\sum_{i=1}^\nCur \F_\lambda(\mdn{T}_\lambda - T_i) - \F_\lambda(\mdn{T}'_\lambda - T_i) = \nCur \lambda \mass([[S]]) > 0,
\end{align*}

\noindent which contradicts the fact of $\mdn{T}_\lambda$ being the median. So $\supp(\mdn{T}_\lambda - T_i) \subset \Closure(\Env).$

\end{proof}

\subsection{Medians Exist}
\label{sec:exist}

\begin{thm}[Medians exists] \label{thm-mdnexist}
  Let $\{T_i\}_{i=1}^\nCur \subset \Eu$, where $\Eu$ is specified in \cref{def-E_U}.
  Then $\mdn{T}_\lambda$ exists, and $\mdn{T}_\lambda \in \Eu$.
\end{thm}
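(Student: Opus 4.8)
The plan is to run the direct method of the calculus of variations, exactly as in the mass-regularized existence theorem, but with the extra work of (a) getting a uniform mass bound despite the absence of a regularization term, and (b) verifying that the limit current still lies in $\Eu$. First I would take a minimizing sequence $\{P_j\} \subset \Eu$ for the functional $T \mapsto \sum_{i=1}^\nCur \F_\lambda(T - T_i)$. Since $\lambda$ is chosen small enough that each flat-norm decomposition of $P_j - T_i$ picks a spanning current, we have $\F_\lambda(P_j - T_i) = \lambda\,\mass([[S_i^j]])$ where $\partial [[S_i^j]] = P_j - T_i$ and $[[S_i^j]]$ is the (unique) bounded region it bounds. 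The key point is that \cref{thm:medianinenvelope} and its proof technique apply to \emph{every term of the minimizing sequence}, not just to a minimizer: if $\supp(P_j - T_i) \not\subset \Closure(\Env)$ we can project the excess back to the boundary of the envelope, strictly decreasing the objective, so without loss of generality we may assume every $P_j$ satisfies $\supp(P_j - T_i) \subset \Closure(\Env)$ and $P_j \myell \Closure(\Env)^c = T_i \myell \Closure(\Env)^c$. Combined with $\Env \subset\subset U$, this confines all the $P_j$ (and all the spanning $S_i^j$) to a fixed compact set.

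Next I would extract the uniform mass bound. Because $S_i^j$ is the bounded region cut off by $P_j - T_i$ and this region sits inside the fixed bounded set $\Closure(\Env)$, we get $\mass([[S_i^j]]) \le \mathcal{L}^{\dimcur+1}(\Closure(\Env)) =: C_0 < \infty$ uniformly in $i,j$; hence $\sum_i \F_\lambda(P_j - T_i) = \lambda \sum_i \mass([[S_i^j]]) \le \nCur \lambda C_0$, so the infimum is finite. For the mass of $P_j$ itself: write $P_j = (P_j - T_1) + T_1 = \partial[[S_1^j]] + T_1$, so $\mass(P_j) \le \mass(\partial[[S_1^j]]) + \mass(T_1)$. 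The term $\mass(\partial[[S_1^j]]) = \mass(P_j - T_1)$ is not yet controlled, so instead I would bound $\mass(P_j)$ by appealing to the structure of $\Eu$: $P_j = \partial[[F_j]]\myell U$ with $F_j$ a set of finite perimeter agreeing with $E$ outside a fixed $U' \subset\subset U$, and the perimeter of $F_j$ inside $U$ is bounded because $F_j$ differs from $E$ by the bounded region $S_1^j \subset \Closure(\Env)$, whose own boundary $P_j - T_1$ has mass at most $\mass(T_1) + \text{Per}(S_1^j; U)$ — here I would invoke the isoperimetric/compactness theory for sets of finite perimeter (Evans--Gariepy, Ch.~5) to say that sets of finite perimeter contained in a fixed bounded set, with a uniform bound on $\mathcal{L}^{\dimcur+1}$, have a subsequence converging in $L^1$ with perimeter lower semicontinuous. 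Applying this compactness theorem for sets of finite perimeter to $\{F_j\}$ gives a limit set $\hat F$ of finite perimeter, with $\hat F = E$ outside $U'$; set $\mdn{T}_\lambda := \partial[[\hat F]] \myell U$, which is therefore in $\Eu$ by construction, and $[[F_j]] \to [[\hat F]]$ in the flat norm (mass/$L^1$ convergence of the top-dimensional currents implies flat convergence of their boundaries after restriction).

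Finally I would check that $\mdn{T}_\lambda$ achieves the infimum. Since $P_j = \partial[[F_j]]\myell U \to \partial[[\hat F]]\myell U = \mdn{T}_\lambda$ in the flat norm, and the flat norm is continuous under flat convergence, $\F_\lambda(P_j - T_i) \to \F_\lambda(\mdn{T}_\lambda - T_i)$ for each $i$ (lower semicontinuity suffices, and that is all we need): $\sum_i \F_\lambda(\mdn{T}_\lambda - T_i) \le \liminf_j \sum_i \F_\lambda(P_j - T_i) = \inf$. Hence $\mdn{T}_\lambda$ is a minimizer and lies in $\Eu$. The main obstacle I anticipate is the second paragraph — obtaining the uniform mass/perimeter bound on $P_j$ without a regularization term; the envelope confinement from \cref{thm:medianinenvelope} does the heavy lifting (it bounds the \emph{volumes} $S_i^j$ for free), but translating that into a bound on $\text{Per}(F_j; U)$, equivalently on $\mass(P_j)$, requires the isoperimetric-type estimate for finite-perimeter sets trapped in a bounded region, and one must be careful that the "projection back to $\partial(\Env)$" step used to enforce the envelope constraint does not itself inflate the perimeter. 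Once that bound is in hand, the compactness theorem for sets of finite perimeter and lower semicontinuity of the flat norm finish the argument routinely.
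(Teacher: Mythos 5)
There is a genuine gap, and it sits exactly where you flagged your ``main obstacle'': the uniform bound on $\mass(P_j)$, equivalently on $\operatorname{Per}(F_j;U)$. Your envelope-confinement step is fine and does bound the \emph{volumes} $\mathcal{L}^{\dimcur+1}(S_i^j)\le\mathcal{L}^{\dimcur+1}(\Closure(\Env))$, but no isoperimetric-type estimate converts a volume bound plus confinement to a bounded region into a perimeter bound: the isoperimetric inequality controls volume \emph{by} perimeter, never the reverse, and a set of finite perimeter trapped in a fixed ball with fixed volume can have arbitrarily large perimeter (arbitrarily wiggly boundary). The paper makes this failure explicit --- in the codimension-$1$ shared-boundary setting, minimizing sequences (indeed, actual medians, cf.\ the random-walk example in \cref{sec:wild}) can have divergent mass. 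Your intermediate bound is also circular as written: $\mass(P_j-T_1)=\operatorname{Per}(S_1^j;U)$ is precisely the quantity you are trying to control. Without the uniform perimeter bound, the compactness theorem for sets of finite perimeter does not apply, and the whole second and third paragraphs of your plan collapse.

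The missing idea, which is the heart of the paper's proof, is to \emph{modify} the minimizing sequence so that it has uniformly bounded mass while not increasing the objective in the limit. The paper does this with the deformation theorem: it pushes the inputs $T_i$ and each $\mdn{T}_{\lambda,j}$ onto a cubical grid of mesh $2\epsilon_s$ refining down to $\Closure(\Env)$, then adjusts the pushed median, component by component of the partition induced by the pushed inputs, so that it becomes a union of pieces of the pushed $T_i^{\pi}(\epsilon_s)$ (each adjustment only decreases the sum of flat norms). This yields $\mass(\mdn{T}^{adj}_{\lambda,j}(\epsilon_s))\le\sum_i(\gamma+1)\mass(T_i)$ with $\gamma$ the deformation-theorem constant, after which compactness, a diagonal argument, and the triangle inequality (with the grid errors $\epsilon_s\gamma\mass(\cdot)\to 0$) finish the proof and show the limit stays in $\Eu$. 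Your first paragraph (envelope confinement of the minimizing sequence) and your last paragraph (lower semicontinuity) are compatible with this, but some surgery on the sequence of the above kind is indispensable; confinement alone cannot deliver it.
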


\begin{proof}
The proof will be divided into the following steps:

\begin{enumerate}
\item \emph{Construct a sequence of cubical grids, $\{\grid\{\epsilon_s\}\}_{s=1}^\infty$, with side length $2\epsilon_s$ for each cube,  such that 
\begin{align*}
\Env \subset \grid(\epsilon_s) \subset U,
\end{align*}
where $\epsilon_s \rightarrow 0$ as $s\rightarrow \infty$ and $\bigcap_{s=1}^\infty \grid(\epsilon_s) = \Closure(\Env).$}\\

Since there are finite number of input currents, there exists a $U'\subset \subset U$ such that the difference of $T_i$'s only occurs in $U'$. Let $R = \hdist(U',U)$, where $\hdist$ is the Hausdorff distance. Define a sequence of cubical grid with side length $2\epsilon_s<R$, denoted as $\{grid(\epsilon_s)\}_{s=1}^\infty$, such that 
\begin{align*}
\Env \subset \grid(\epsilon_s) \subset U.
\end{align*}
Moreover each cube in $\grid(\epsilon_s)$ has nonempty intersection with $\Env$. Therefore
\begin{align*}
\lim\limits_{s\rightarrow \infty} \grid(\epsilon_s) = \bigcap_{s=1}^\infty \grid(\epsilon_s) = \Env. 
\end{align*}

By the definition of $\Env$, the differences between input currents lie within $\Env$, i.e., 
$$T_i\myell \Env^c = T_j\myell \Env^c, \forall i,j.$$ 

And $\Env \subset \grid(\epsilon_s)$, so $T_i$'s also agrees outside $\grid(\epsilon_s)$ for all $s$.

\item \emph{Push each $T_i$ to $\grid(\epsilon_s)$.}  \\

 Since all $T_i$'s agrees outside the $\Env$ and $\Env\subset \grid(\epsilon_s)$, we only need to push $T_i\myell \grid(\epsilon_s)$ to the grid.
 Hence we do not have to decide how $\partial T_i$ gets pushed.

By the deformation theorem \cite[Theorem 5.1]{morgan-2008-geometric}, each $T_i\myell \grid(\epsilon_s)$ can be decomposed into 
\begin{align*}
T_i\myell \grid(\epsilon_s) = T^{\pi}_i(\epsilon_s)\myell \grid(\epsilon_s) + \partial S^{\pi}_i(\epsilon_s)
\end{align*}
where $T^{\pi}_i(\epsilon_s)\myell \grid(\epsilon_s) \in \mathcal{P}_\dimcur \R^{\dimcur+1}$, the space of polyhedral $\dimcur$-currents in $\R^{\dimcur+1}$, and $S^{\pi}_i(\epsilon_s)\in \mathcal{I}_{\dimcur+1}\R^{\dimcur+1}$, the space of integral $(\dimcur+1)$-currents in $\R^{\dimcur+1}$.
In addition, 

\begin{align*}
\mass(T_i^{\pi}(\epsilon_k)\myell \grid(\epsilon_s)) &\leq \gamma \mass(T_i\myell \grid(\epsilon_s)),\\
T^{\pi}_i(\epsilon_s)\myell \grid^c(\epsilon_s) &= T_i\myell \grid^c(\epsilon_s),
\end{align*}
where $\gamma = 2(p+1)^{2p+2}$.

Define 

\begin{align*}
T^{\pi}_i(\epsilon_s) = T^{\pi}_i(\epsilon_s)\myell \grid(\epsilon_s) + T_i\myell \grid^c(\epsilon_s).
\end{align*}

As a consequence,  
\begin{align}
\begin{aligned}
\mass(T_i^{\pi}(\epsilon_s)) &= \mass(T^{\pi}_i\myell \grid(\epsilon_s)) + \mass(T^{\pi}_i\myell \grid^c(\epsilon_s))\\
&\leq \gamma \mass(T_i\myell \grid(\epsilon_s)) + \mass(T_i\myell \grid^c(\epsilon_s))\\
&\leq (\gamma + 1) \mass(T_i),~~~\mbox{ and }\\
\end{aligned}\\
\begin{aligned}
\F_\lambda (T_i - T_i^{\pi}(\epsilon_s))&= \F_\lambda(T_i \myell \grid(\epsilon_s) - T^{\pi}_i(\epsilon_s))\myell \grid(\epsilon_s)) \\
&= \F_\lambda (\partial S_i^{\pi}(\epsilon_s))\\
&\leq \epsilon_s\gamma \mass(T_i\myell \grid(\epsilon_s))\\
& \leq \epsilon_s\gamma \mass(T_i) \,. \label{pushedInputDiff}
\end{aligned}
\end{align}


\item \emph{Construct pushed minimizing sequence for medians.} \\

Let $\{\mdn{T}_{\lambda,j}\}\subset \Eu$ be a minimizing sequence for the median objective function. Since all $T_i$'s agree outside $\grid(\epsilon_s)$, we can restrict $\{\mdn{T}_{\lambda,j}\}$ to satisfy
$$\mdn{T}_{\lambda,j}\myell \grid^c(\epsilon_s) = T_i\myell \grid^c(\epsilon_s),\ \forall i,j.$$

Next we first push each $\mdn{T}_{\lambda,j}$ to $grid(\epsilon_s)$, denoted as $\mdn{T}^{\pi}_{\lambda,j}(\epsilon_s)\myell \grid(\epsilon_s)$ and then extend it to $U$ as
\begin{align*}
\mdn{T}^{\pi}_{\lambda,j}(\epsilon_s) = \mdn{T}^{\pi}_{\lambda,j}(\epsilon_s)\myell \grid(\epsilon_s) + T_i\myell \grid^c(\epsilon_s).
\end{align*}

Note 
\begin{align*}
\mdn{T}_{\lambda,j}\myell \grid^c(\epsilon_s) = \mdn{T}^{\pi}_{\lambda,j}(\epsilon_s)\myell \grid^c(\epsilon_s) = T_i\myell \grid^c(\epsilon_s) = T_i^{\pi}(\epsilon_s)\myell \grid^c(\epsilon_s).
\end{align*}

In particular, we will pick $\epsilon_s = \frac{1}{2^s w_s}$, where $\omega_s = \mass^\dimcur(\mdn{T}_{\lambda,j}).$

\item \emph{Modify $\mdn{T}^{\pi}_{\lambda, j}(\epsilon_s)$ to $\mdn{T}^{adj}_{\lambda,j}$.} \label{step-modify}\\

After pushing everything to the grid, we can treat all $\{T_i^{\pi}(\epsilon_s)\}$ and $\{\mdn{T}^{\pi}_{\lambda,j}(\epsilon_s)\}$ as the boundaries of sets $\{E^{\pi}_i(\epsilon_s) \cap U\}$ and $\{\mdn{E}^{\pi}_{\lambda, j}(\epsilon_s) \cap U\}$, and the flat norm between $T^{\pi}_i(\epsilon_s)$ and $\mdn{T}^{\pi}_{\lambda,j}(\epsilon_s)$ is 
\begin{align}\label{pushedMedianDiff}
\begin{aligned}
\F_\lambda (\mdn{T}^{\pi}_{\lambda,j}(\epsilon_s) - T^{\pi}_i(\epsilon_s)) &= \Hd^{\dimcur +1}((\mdn{E}^{\pi}_{\lambda, j}(\epsilon_s)\cap U) \triangle (E^{\pi}_i(\epsilon_s)\cap U))\\
&= \Hd^{\dimcur+1}(\mbox{union of cubes in $(
\mdn{E}^{\pi}_{\lambda, j}(\epsilon_s)\cap U) \triangle (E^{\pi}_i(\epsilon_s)\cap U)$})\\
&= (2\epsilon_s)^{\dimcur + 1} (\mbox{union of cubes in $(
\mdn{E}^{\pi}_{\lambda, j}(\epsilon_s)\cap U) \triangle (E^{\pi}_i(\epsilon_s)\cap U)$}).
\end{aligned}
\end{align}

For each $\mdn{T}^{\pi}_{\lambda,j}(\epsilon_s)$, we can modify $\mdn{T}^{\pi}_{\lambda,j}(\epsilon_s)$ by adding cubes $\mathcal{C}$ to $\mdn{E}^{\pi}_{\lambda, j}(\epsilon_s)$ or subtracting cubes $\mathcal{C}$ from $\mdn{E}^{\pi}_{\lambda, j}(\epsilon_s)$ and replace the old $\mdn{T}^{\pi}_{\lambda,j}(\epsilon_s)$ with $\mdn{T}^{\pi}_{\lambda, j} + \sum_{C\in \mathcal{C}} \partial C$, denoted as $\mdn{T}^{adj}_{\lambda,j}(\epsilon_s)$, until it is the union of pieces from $\{T^{\pi}_i(\epsilon_s)\}$. 

Now in more detail: the intersections of the $E^{\pi}_i(\epsilon_s) \cap \grid(\epsilon_s)$ partition $\grid(\epsilon_s)$ into a finite number of components that sometimes share boundaries. For each component $\Comp_l(\epsilon_s)$, 

\begin{enumerate}
\item If $\Comp_l(\epsilon_s)\cap \mdn{E}^{\pi}_{\lambda, j}(\epsilon_s) = \emptyset$, do nothing;
\item If $\Comp_l(\epsilon_s)\cap \mdn{E}^{\pi}_{\lambda, j}(\epsilon_s) \neq \emptyset$, we will update $\mdn{T}^{\pi}_{\lambda,j}(\epsilon_s)$ in the following way: 

Define
\begin{itemize}
\item $\Comp_l(\epsilon_s) \cap \mdn{E}_{\lambda, j}(\epsilon_s) = F_l(\epsilon_s)$, 
\item $\Comp_l(\epsilon_s) \cap \mdn{E}^c_{\lambda, j}(\epsilon_s) = K_l(\epsilon_s).$
\end{itemize}
Note that $F_l(\epsilon_s)\cup K_l(\epsilon_s) = \Comp_l(\epsilon_s)$ and either $E^{\pi}_i(\epsilon_s) \cap \Comp_l(\epsilon_s) = \emptyset$ or $\# E^{\pi}_i(\epsilon_s) \cap \Comp_l(\epsilon_s) = \# \Comp_l(\epsilon_s)$. The second condition means if $\Comp_l(\epsilon_s)$ contains one of the cubes from $E^{\pi}_i(\epsilon_s)$, then all the cubes in $\Comp_l(\epsilon_s)$ are contained $E^{\pi}_i(\epsilon_s)$. As a result, 
\begin{align*}
\mdn{E}^{\pi}_{\lambda,j} \triangle E^{\pi}_i(\epsilon_s) = F_l(\epsilon_s)\ or\ K_l(\epsilon_s).
\end{align*}

Now for each cube $C$ in $F_l(\epsilon_s)$ or $K_l(\epsilon_s)$, denote 
\begin{enumerate}
\item $N_C^{F_l(\epsilon_s)} = \# \{E^{\pi}_i(\epsilon)| C\in E^{\pi}_i(\epsilon)\}$ if $C\in F_l(\epsilon_s)$,
\item $N_C^{K_l(\epsilon_s)} = \# \{E^{\pi}_i(\epsilon)| C\in E^{\pi}_i(\epsilon)\}$ if $C\in K_l(\epsilon_s)$.
\end{enumerate} 

There are two cases:
\begin{enumerate}
\item If $\sum_{C\in F_l(\epsilon_s)}N_C^{F_l(\epsilon_s)}\geq \sum_{C\in K_l(\epsilon_s)}N_C^{K_l(\epsilon_s)}$, then subtracting $\Comp_l(\epsilon_s)$ will decrease the sum of flat norms between $\mdn{T}_{\lambda, j}(\epsilon_s)$ and $T^{\pi}_i(\epsilon_s)$'s by $(2\epsilon_s)^{\dimcur+1}\sum_{C\in F_l(\epsilon_s)}N_C^{F_l(\epsilon_s)}$ and increase the sum by $(2\epsilon_s)^{\dimcur+1}\sum_{C\in K_l(\epsilon_s)}N_C^{K_l(\epsilon_s)}$. Therefore, the sum of flat norms will decrease by $(2\epsilon_s)^{\dimcur+1}(\sum_{C\in F_l(\epsilon_s)} N_C^{F_l(\epsilon_s)} - \sum_{C\in K_l(\epsilon_s)}N_C^{K_l(\epsilon_s)})$. So $\mdn{E}^{adj}_{\lambda, j}(\epsilon_s) = \mdn{E}^{\pi}_{\lambda, j}(\epsilon_s)\backslash \Comp_l(\epsilon_s)$ and $\mdn{T}^{adj}_{\lambda, j}(\epsilon_s) = \mdn{E}_{\lambda, j}(\epsilon_s) - \sum_{C\in \mdn{E}_{\lambda, j}(\epsilon_s)\backslash \Comp_l(\epsilon_s)}$ $\partial C$,
\item If $\sum_{C\in F_l(\epsilon_s)} N_C^{F_l(\epsilon_s)}< \sum_{C\in K_l(\epsilon_s)}N_C^{K_l(\epsilon_s)}$, then adding $\Comp_l(\epsilon_s)$ will decrease the sum of flat norms between $\mdn{T}_{\lambda, j}(\epsilon_s)$ and $T^{\pi}_i(\epsilon_s)$'s by $(2\epsilon_s)^{\dimcur+1}\sum_{C\in K_l(\epsilon_s)}N_C^{K_l(\epsilon_s)}$ and increase the sum by $(2\epsilon_s)^{\dimcur+1}\sum_{C\in F_l(\epsilon_s)}N_C^{F_l(\epsilon_s)}$. Therefore, the sum of flat norms will decrease by $(2\epsilon_s)^{\dimcur+1}(\sum_{C\in K_l(\epsilon_s)}N_C^{K_l(\epsilon_s)} - \sum_{C\in F_l(\epsilon_s)}N_C^{F_l(\epsilon_s)})$. So $\mdn{E}^{adj}_{\lambda, j}(\epsilon_s)=\mdn{E}^{\pi}_{\lambda, j}(\epsilon_s)\cup \Comp_l(\epsilon_s)$ and $\mdn{T}^{adj}_{\lambda, j}(\epsilon_s) = \mdn{E}^{\pi}_{\lambda, j}(\epsilon_s) + \sum_{C\in \mdn{E}_{\lambda, j}(\epsilon_s)\backslash \Comp_l(\epsilon_s)} \partial C$.
\end{enumerate}
The process will end in finite steps since there are only finite number of $\Comp_l(\epsilon_s)$'s. And when it finishes, $\mdn{E}_{\lambda, j}(\epsilon_s)\myell \grid(\epsilon_s)$ will be the union of pieces from $T^{\pi}_i(\epsilon_s)\myell \grid(\epsilon_s)$.
\end{enumerate}

\begin{figure}[H]
  \begin{center}
    \scalebox{0.3}{
      \input{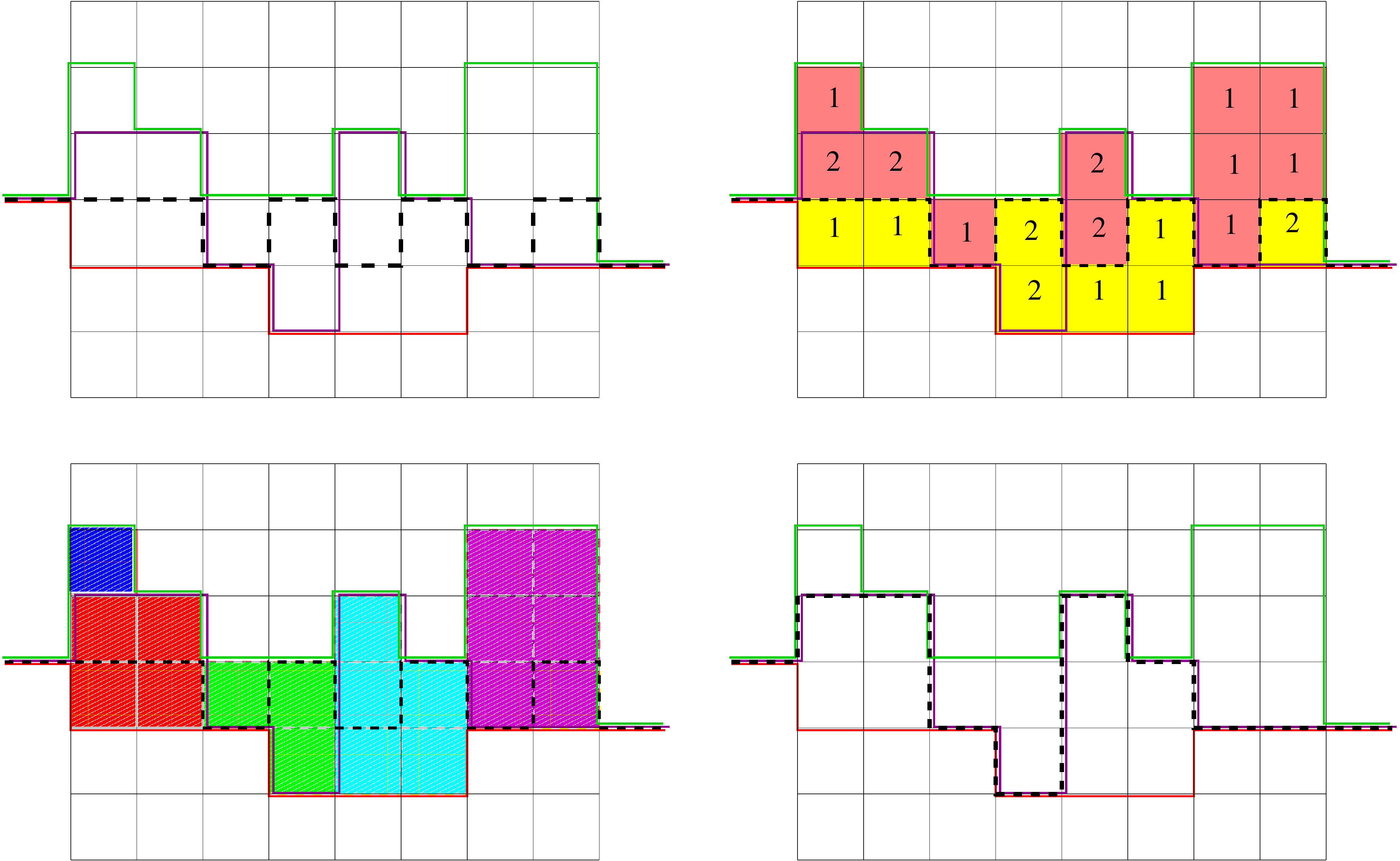_t}}\\
    \caption{An example in the case of $\dimcur+1 =2$ of how to adjust the pushed median. }
    \label{fig:pushedMedian}
  \end{center}
\end{figure}

In the top-left picture of \cref{fig:pushedMedian}, there are 3 pushed input currents represented as solid green, red and purple lines. The black dashed line is the original pushed median $\mdn{T}_{\lambda,j}(\epsilon_s)$. In the top-right picture, pink regions represent the regions outside $\mdn{E}^{\pi}_{\lambda, j}(\epsilon_s)$ while yellow regions are the opposite. The number in each cube $C$ equals $\# \{E^{\pi}_i(\epsilon)| C\in E^{\pi}_i(\epsilon)\}$. In the bottom-left picture, different color represents different connected components. For the blue component, it does not intersects with $\mdn{E}^{\pi}_{\lambda,j}(\epsilon_s)$, we leave it alone. For the red component, $\sum_{C\in F_l(\epsilon_s)}N_C^{F_l(\epsilon_s)}< \sum_{C\in K_l(\epsilon_s)}N_C^{K_l(\epsilon_s)}$, so we added the entire yellow component to  $\mdn{E}^{\pi}_{\lambda, j}(\epsilon_s)$. For the green component, $\sum_{C\in F_l(\epsilon_s)}N_C^{F_l(\epsilon_s)}\geq \sum_{C\in K_l(\epsilon_s)}N_C^{K_l(\epsilon_s)}$, so we subtract the green component from $\mdn{E}^{\pi}_{\lambda, j}(\epsilon_s)$. We can continue the same process to cyan and purple components. In bottom-right picture, the black dashed line is the updated pushed median.

\item \emph{$\mass(\mdn{T}^{adj}_{\lambda,j}(\epsilon_s))$ is bounded uniformly.}\\

Each $\mdn{T}^{adj}_{\lambda,j}(\epsilon_s)\myell \grid(\epsilon_s)$ is the union of pieces from $T_i^{\pi}(\epsilon_s)\myell \grid(\epsilon_s)$ and $\mdn{T}^{adj}_{\lambda,j}(\epsilon_s)\myell \grid^c(\epsilon_s) = T_i^{\pi}(\epsilon_s)\myell \grid^c(\epsilon_s)$, so
\begin{align*}
\mass(\mdn{T}^{adj}_{\lambda,j}(\epsilon_s)) &\leq \sum_{i=1}^\nCur \mass(T_i^{\pi}(\epsilon_s))\leq \sum_{i=1}^\nCur (\gamma + 1) \mass(T_i),
\end{align*}

and $\mdn{T}^{adj}_{\lambda}(\epsilon_s)\in \Eu$. $\{\mdn{T}^{adj}_{\lambda,j}(\epsilon_s)\} \subset U.$

\item \emph{Apply triangle inequality and prove that $\mdn{T}^{adj}_{\lambda}(\epsilon_s)$ converges to the median $\mdn{T}_\lambda$ as $s\rightarrow \infty$.} \\

By diagonal argument, the sequence $\{\mdn{T}^{adj}_{\lambda, s}(\epsilon_s)\}$ converges to some $\mdn{T}_{\lambda}.$ 

Note that 
\begin{align}
\F_\lambda (\mdn{T}^{\pi}_{\lambda,s} - \mdn{T}_{\lambda,s}) &\leq \epsilon_s \gamma \mass(\mdn{T}_{\lambda,s}) \leq \frac{\gamma}{2^s}, \label{eq-TriIneq1} \\
\sum_{i=1}^\nCur \F_\lambda (\mdn{T}^{adj}_{\lambda,s}(\epsilon_s) - T_i) &\leq \sum_{i=1}^\nCur \F_\lambda (\mdn{T}^{\pi}_{\lambda,s}(\epsilon_s) - T_i). \label{eq-TriIneq2} 
\end{align}

This inequality follows from the actions described in Step \ref{step-modify} for the construction of $\mdn{T}^{adj}_{\lambda,s}(\epsilon_s)$, where the adjustment process decreases the sum of flat norms between all $T_i$'s.

Using the triangle inequality with the bounds in \cref{eq-TriIneq1,eq-TriIneq2} we get

\begin{align*}
\lim_{s\rightarrow \infty}\sum_{i=1}^\nCur \F_\lambda (\mdn{T}_{\lambda,s} - T_i) &\leq \sum_{i=1}^\nCur \F_\lambda (\mdn{T}_\lambda - T_i) \\
&\leq \lim_{s\rightarrow \infty}\sum_{i=1}^\nCur \F_\lambda (\mdn{T}^{adj}_{\lambda,s}(\epsilon_s) - T_i) \\
&\leq \lim_{s\rightarrow \infty}\sum_{i=1}^\nCur [(\F_\lambda (\mdn{T}^{adj}_{\lambda,s}(\epsilon_s) - T^{\pi}_i(\epsilon_s)) + (\F_\lambda (T^{\pi}_i(\epsilon_s) - T_i))]\\
&\leq \lim_{s\rightarrow \infty}\sum_{i=1}^\nCur [(\F_\lambda (\mdn{T}^{\pi}_{\lambda,s}(\epsilon_s) - T^{\pi}_i(\epsilon_s)) + (\F_\lambda (T^{\pi}_i(\epsilon_s) - T_i))]\\
&\leq \lim_{s\rightarrow \infty}\sum_{i=1}^\nCur [\F_\lambda(\mdn{T}^{\pi}_{\lambda,s}(\epsilon_s) - \mdn{T}_{\lambda,s})+ \F_\lambda(\mdn{T}_{\lambda,s} - T_i) + 2(\F_\lambda (T^{\pi}_i(\epsilon_s) - T_i))]\\
&\leq \lim_{s\rightarrow \infty} \left[\sum_{i=1}^\nCur \F_\lambda (\mdn{T}_{\lambda,s} - T_i) + \frac{\gamma\nCur}{2^s} + \epsilon_s \gamma \sum_{i=1}^\nCur \mass(T_i)\right]\\
&=\lim_{s\rightarrow \infty}\sum_{i=1}^\nCur \F_\lambda (\mdn{T}_{\lambda,s} - T_i) \, .
\end{align*}

\noindent Therefore $\mdn{T}_\lambda$ is a median and by step 5, $\mass(\mdn{T}_\lambda) \leq \sum_{i=1}^\nCur (\gamma+1) \mass(T_i)$ and $\mdn{T}_\lambda\in \Eu$.
\end{enumerate}
\end{proof}

\subsection{Medians Can Be Wild}
\label{sec:wild}

As we proved in \cref{sec:exist}, the median $\mdn{T}_\lambda$ for
$\{T_i\}_{i=1}^\nCur \subset \mathcal{E}_U$ always exists with a mass
bounded by $\sum_{i=1}^\nCur (\gamma+1) \mass(T_i)$.  However, it is not
guaranteed that all the medians are bounded.  In fact, there exist
sequences of medians whose masses diverge.  For example, take two
input currents $T_1$ and $T_2$ to be the upper and lower half of the
boundary of a rectangle. The median $\mdn{T}_\lambda$ can be any non
self-intersecting curve of finite length inside the square. We can,
for example use any graph that represents a random walk in the vertical
direction vesus time, represented by the horizontal axis, under the
constriant that the walk must stay in the rectangle. Of course the
lengths (i.e. mass) of these random walks are not bounded since the
speed of the walk (the slope of the graph) is not bounded.

\begin{figure}[H]
\begin{center}
\includegraphics[scale=0.5]{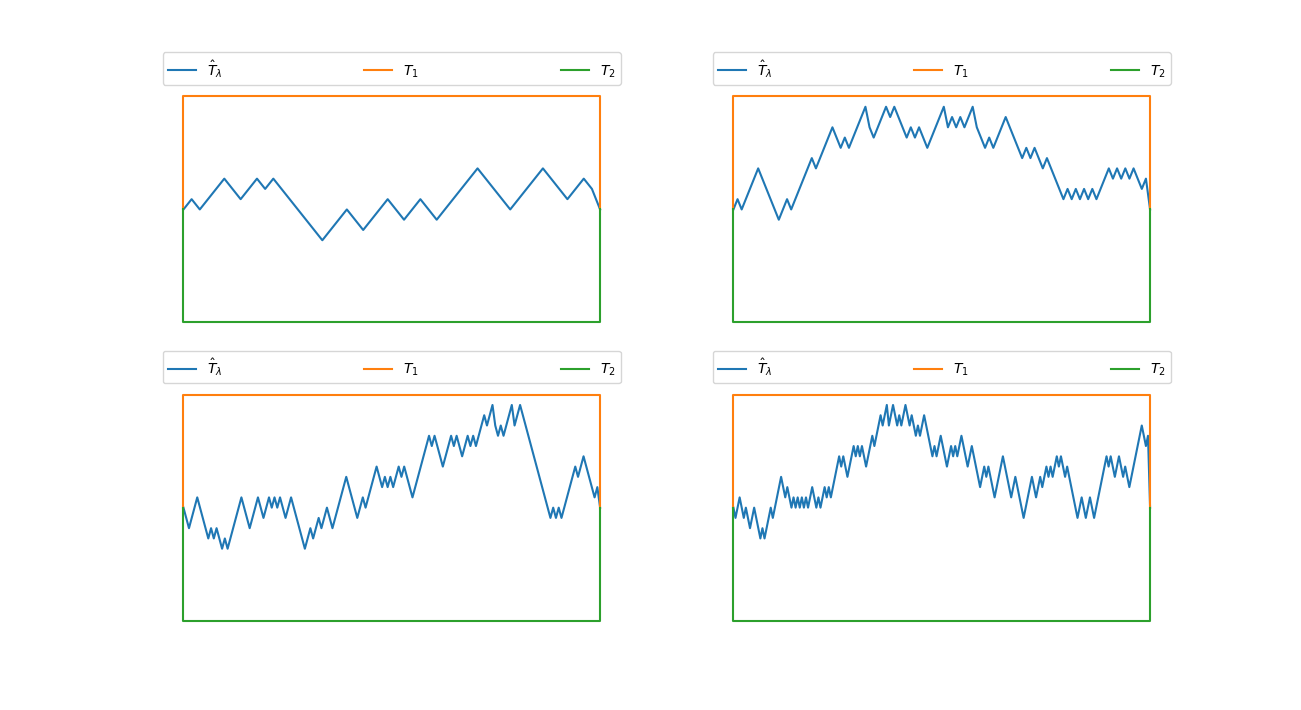}
\caption{Random walk medians can have arbitrarily high mass.}
\end{center}
\end{figure}

\subsection{Smooth Inputs Can Generate Non-smooth Medians}

Even if the input currents are regular, the median need not be
regular.  We present an example in $\R^2$ showing the median can fail
to be regular.  We will be looking for medians which are pieces of
boundaries of sets, as we did in the proof of existence for the
codimension-$1$ shared boundary case above.

\begin{thm}(Regularity of inputs does not imply regularity of
  median) \label{thm-nonregmdn} Suppose that each of the $T_i$'s are
  smooth, with shared boundaries, and that we minimize over $T$ that
  are pieces of boundaries of sets.  Then the entire set of medians
  might consist only of currents that lack smoothness somewhere.
\end{thm}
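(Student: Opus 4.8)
The plan is to produce an explicit family of three \emph{smooth} arcs in $\R^2$ sharing a common pair of endpoints, and to show that its unique median is the graph of a pointwise median of three smooth functions, which is forced to have a corner. First I would fix a large open ball $U\subset\R^2$, two points $\va=(0,0)$ and $\vb=(1,0)$, and smooth functions $f_1,f_2,f_3:[0,1]\to\R$ with $f_k(0)=f_k(1)=0$, extended outside $[0,1]\times\{0\}$ by one common fixed smooth arc from $\vb$ back to $\va$, so that the resulting currents $T_1,T_2,T_3$ are smooth, satisfy $\partial T_1=\partial T_2=\partial T_3$, and agree outside a set $U'\subset\subset U$. Then $T_k=\partial[[E_k]]\myell U$, where $E_k$ is the (finite-perimeter) subgraph of $f_k$ intersected with $U$, and all $E_k$ agree outside $U'$, so $\{T_k\}\subset\Eu$.

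Second, I would reduce the median problem to a pointwise minimization. Since $\lambda$ is small enough that the flat-norm decomposition of $T-T_k$ picks a filling $S_k$ with $\partial S_k=T-T_k$ (the standing assumption of this section; cf.\ Lemma~4.1 of \cite{IbKrVi2016}), any competitor $T=\partial[[E]]\myell U$ in $\Eu$ satisfies $\F_\lambda(T-T_k)=\lambda\,\mass([[E]]-[[E_k]])=\lambda\,\mathcal L^2(E\triangle E_k)$. Writing $\mathcal L^2(E\triangle E_k)=\int_U|\chi_E-\chi_{E_k}|\,d\mathcal L^2$, the median objective becomes $\lambda\int_U\big(\sum_{k=1}^3|\chi_E(\vx)-\chi_{E_k}(\vx)|\big)\,d\mathcal L^2(\vx)$, which decouples over $\vx$. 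For fixed $\vx$ the integrand is minimized, and since the count is odd it is minimized \emph{uniquely}, by the majority vote $\chi_{E^*}(\vx)=\1[\,\#\{k:\vx\in E_k\}\ge 2\,]$. Hence the median is the unique current $\mdn T_\lambda=\partial[[E^*]]\myell U$, and because each $E_k$ is a subgraph, $E^*$ is exactly the subgraph of the pointwise median $g=\operatorname{med}(f_1,f_2,f_3)$; thus $\supp(\mdn T_\lambda)\cap U$ is the graph of $g$.

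Third, I would choose the $f_k$ so that $g$ is not smooth. It suffices to arrange an interior point $x_0\in(0,1)$ at which two of the functions, say $f_2$ and $f_3$, cross transversally with $f_2'(x_0)\ne f_3'(x_0)$, while $f_1$ is strictly the largest of the three on a neighborhood of $x_0$; then near $x_0$ one has $g=\max(f_2,f_3)$, which has an honest corner at $x_0$ (one-sided slopes $f_2'(x_0)$ and $f_3'(x_0)$). A concrete choice such as $f_1(x)=2\sin\pi x$, $f_2(x)=\tfrac1{10}\sin(\pi x)(x-\tfrac13)$, $f_3(x)=-\tfrac1{10}\sin(\pi x)(x-\tfrac13)$ works, with $x_0=\tfrac13$: there $f_1=\sqrt3>0=f_2=f_3$, the crossing is transversal, and $g$ jumps slope from $-\tfrac{\sqrt3}{20}$ to $+\tfrac{\sqrt3}{20}$ at $x_0$. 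Since the median is unique and its support contains this corner, the entire set of medians is the single current $\partial[[E^*]]\myell U$, which lacks smoothness at $x_0$; this proves the theorem.

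The step I expect to be the main obstacle is making the reduction of the second paragraph fully rigorous: one must verify that minimizing over \emph{all} integral currents that are pieces of boundaries of sets (not just over graphs) genuinely collapses to the majority-vote set, which requires confirming that $E^*\in\Eu$ (it is, being the subgraph of a Lipschitz function agreeing with all $E_k$ outside $U$, hence of finite perimeter with the correct boundary) and that the smallness hypothesis on $\lambda$ can be imposed uniformly over the competitors one actually needs --- this follows by restricting attention, via \cref{thm:medianinenvelope} and \cref{thm-mdnexist}, to competitors supported in $\Closure(\Env)$ with mass at most $\sum_k(\gamma+1)\mass(T_k)$. The transversality bookkeeping showing $g$ has a genuine corner is routine once the configuration is fixed.
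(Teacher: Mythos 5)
Your proposal is correct, but it reaches the conclusion by a genuinely different route than the paper. The paper proves a ``Graphs are Good'' lemma by slicing $\mdn{T}$, the $T_i$, and the fillings $S_i$ with vertical lines $h_x$, running a combinatorial argument about the signed intersection points and the intervals they generate to show that any piecewise-smooth median must meet the one-dimensional median interval of $\{h_x\cap T_i\}$ on almost every slice; non-smoothness then follows from exhibiting inputs whose \emph{median interval envelope} has a kink. You instead exploit the codimension-one structure directly: writing competitors as $T=\partial[[F]]\myell U$ and using that the top-dimensional filling of $T-T_k$ is unique, the objective becomes $\lambda\sum_k\mathcal{L}^2(F\triangle E_k)=\lambda\int_U\sum_k|\chi_F-\chi_{E_k}|$, which decouples over area elements and is uniquely minimized (for an odd number of inputs) by the majority-vote set $E^*$, i.e.\ the subgraph of the pointwise median of the $f_k$. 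Your argument buys strictly more in this setting --- uniqueness of the median and an explicit closed form for it --- and replaces the paper's path-counting with a one-line pointwise minimization; the paper's slicing lemma, on the other hand, yields the median-interval-envelope constraint for competitors that need not come from a majority-vote structure and is the form reused elsewhere in the paper. Two points worth tightening in your write-up: (i) uniqueness of the majority vote genuinely needs $\nCur$ odd, which your three-curve example satisfies; and (ii) the uniform small-$\lambda$ issue you flag at the end can be settled cleanly without mass bounds --- for any integral $2$-current $W$ supported in a ball $B(\vx_0,R)\supset U$, the divergence theorem applied to $\vx-\vx_0$ on the level sets of the multiplicity gives $\mass(W)\le \tfrac{R}{2}\mass(\partial W)$, so any $\lambda<2/R$ forces full filling for \emph{every} competitor in $\Eu$, which is all your lower bound requires.
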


\begin{proof}
  Consider the case of the input $T_i$'s being oriented graphs of
  smooth functions $f_i$, where the \{set, orienting vector field\} pairs are given by:
  \[ \left(\{(x,f_i(x))| x\in [0,1]\},\frac{(-1,-f_i'(x))}{\sqrt{1 +
        (f_i'(x))^2}}\right)\] and each $f_i$ satisfies $f_i(0) = 1 =
  f_i(1)$. An example is shown in Figure~\ref{fig:non-smooth-median}.

  The next lemma is more than we actually need, but is included
  because part of its proof anticipates a later proof.
  \begin{lemma}(Graphs are Good) When the $T_i$'s are one-dimensional
    graphs in $\R^2$ sharing the same two boundary points, the infimum
    of the median objective functional over piece-wise smooth
    non-graphs is not smaller than the infimum over graphs.  Hence the
    infimum in the median problem can be restricted to graphs.
    \label{lem:graphs-good}
  \end{lemma}

  \begin{figure}[htp!]
    \centering
    \scalebox{0.5}{
    \input{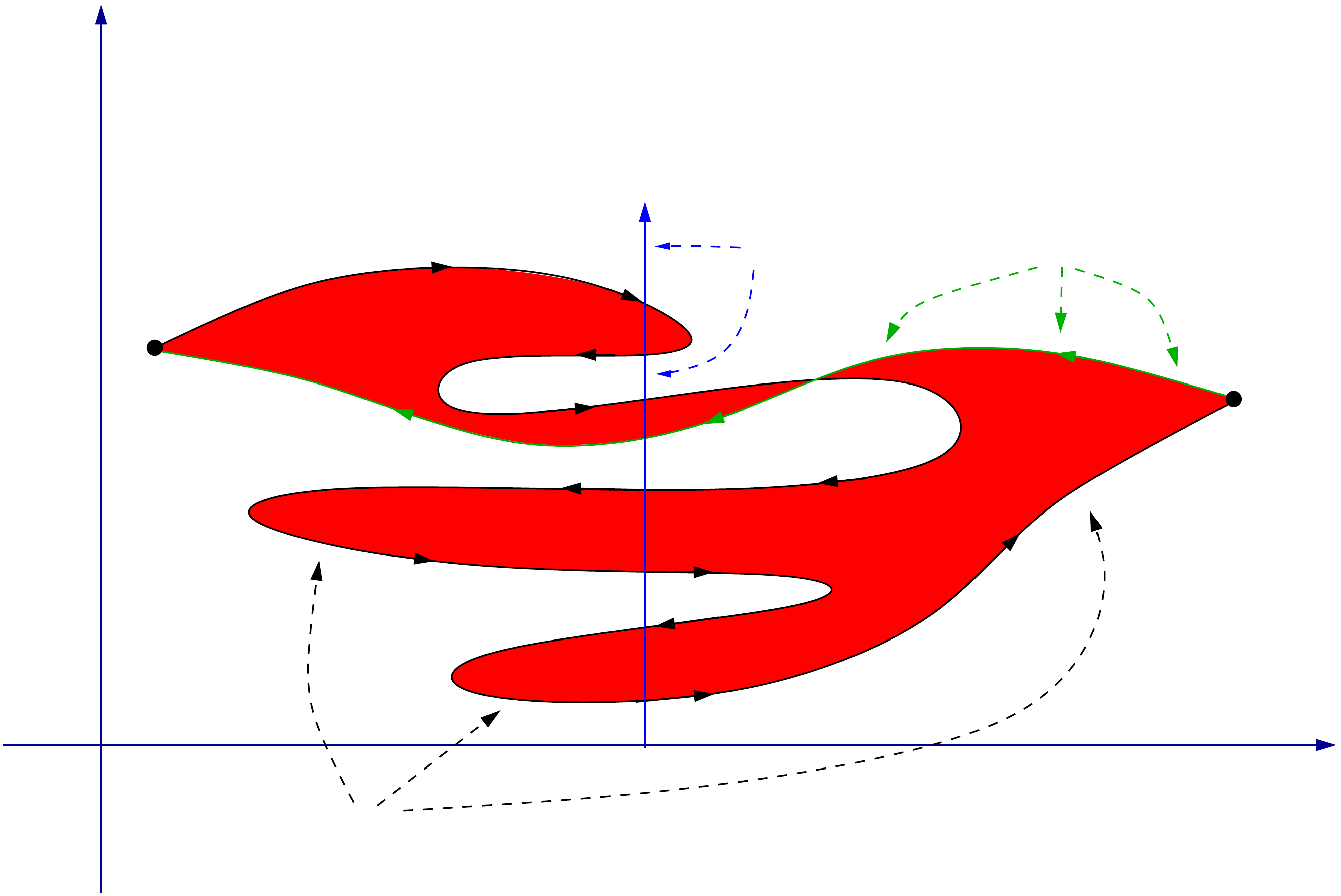_t}}
    \caption{Slicing $T_i$, $\mdn{T}$ and $S_i$ with a vertical line, $h_x$.}
    \label{fig:slicing-h-x}
  \end{figure}

  \begin{figure}[htp!]
    \centering
    \scalebox{0.7}{
    \input{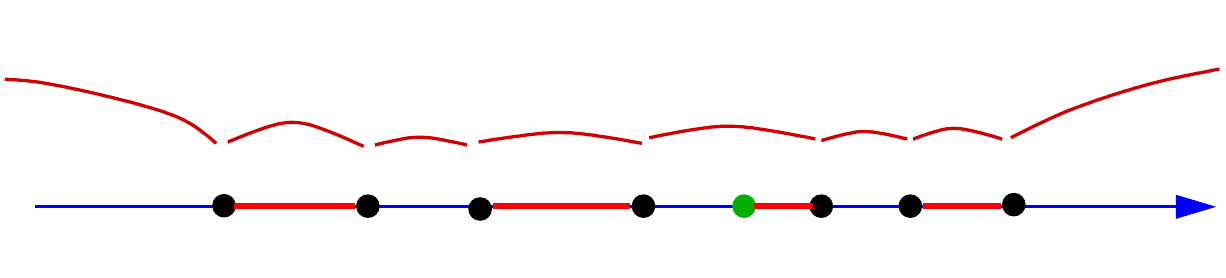_t}}
  \caption{Structure of the slice. The intervals generated by the
    intersection of $h_x$ and the $S_i$ are shown in red.}
    \label{fig:slicing-s-i}
  \end{figure}

  \begin{figure}[htp!]
    \centering
    \scalebox{0.7}{
    \input{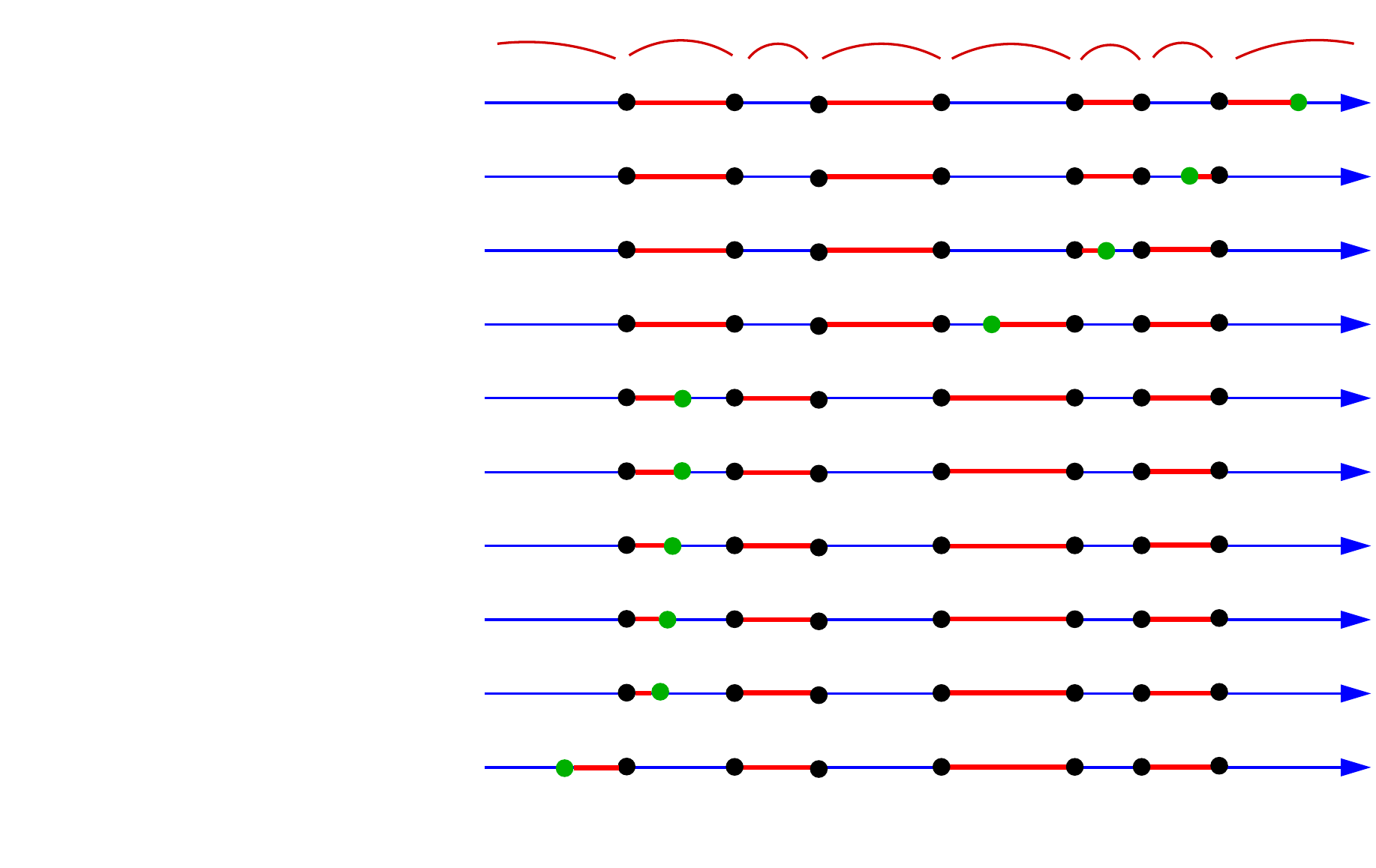_t}}
    \caption{All the slices and intervals they generate.}
    \label{fig:slicing-all-ses}
  \end{figure}

  \begin{proof}[Proof of Lemma~\ref{lem:graphs-good}] {\bf Outline:}
    We slice $T_i$, $\mdn{T}$, and the $2$-dimensional current $S_i$
    bounded by $T_i$ and $\mdn{T}$ vertically to get positive and
    negative oriented $0$-currents and oriented intervals. Sum of 
    the integrals of the lengths of those intervals over the $x$-axis
    equals the median objective function for $\mdn{T}$:
    \[ \sum_{i=1}^{\nCur}\Bbb{F}_\lambda (T_i-\mdn{T}) = \int \left( \sum_{i=1}^{\nCur} \Hd^1(h_x\cap
    S_i) \right)dx\] where we are interpreting $S_i$ as the set we integrate over
    to get the current $S_i$.  (We can also express this as
    $\sum_{i=1}^{\nCur}\Bbb{F}_\lambda (T_i-\mdn{T}) =  \sum_{i=1}^{\nCur} \mass(\hat{h}_x(S_i))$
    where $\hat{h}_x(S_i)$ is the slice of the current $S_i$ (which is
    itself a current) by the line $h_x$). The strategy of the proof
    shows that every sum equals or exceeds the sum generated by a
    graph that stays in the median interval of each slice. (Recall
    that in 1 dimension, the set of medians is either a point or an
    interval. If we minimize the median objective function over
    graphs, we get that the graph has to live in the median interval
    generated by the slicing of the $T_i$.) We see that if every slice
    of the median generates points not in the median interval of that
    slice, the cost exceeds the minimal cost.  Since the minimal cost
    is attained by any graph that stays in the median intervals, and
    any such interval is forced to stay in a cone with a kink, we are
    done.

    {\bf Now, the details:} 

    \begin{enumerate}
    \item We assume that the median intersects vertical slices
      transversely almost everywhere.
      \begin{enumerate}
      \item This can be shown by assuming not -- that the
      measure of $E$, defined to be the $x$'s such that $h_x$
      intersects $\mdn{T}$ tangentially at some point, has positive
      measure. Choose any (big) $C > 0$.
    \item We can cover $E$ with intervals
      $F_w$, $w\in E$ which are in one to one correspondence with
      disjoint pieces of $\mdn{T}$, $G_w$, such that $\Hd^1(G_w) > C
      \Hd^1(F_w)$.
    \item This implies that the length of $\mdn{T}$ is
      unbounded.
    \item Note: We use the fact that $\mdn{T}$ is piecewise smooth,
      so the projection of the points on $\mdn{T}$ where it is not
      smooth has measure zero on the $x$ axis.
      \end{enumerate}
    \item We assume that the $T_i$ all intersect the vertical slices transversely.
    \item Since the mass of $T_i$ and $\mdn{T}$ is finite, the slice
      $h_x$ generates a finite number of intersections with $\mdn{T}$
      for almost every $x$.
    \item As a result, for almost every $x$, $\Hd^0(h_x \cap \mdn{T})
      = 2m(x)+1$, each intersection with multiplicity and orientation of
      either $+1$ or $-1$. We will denote the intersection points
      $y_i$ $i=1,...,2m(x)+1$ This is shown in
      Figures~\ref{fig:slicing-h-x} and \ref{fig:slicing-s-i}.
    \item Note: we will abbreviate $m(x)$ to $m$ from this point on in
      this proof.
    \item The $2m+1$ points generate the $2m+2$ intervals $I_0 =
      (-\infty,y_0)$, $I_1= [y_1,y_2)$, $I_2 = [y_2,y_3)$,
      ..., $I_{2m}=[y_{2m},y_{2m+1})$, $I_{2m+1}= [y_{2m+1},\infty)$.
    \item Define $a_j \equiv|\{i | h_x\cap T_i\in I_j|$.
    \item Figure~\ref{fig:slicing-all-ses} shows all slices generated
      at one x for a case in which there are $10$ input currents
      $T_1$,...,$T_{10}$.
    \item Observe that there are two types of intervals: those with a
      positive left endpoint - $I_1$,$I_3$,...,$I_{2m-1}$, $I_{2m+1}$
      -- and those with a positive right endpoint --
      $I_0$,$I_2$,...,$I_{2m-2}$, $I_{2m}$.
    \item Observe that, when $j\geq 2$ is even, then all the slices
      generate $\sum_{i=0}^{j-1} a_i$ red intervals and $a_j$ partial
      intervals from the right endpoint to the intersections of the
      $T_i$ and $I_j$. Likewise, one can see that when $j < 2m+1$ is
      odd, all the slices generate $\sum_{i=j+1}^{2m+1} a_i$ red
      intervals and $a_j$ partial intervals from the $T_i$ generated
      points in $I_j$ and the positive endpoint of $I_j$.
    \item In order that the intervals generated by the intersection of
      $h_x$ with all the $S_i$'s provide separate paths to every
      (green) intersection point and some fixed positive point in $y_k
      \in\{y_1, y_3, ... y_{2m+1}\}$, four conditions must be met.  We
      must have that in each interval $I_l$, l even and odd, to the
      left and right of $y_k$ a sufficient number of full red
      intervals to create separate paths to the green $T_i$
      intersections that are in that interval (if their connecting
      partial interval connects in the wrong direction) or those
      further away. The four conditions thus generated
      are:
      \begin{enumerate}
      \item For $l = k,k+2,...$ we must have 
            \[\sum_{l+1}^{2m+1} a_i \geq \sum_{l+1}^{2m+1} a_i\]
      \item for $l = k-2,k-4,...$ we must have
            \[ \sum_{l+1}^{2m+1} a_i \geq \sum_{i=0}^{l} a_i\]
      \item for $l= k+1,k+3,...$ we must have 
            \[ \sum_{i=0}^{l-1} a_i \geq \sum_{i=l}^{2m+1} a_i\]
      \item for $l= k-1,k-3,...$ we must have 
            \[ \sum_{i=0}^{l-1} a_i \geq \sum_{i=0}^{l-1} a_i\]
      \end{enumerate}
    \item This reduces to finding $k\in\{1,3,5,...,2m-1,2m+1\}$ such that:
      \begin{enumerate}
      \item in the case $k = 1$:
      \begin{eqnarray*}
        \sum_{i=0}^{k} a_i &\geq&   \sum_{i=k+1}^{2m+1} a_i \\ 
      \end{eqnarray*}
      \item in the case that $k\in\{3,5,...,2m-1\}$:
      \begin{eqnarray*}
        \sum_{i=0}^{k} a_i     &\geq&   \sum_{i=k+1}^{2m+1} a_i \\ 
        \sum_{i=k-1}^{2m+1} a_i &\geq&   \sum_{i=0}^{k-2} a_i 
      \end{eqnarray*}
      \item in the case that $k = 2m+1$:
      \begin{eqnarray*}
        \sum_{i=k-1}^{2m+1} a_i &\geq&   \sum_{i=0}^{k-2} a_i 
      \end{eqnarray*}
      \end{enumerate}
    \item Since it is clear that such a $k$ always exists, we have
      that the cost of piece-wise smooth non-graph always equals or
      exceeds the cost of a graph.
    \item But we have even more: denote the median interval, generated
      by the $\nCur$ intersections $h_x\cap T_i$ on each vertical
      line, by $h_x^m$. Define the \emph{median interval envelope} to
      be the union $\cup_x (x,h_x^m) \subset \R^2$. Our proof implies
      that if, for some $x$, all positive intersections of $\mdn{T}$
      with $h_x$ occur outside the closed median interval on $h_x$,
      the cost of $\mdn{T}$ is strictly greater than a graph that
      lives in $\cup_x (x,h_x^m)$, which is impossible and so we
      conclude that there must be an intersection of {\bf any}
      piece-wise smooth median with each $h_x$ in the median interval
      on $h_x$.
    \end{enumerate}
    \vspace*{-0.2in}
  \end{proof}
  \noindent {\bfseries Back to Proof of \cref{thm-nonregmdn}:}
  Given the median $\mdn{T}$ must be in
  every median interval, it is immediate that, in the example shown in
  Figure~\ref{fig:non-smooth-median}, the medians cannot be differentiable at $p$
  because there is a kink in the median
  interval envelope at $p$.
\end{proof}
  \begin{rem}
    Since we can write the median functional as an integral over
    $1$-dimensional slices, the same proof generalizes easily to any
    codimension $1$ case with smooth input currents with shared boundary
    but non-smooth median interval envelope.
  \end{rem}

  \begin{figure}[ht!]
    \centering
    \input{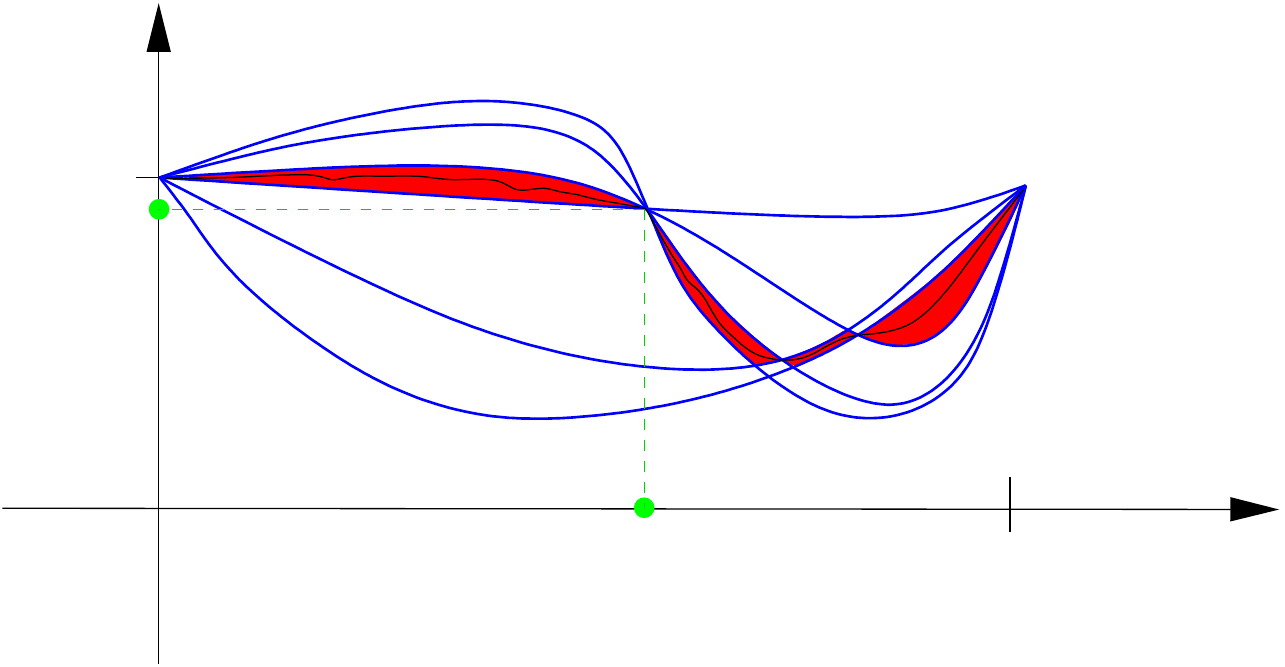_t}    
    \caption{Here is an example of a case in which smooth inputs do
      not imply smooth medians: every median is non-smooth!}
    \label{fig:non-smooth-median}
  \end{figure}

\subsection{Regularized Medians are in the $\epsilon$-Envelope}
\label{sec:epsilon-envelope}
Recall the definition of mass-regularized median given in \cref{eq:defmsregmdn}.
  We specialize the definition to $\mathcal{E}_U$ here, which we specified in \cref{def-E_U}.
  Also recall the definition of \emph{envelope} of a set of input currents (\cref{def-env}).
\begin{defn}[Mass regularized median]
  Let $\{T_i\}_{i=1}^\nCur \subset \mathcal{E}_U$ and $\{T_i\}_{i=1}^\nCur$ agrees outside some $U'\subset U.$ 
  Then the mass regularized median $\mdn{T}_{\lambda,\mu}$ is defined to be 
  \begin{align}\label{eqn-massregularizedmedian}
    \mdn{T}_{\lambda,\mu} = \arg\min_{T} \sum_{i=1}^\nCur \F_\lambda (T-T_i) + \mu \mass(T), \mu > 0.
  \end{align} 
  where we minimize $T$ over $\{T_i \myell (\Env)^c + P \,:\, P\in \mathcal{I}^\dimcur, \supp(P) \subset U', \partial (T_i\myell (\Env)^c)$ $=\partial P\}$.
\end{defn}
\begin{thm}[Mass regularized medians are in closed ${\bf Env}^{\epsilon} (\{T_i\})_{i=1}^\nCur$] \label{thm-mdninenv}
  Let $\{T_i\}_{i=1}^\nCur \subset \mathcal{E}_U$. 
  Then $\supp(P)\subset {\bf Env}^{\epsilon} (\{T_i\})_{i=1}^\nCur$ for some $\epsilon$, where ${\bf Env}^{\epsilon} (\{T_i\})_{i=1}^\nCur$ is the closed $\epsilon$-extension of ${\bf Env}(\{T_i\}_{i=1}^\nCur)$.
  Further, $\epsilon \rightarrow 0$ as $\mu/\lambda \rightarrow 0$. 
\end{thm}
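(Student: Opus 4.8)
The plan is to follow the projection argument of \cref{thm:medianinenvelope}, carrying the extra term $\mu\mass(T)$ through the comparison, and then to upgrade a volume bound on the ``bulge'' of the median outside $\Env$ into a bound on how far that bulge can reach. First I would pass to the set picture, exactly as in \cref{thm-mdnexist}: write a minimizer as $\mdn{T}_{\lambda,\mu}=\partial[[\hat E]]\myell U$ and each $T_i=\partial[[E_i]]\myell U$ for sets of finite perimeter, and let $E_0$ denote the common value of the $E_i$ on $\Closure(\Env)^c$. Since $\lambda$ is small enough that every flat-norm decomposition takes $T-T_i=\partial S$, we have $\F_\lambda(\mdn{T}_{\lambda,\mu}-T_i)=\lambda\,\mathcal{L}^{\dimcur+1}(\hat E\triangle E_i)$ and $\mass(\mdn{T}_{\lambda,\mu})=\Per_U(\hat E)$, so the median functional equals $\lambda\sum_i\mathcal{L}^{\dimcur+1}(\hat E\triangle E_i)+\mu\Per_U(\hat E)$ up to an additive constant. (That a minimizer exists and lies in $\Eu$ follows from the compactness theorem for sets of finite perimeter, just as in \cref{thm-mdnexist}; below we only use that \emph{some} minimizer has been fixed.)

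Next comes the comparison. Suppose, toward a contradiction, that $\supp(P)\not\subset{\bf Env}^{\epsilon}$, and choose $x_0\in\supp(P)$ with $\operatorname{dist}(x_0,\Closure(\Env))>\epsilon$; after a harmless initial reduction we may assume $x_0$ also lies at distance $>\epsilon$ from $\supp(\partial[[E_0]])$, so $B:=B(x_0,\epsilon)$ is disjoint from $\Closure(\Env)$ and from $\supp(\partial[[E_0]])$, while $x_0$ sits in the support of $\partial[[\hat E\triangle E_0]]$. Let $\mdn{T}'=\partial[[\hat E']]\myell U$, where $\hat E'$ agrees with $\hat E$ off $B$ and with $E_0$ on $B$. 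Two estimates follow: $(i)$ on $B$ the symmetric difference with every $E_i$ is annihilated and nothing changes off $B$, so $\sum_i\F_\lambda(\mdn{T}_{\lambda,\mu}-T_i)-\sum_i\F_\lambda(\mdn{T}'-T_i)=\lambda\,\nCur\,v$ with $v:=\mathcal{L}^{\dimcur+1}\!\big((\hat E\triangle E_0)\cap B\big)>0$; $(ii)$ a perimeter increases by at most the area of the cut, so $\mass(\mdn{T}')-\mass(\mdn{T}_{\lambda,\mu})\le\Hd^{\dimcur}(\partial B)=c_\dimcur\,\epsilon^{\dimcur}$. Minimality of $\mdn{T}_{\lambda,\mu}$ then forces $\lambda\,\nCur\,v\le c_\dimcur\,\mu\,\epsilon^{\dimcur}$.

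The crux is to convert this into a bound on $\epsilon$, i.e.\ a matching lower bound $v\ge c'_\dimcur\,\epsilon^{\dimcur+1}$. This is a regularity statement for the regularized minimizer away from the envelope: on the open set $\Closure(\Env)^c$, where the functional reduces to $\mu\Per(\hat E)+\lambda\nCur\,\mathcal{L}^{\dimcur+1}(\hat E\triangle E_0)$, comparing $\hat E$ with competitors supported in small balls shows that $\hat E$ is a $\Lambda$-minimizer of perimeter there with $\Lambda=\lambda\nCur/\mu$ (equivalently, $\partial^*\hat E$ has constant mean curvature $\lambda\nCur/\mu$ there, bending back toward $E_0$). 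The usual density estimate for $\Lambda$-minimizers gives $\mathcal{L}^{\dimcur+1}\!\big((\hat E\triangle E_0)\cap B(x_0,r)\big)\ge c'_\dimcur\,r^{\dimcur+1}$ for all $r\le\min\{\operatorname{dist}(x_0,\partial\Closure(\Env)),\,c''_\dimcur/\Lambda\}$ whenever $x_0$ lies in the support of $\partial^*(\hat E\triangle E_0)$. Taking $r=\epsilon$ and combining with the previous line gives $c'_\dimcur\,\epsilon^{\dimcur+1}\le v\le (c_\dimcur\mu/(\lambda\nCur))\,\epsilon^{\dimcur}$, hence $\epsilon\le C_\dimcur\,\mu/(\lambda\nCur)$. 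Contrapositively, $\supp(P)\subset{\bf Env}^{\epsilon}$ with $\epsilon=C_\dimcur\,\mu/(\lambda\nCur)$, which tends to $0$ as $\mu/\lambda\to 0$, as required.

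I expect this last step to be the main obstacle --- stating and, in the self-contained style of this paper, sketching the $\Lambda$-minimizer density estimate (equivalently the constant-mean-curvature barrier) for $\hat E$ on $\Closure(\Env)^c$; Steps 1--2 are a routine adaptation of \cref{thm:medianinenvelope}. A cheaper route that avoids the almost-minimizer machinery is to take instead the competitor replacing $\hat E$ by $E_0$ on \emph{all} of $U\setminus\Closure(\Env)$ at once; the same two estimates then bound the total bulge volume $\mathcal{L}^{\dimcur+1}\!\big((\hat E\triangle E_0)\setminus\Closure(\Env)\big)$ by $\mu M/(\lambda\nCur)$, where $M=\Hd^{\dimcur}(\partial\Closure(\Env))+\mass(T_i\myell\Closure(\Env)^c)$ depends only on $\{T_i\}$ and $U$; one still needs a density estimate to pass from a volume bound to a reach bound, but only at the still-sufficient cost of the weaker rate $\epsilon=C(\mu/\lambda)^{1/(\dimcur+1)}$. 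A handful of measure-theoretic edge cases --- chiefly the interaction of the bulge with any common input structure lying outside $\Closure(\Env)$, and fitting the ball inside the fixed collar $U\setminus\Closure(\Env)$ once $\mu/\lambda$ is small --- need care but are not the conceptual difficulty.
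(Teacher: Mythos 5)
Your first two steps are exactly the paper's: pass to the set picture, compare the minimizer against the competitor obtained by flattening the bulge inside a ball $B(x_0,r)$ that misses $\Closure(\Env)$, and deduce $\lambda\nCur\, v(r)\le \mu\,\Hd^{\dimcur}(\partial B(x_0,r))$, where $v(r)$ is the bulge volume in the ball. The whole theorem then rides on a matching \emph{lower} bound for $v(r)$, and this is where your argument has a genuine gap. You propose to get $v(r)\ge c'_\dimcur r^{\dimcur+1}$ from the standard density estimate for $\Lambda$-minimizers of perimeter, with $\Lambda=\lambda\nCur/\mu$. But that estimate is only valid at radii $r\lesssim 1/\Lambda=\mu/(\lambda\nCur)$ (you state this restriction yourself and then set $r=\epsilon$ anyway), and at those radii the comparison upper bound gives density $v(r)/(\alpha(\dimcur+1)r^{\dimcur+1})\le (\dimcur+1)/(\Lambda r)$, which is bounded below by a dimensional constant larger than $1$ precisely on the admissible range $\Lambda r\le 1$. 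So the two inequalities are never in contradiction at any radius where both hold: the combination ``$c'_\dimcur\epsilon^{\dimcur+1}\le v\le c_\dimcur(\mu/\lambda\nCur)\epsilon^{\dimcur}$, hence $\epsilon\le C_\dimcur\mu/(\lambda\nCur)$'' either presupposes $\epsilon\le c''_\dimcur\mu/(\lambda\nCur)$ (making the conclusion weaker than the hypothesis) or applies the density estimate outside its range. The same objection applies to your ``cheaper route,'' which still needs a density estimate to convert a volume bound into a reach bound.

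The paper closes exactly this hole by proving the lower bound directly at a \emph{fixed} scale $r$, independent of $\Lambda$, via a dichotomy on the sliced quantities $h_t=\partial B\cap S$ and $g_t=\partial^*S\cap B(x_0,t)$. Either there is some $t_0\in(0,r/2)$ with $\Hd^{\dimcur}(h_{t_0})\le\Hd^{\dimcur}(g_{t_0})$ --- in which case the spherical-cap competitor at radius $t_0$ does not increase mass at all, so the strict decrease $\nCur\lambda\,v(t_0)>0$ in the flat-norm sum contradicts minimality with no reference to $\mu$ whatsoever --- or $\Hd^{\dimcur}(h_t)>\Hd^{\dimcur}(g_t)$ for every $t$, and then the relative isoperimetric inequality in the ball gives the differential inequality $\frac{d}{dt}\Hd^{\dimcur+1}(H_t)\ge C\,\Hd^{\dimcur+1}(H_t)^{\dimcur/(\dimcur+1)}$, which integrates to $\Hd^{\dimcur+1}(H_{r/2})\ge (Cr/4)^{\dimcur+1}$ and contradicts the smallness of the bulge volume forced by your comparison step once $\mu/\lambda$ is small. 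The first branch of this dichotomy is precisely what replaces the $\Lambda r\le 1$ absorption in the textbook density proof, and it is the idea your write-up is missing; with it, the argument yields the paper's qualitative conclusion ($\epsilon\to 0$ as $\mu/\lambda\to 0$) rather than the linear rate you claim. Your remaining reservations (keeping the ball away from the fixed part $T_i\myell\Closure(\Env)^c$ and inside the collar $U'$) are indeed side issues.
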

\begin{proof}
We divide the proof into steps: 
\begin{enumerate}
\item \emph{There exists an $\epsilon$ such that $\supp(P) \subset {\bf Env}^{\epsilon} (\{T_i\})_{i=1}^\nCur$.}\\

Take the convex hull of $\Env$, then $\supp(P)$ has to stay inside this convex hull; otherwise, we can use the same argument as in \cref{thm:medianinenvelope} to reach a contradiction. Since $\Env\subset U$ and $U$ is bounded, then convex hull of $\Env$ is also bounded. Define 
$$R = d(\Env, \mbox{\bf{con}}(\Env)),$$
where $d$ is the Hausdorff distance between two sets. Then $\supp(P)\subset {\bf Env}^{R} (\{T_i\})_{i=1}^\nCur$. This implies there exists some $\epsilon \leq R$ such that $\supp(P) \subset {\bf Env}^{\epsilon} (\{T_i\})_{i=1}^\nCur$. In fact, in the following step, we will prove that the smallest $\epsilon$ defining containing ${\bf Env}^{\epsilon} (\{T_i\})_{i=1}^\nCur$ goes to 0 as $\mu/\lambda \rightarrow 0$.
\item \emph{There exists an $\epsilon$ such that $\supp(P) \subset {\bf Env}^{\epsilon} (\{T_i\})_{i=1}^\nCur$ and $\epsilon\rightarrow 0$ as $\mu/\lambda \rightarrow 0.$}\\

If this is not the case, then there exists an $r>0$ and a point $p \in \supp(P)$ such that 
\begin{align*}
d(p, \Env) > r,\ as\ \mu/\lambda \rightarrow 0.
\end{align*}

\begin{enumerate}
\item Next, define $[[S]]$  to be  
 \[ [[S]](\omega) = \int_{S} \omega(\vec{x}), \forall \omega\]
 where
\begin{align*}
S= {\bf Env}(\{T_i\}_{i=1}^\nCur \cup \mdn{T}_{\lambda,\mu}) \cap B(p,r/2),
\end{align*}
and the orientation of $[[S]]$ is induced by $\mdn{T}_{\lambda, \mu}.$ Define a new median to be $\mdn{T}'_{\lambda, \mu}$ 
      \begin{align*}
        \mdn{T}'_{\lambda,\mu} = \mdn{T}_{\lambda,\mu} - \boundary[[S]].
      \end{align*}
Intuitively, $\mdn{T}'_{\lambda,\mu}$ differs from $\mdn{T}_{\lambda,\mu}$ only inside the closure of $B(Q,r/2)$ and it pushes $\mdn{T}_{\lambda,\mu}\myell B(Q,$ $r/2)$ to the boundary of $B(Q, r/2).$
As $\mdn{T}_{\lambda, \mu}$ is the mass regularized median,
and since we are in codimension $1$,  $\mdn{T}'_{\lambda,\mu} = \mdn{T}_{\lambda,\mu} - \boundary[[S]]$, which implies
\[\F_\lambda (\mdn{T}_{\lambda, \mu} - T_i) - \F_\lambda (\mdn{T}'_{\lambda, \mu} - T_i) = \lambda \mass[[S]],\]
we have 
\begin{align}\label{massmediancomparison}
\begin{split}
0 &\geq \left(\sum_{i=1}^\nCur \F_\lambda (\mdn{T}_{\lambda, \mu} - T_i) + \mu \mass(\mdn{T}_{\lambda, \mu})\right) - \left(\sum_{i=1}^\nCur \F_\lambda (\mdn{T}'_{\lambda, \mu} - T_i) + \mu \mass(\mdn{T}'_{\lambda, \mu})\right) \\
& = \nCur \lambda \mass([[S]]) - \mu(\mass(\mdn{T}'_{\lambda, \mu}) - \mass(\mdn{T}_{\lambda, \mu})).
\end{split}
\end{align}
If it were the case that $\mass(\mdn{T}'_{\lambda, \mu}) < \mass(\mdn{T}_{\lambda, \mu})$, then the last row of (\ref{massmediancomparison}) would be greater than 0, which would show that $\mdn{T}_{\lambda,\mu}$ could not be the mass regularized median. Therefore let's assume $\mass(\mdn{T}'_{\lambda, \mu}) \geq \mass(\mdn{T}_{\lambda, \mu})$ and under this assumption, 
\begin{align*}
\mass(\mdn{T}'_{\lambda, \mu}) - \mass(\mdn{T}_{\lambda, \mu}) &= \mass(\mdn{T}'_{\lambda, \mu}\myell \partial B(Q, r/2)) -  \mass(\mdn{T}_{\lambda, \mu}\myell B(Q, r/2))\\
&\leq \mass(\mdn{T}'_{\lambda, \mu}\myell \partial B(Q, r/2)) \\
&\leq (\dimcur +1) \alpha(\dimcur + 1) (r/2)^{\dimcur},
\end{align*}
and hence (\ref{massmediancomparison}) becomes 
\begin{align}\label{massmediancomparison1}
\begin{split}
0 &\geq \left(\sum_{i=1}^\nCur \F_\lambda (\mdn{T}_{\lambda, \mu} - T_i) + \mu \mass(\mdn{T}_{\lambda, \mu})\right) - \left(\sum_{i=1}^\nCur \F_\lambda (\mdn{T}'_{\lambda, \mu} - T_i) + \mu \mass(\mdn{T}'_{\lambda, \mu})\right) \\
& \geq \nCur \lambda \mass([[S]]) - \mu((\dimcur +1) \alpha(\dimcur + 1) (r/2)^{\dimcur}\\
& = \lambda \left(\nCur \mass([[S]]) - \frac{\mu}{\lambda}((\dimcur +1) \alpha(\dimcur + 1) (r/2)^{\dimcur}\right).
\end{split}
\end{align}
Obviously, if $\mass([[S]])$ does not converge to 0 as $\mu/\lambda \rightarrow 0$, then the last row of (\ref{massmediancomparison1}) will eventually be greater than 0, which is a contradiction. 

\item Now let's suppose $\mass([[S]])\rightarrow 0$ as $\mu/\lambda \rightarrow 0$. Define the following sets which are important for the rest of the proof: 

\begin{itemize}
\item[] $h_t = \partial B(Q, t)\cap S,\ t\in (0, r/2)$,
\item[] $g_t = \partial^* S \cap B(Q,t)$,
\item[] $H_t = S\cap B(Q,t).$
\end{itemize}
Note that $g_t\cup h_t = \partial^* H_t$ for $\Hd^{\dimcur}$ almost everywhere and that
\begin{align*}
\Hd^{\dimcur + 1} (H_t) = \int_0^t \Hd^{\dimcur}(h_s) ds\ \mathrm{and}\ \Hd^{\dimcur + 1}(H_t) \leq \Hd^{\dimcur + 1}(S).
\end{align*}

\item {\bf{Claim:}} There exists a $t_0\in (0, r/2)$ such that $\Hd^{\dimcur}(h_{t_0}) \leq \Hd^{\dimcur}(g_{t_0}).$
\begin{enumerate}
\item If claim is true, we are done since this implies $\mass(\mdn{T}'_{\lambda,\mu}) \leq \mass(\mdn{T}_{\lambda,\mu})$, which leads to a contradiction of \cref{massmediancomparison}.
  Assume the claim is false, then $\Hd^{\dimcur}(h_t) > \Hd^{\dimcur}(g_t)$ for all $t\in (0, r/2)$.
\item Choose $\mu/\lambda$ small enough that $\Hd^{\dimcur + 1} (H_{r/2}) < \min\left\{ \frac{\alpha(\dimcur + 1)}{2} \left(\frac{r}{4}\right)^{\dimcur + 1}, \left(\frac{Cr}{4}\right)^{\dimcur +1}\right\},$ where $C$ is the constant in the  relative isoperimetric inequality in a ball (Proposition 12.37 in \cite{maggi-2012-sets}). 
\item This implies that 
\begin{align*}
\frac{\Hd^{\dimcur + 1} (H_{r/4})}{\Hd^{\dimcur+1} B(Q,r/4)} < \frac{1}{2}.
\end{align*}
\item We can then apply relative isoperimetric inequality in a ball (Proposition 12.37 in \cite{maggi-2012-sets}) to say that for $t\in (r/4, r/2)$, 
\begin{align*}
\Hd^{\dimcur}(h_t) = \frac{d}{dt} \Hd^{\dimcur + 1}(H_t) > \Hd^{\dimcur} (g_t) \geq C (\Hd^{\dimcur+1} (H_t))^{\frac{p}{p+1}}.
\end{align*}
Solving the inequality above and integrating from $r/4$ to $r/2$ yields 
\begin{align*}
(p+1)\Hd^{\dimcur + 1}(H_{r/2})^{1/(p+1)} - (p+1) \Hd^{\dimcur + 1}(H_{r/4})^{1/(p+1)} \geq \frac{Cr}{4}. 
\end{align*}
Therefore 
\begin{align*}
\Hd^{\dimcur + 1}(H_{r/2}) \geq \left(\frac{Cr}{4}\right)^{\dimcur + 1}.
\end{align*}
which contradicts (b).
\end{enumerate}
\end{enumerate}
\end{enumerate}
\vspace*{-0.1in}
\end{proof}

\section{Shared Boundaries: Co-dimension $> 1$ results}
\label{sec:shared2}

\subsection{Books are Minimizing}

\label{sec:minimal-books}

\begin{defn}[Books in $\R^3$]
  let $L$ be the vertical axis ($z$-axis or $x_3$-axis) in $\R^3$.
  Let $V \equiv \{\vv_i\}_{i=1}^\nCur$ be $\nCur$ unit vectors in $\R^3$ whose $z$-coordinates are $0$.
  We will call $V$ \emph{stationary} if $\sum_{i=1}^\nCur \vv_i = 0$.
  Let $H_i$ be the half plane containing $\vv_i$ whose boundary is $L$.
  Let $C\subset\R^3$ denote a closed, solid cylinder that is bounded and circular, whose axis is the $z$-axis: $C = \{(x,y,z)\; | \; x^2 + y^2 < r, \; 0 \leq z \leq l\}$.
A set $\bk \subset \R^3$ such that $\bk = C\cap\{\cup_{i=1}^\nCur H_i\}$ for some cylinder $C$ and some stationary $V$ will be called a \emph{book}.
\end{defn}

\begin{defn}[Edge Set, and Pages]
  An \emph{edge set} $\es$ is any set $\es = \partial
  C\cap\{\cup_{i=1}^\nCur H_i\}$ such that the direction set of the
  $H_i$'s is stationary. We can write this more simply as $\es
  = \partial \bk$ for any book $\bk$, if $\partial$ is the varifold
  boundary. We will sometimes write $\es$ as $\es(\bk)$.
  Further, we define the individual \emph{page} $E_i$ to be $H_i \cap \es$.
\end{defn}

Because, in our case, we care about multiplicity, the kind of pinching
that a Lipschitz map can do to reduce the measure of the set
is not of interest to us. Therefore, we will
consider bi-Lipschitz deformations of a book.

\begin{thm}[Books are minimal]
  Given any book $\bk$, the corresponding edge set $\es(\bk)$, and any
  bi-Lipschitz map $f:\R^3\rightarrow\R^3$ such that $f|_{\es(\bk)}$
  is the identity, we have that $\Hd^2(\bk) \leq \Hd^2(f(\bk))$. In
  fact, equality is obtained only in the case that $f(\bk) = \bk$.
\end{thm}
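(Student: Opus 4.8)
The plan is to bound $\Hd^2(f(\bk))$ from below by a ``slicing plus combinatorics'' argument, in the same spirit as the proof of \cref{lem:graphs-good}, and then to run a rigidity argument for the equality case. The coordinate function $z$ is $1$-Lipschitz on $\R^3$, so its restriction to the rectifiable set $f(\bk)$ has tangential Jacobian $\le 1$ almost everywhere, and the coarea inequality gives
\[ \Hd^2\big(f(\bk)\big) \;\ge\; \int_{0}^{l} \Hd^1\big(f(\bk)\cap\{z=t\}\big)\,dt . \]
On the other hand every page of $\bk$ lies in a vertical half-plane $H_i$, so $z$ restricted to $\bk$ has tangential Jacobian identically $1$, whence
\[ \Hd^2(\bk) \;=\; \int_{0}^{l} \Hd^1\big(\bk\cap\{z=t\}\big)\,dt , \]
and for each $t\in(0,l)$ the slice $\bk\cap\{z=t\}$ is the planar ``star'' joining the axis point $(0,0,t)$ to the $\nCur$ points where the pages meet the boundary circle of $C$. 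Thus it suffices to prove $\int_0^l \Hd^1(f(\bk)\cap\{z=t\})\,dt \ge \int_0^l \Hd^1(\bk\cap\{z=t\})\,dt$.

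Next comes the combinatorial core. Since $f$ restricts to the identity on $\es(\bk)=\partial\bk$, for a.e.\ $t\in(0,l)$ the $1$-rectifiable slice $\Sigma_t := f(\bk)\cap\{z=t\}$ meets the lateral boundary of $C$ exactly in the $\nCur$ points $\{\,\partial C\cap H_i\,\}$ (the top and bottom disks only contribute at $z=0,l$), and, inheriting the local orientations of the pages, $\Sigma_t$ carries the structure of an integer-multiplicity $1$-chain whose boundary is constrained both on $\partial C$ and along $f(L)$. I would encode this data---and the way the slices glue together as $t$ ranges over $(0,l)$---as a finite weighted graph $G$; the stationarity hypothesis $\sum_i\vv_i=\vzero$ then translates into a balancing/parity condition on $G$, which is the condition I would call being \emph{cozy}. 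The heart of the proof is the graph-theoretic statement that every cozy graph is \emph{comfortable}, meaning it admits a path decomposition realizing the bound $\int_0^l \Hd^1(\Sigma_t)\,dt \ge \nCur\, r^{1/2}\, l = \Hd^2(\bk)$. Granting this lemma, the coarea reduction above yields $\Hd^2(f(\bk)) \ge \Hd^2(\bk)$.

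For equality, suppose $\Hd^2(f(\bk)) = \Hd^2(\bk)$. Then equality holds in the coarea step, so the tangential Jacobian of $z$ on $f(\bk)$ equals $1$ $\Hd^2$-almost everywhere; hence $f(\bk)$ is ``vertical'' a.e., and each page-image $f(E_i)$ is a union of vertical segments, i.e.\ a vertical cylinder over its horizontal shadow $\gamma_i$. But $\partial f(E_i) = \partial E_i$, and the bottom edge of $E_i$ forces $\gamma_i$ to be the straight radial segment from the axis to $\partial C\cap H_i$; a vertical cylinder over that segment is exactly $E_i\subset H_i$. Therefore $f(E_i)=E_i$ for every $i$, so $f(\bk)=\bk$.

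The difficulty is concentrated in the middle step: pinning down the correct admissibility condition on the slices $\Sigma_t$ (including the bookkeeping forced by the possible non-orientability of $\bk$) so that the relevant graphs are precisely the cozy ones, and then proving the purely combinatorial statement that cozy graphs are comfortable---this is where stationarity of $V$ is genuinely used, and it is plausibly false without it. By comparison the coarea lower bound and the rigidity argument for equality are routine. A paired-calibration argument is the natural alternative to slicing, but setting up a valid calibration for a general book is itself delicate, which is a further reason to route the argument through the combinatorial lemma.
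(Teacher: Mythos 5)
Your overall strategy---slice by horizontal planes, compare each slice of $f(\bk)$ to the star $W_z=\bk\cap H_z$, and reduce the per-slice comparison to a combinatorial lemma about edge-colored graphs whose conclusion is the existence of edge-disjoint paths---is exactly the route the paper takes, and your coarea reduction and your identification of the crux are right. But the proposal has two genuine gaps in the middle step, which is where essentially all of the work lives.

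First, you cannot directly treat $\Sigma_t=f(\bk)\cap\{z=t\}$ as a finite graph: for a bi-Lipschitz image it is merely a $1$-rectifiable set. The paper first replaces $f(\bk)$ by a polygonal approximation $\bk_{f,\delta}$ (via $C^1$ approximation of Lipschitz maps on a fine grid of each page, moving the boundary back onto $\es(\bk)$, and perturbing so the faces meet a.e.\ $H_z$ transversely); only then are the slices honest finite graphs, and some such step is unavoidable. Second, and more importantly, you have misplaced where stationarity enters and what the graph lemma delivers. The lemma (every $k$-cozy graph is $k$-comfortable) produces, for a slice, a common vertex $w$ joined to the $\nCur$ boundary points $v_i$ by edge-disjoint paths; the ``cozy'' structure ($k$-regularity with a proper $k$-edge-coloring at the vertices of $f(L)\cap H_z$) comes from the fact that each of the $k$ pages contributes exactly one edge of its color at each such vertex---not from $\sum_i \vv_i=\vzero$. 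Stationarity is used \emph{afterwards}, in a purely geometric step: it says the axis point is the geometric median of the $v_i$'s, so any spider of edge-disjoint paths from the $v_i$'s to a common point has total length at least $\Hd^1(W_z)$, with equality only for the star itself. Your sketch collapses these two steps into one and attributes the balancing condition to the wrong place, so that ``granting the lemma'' in your formulation amounts to granting essentially the whole theorem. (Your rigidity argument via verticality of the tangent planes is a workable alternative to the paper's set-inclusion argument, provided you also extract from the equality case that $\Sigma_t=W_t$ for a.e.\ $t$, which again requires the strict form of the median inequality rather than verticality alone.)
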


\begin{proof}
  The proof follows from a slicing argument in combination with a new result from graph theory.
  Figure~\ref{fig:bk-deformation-figure} illustrates the details.
  \begin{figure}[hb!]
    \centering
    \scalebox{0.5}{\input{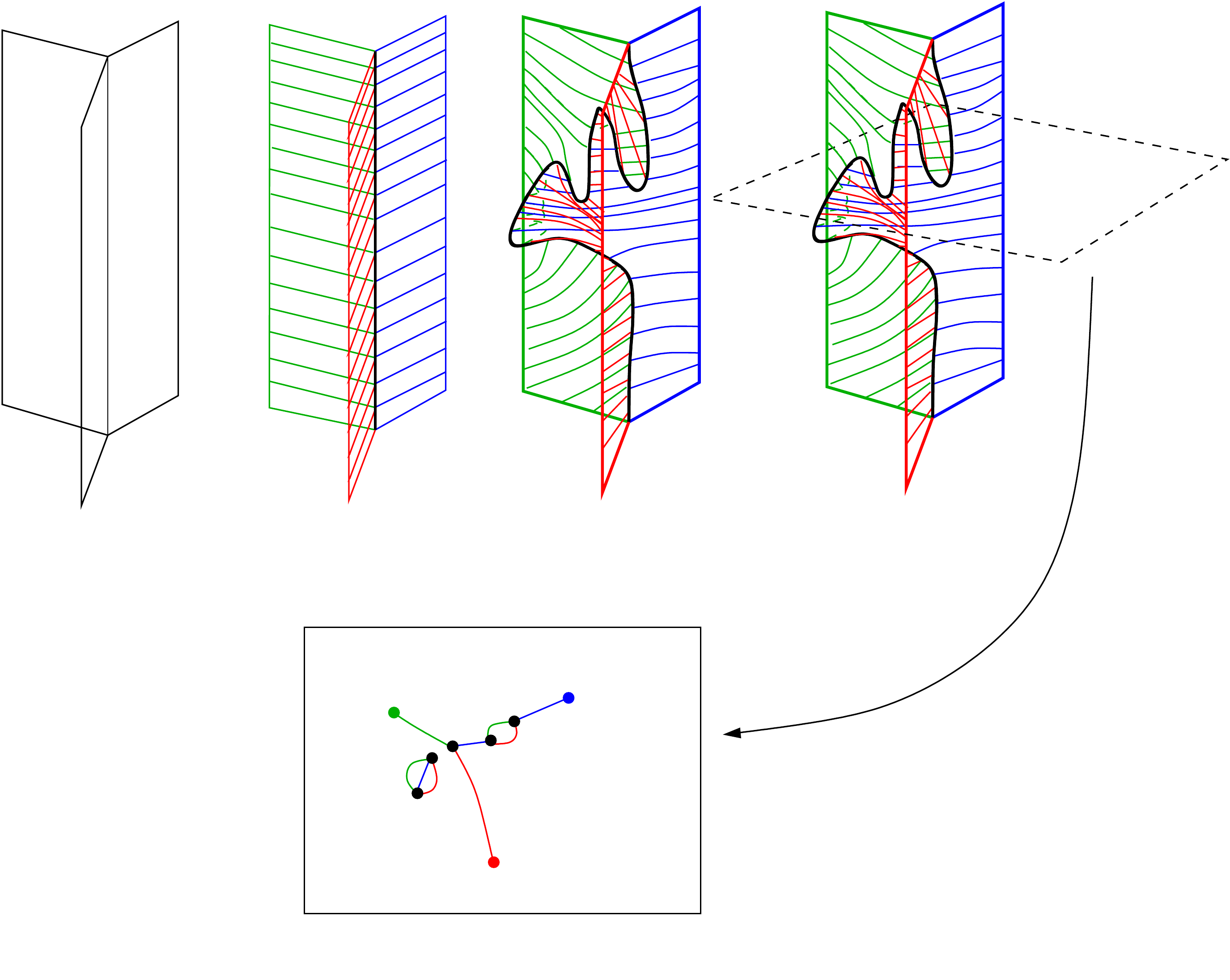_t}}
    \caption{Illustration of slicing in the proof of book
      optimality. ${\red E_1}$, ${\blue E_2}$ and ${\green E_3}$ are
      the leaves of the book, $\bk = {\red E_1}\cup{\blue
        E_2}\cup{\green E_3}$, with colors to help us see what is going
      on.
      $T$ is the ``spine'' of the book $\bk$, and $T_i$'s are the boundaries of the leaves minus $T$.
      \label{fig:bk-deformation-figure}
    }
  \end{figure}

  \begin{enumerate}

    \item
      Denote the $\nCur$ pieces of $\es$ (each ``$C$-shaped'' piece) $T_i$, $i=1,...,\nCur$.
      Define $T = L \cap \bk =  \{0\}\times\{0\}\times [0,l]$ for some $l<\infty$.
      Define $H_{z} = \{(x_1,x_2,x_3) \, | \, x_3 = z\}$.

  \item We can approximate $f(\bk)$ arbitrarily well with a polygonal
      approximation that keeps $f(T_i)=T_i$ fixed.  We outline how
      this is done. By \emph{approximate} here, we mean close in
      measure and in Hausdorff distance: $|\Hd^2(f(\bk)) -
      \Hd^2(\bk_{f,\delta})| + d_H(f(\bk),\bk_{f,\delta}) <
      \delta$, where we are denoting the the approximation of $f(\bk)$
      by $\bk_{f,\delta}$.
      \begin{description}
      \item[$C^1$ Approximation:] The first step is to use the $C^1$
        approximation of Lipschitz maps to find a $C^1$ map $\phi$ approximating $f$
        and a fine enough regular grid $G_i^\epsilon$ on the page of
        the book $E_i$ so that the triangular polygonal surface
        generated by the points $\phi(G_i^\epsilon)$,
        $P_{\phi(G_i^\epsilon)}$, satisfies $|\Hd^2(f(E_i)) -
        \Hd^2(P_{\phi(G_i^\epsilon)})| < \epsilon$ and $d_H(f(E_i),
        P_{\phi(G_i^\epsilon)}) < \epsilon$.
      \item[Fixing up the edges:] We move the boundary points of
        $P_{\phi(G_i^\epsilon)}$ back to the boundary points of
        $P_{f(G_i^\epsilon)}$ paying a penalty of $C'\epsilon$ in the
        $d_H$ distance and $C''\epsilon$ in the difference of measures,
        where $C',C''$ depend only on the size of the Book and not in $f$
        or $\phi$. We denote this new polygonal surface by
        $\hat{P}_{\phi(G_i^\epsilon)}$.
      \item[Perturbing into transverse intersections:] Next we perturb
        the points in $\cup_i \hat{P}_{\phi(G_i^\epsilon)}$ enough
        that none of them coincide and none of the resulting edges and
        faces meet almost every $H_z$ transversely, all without
        introducing more than an additional $\epsilon$ to each of the
        $\Hd^2$ and $d_H$ differences between $P_{\phi(G_i^\epsilon)}$
        and $f(E_i)$. In this case, being transverse boils down to
        none of the edges of faces being horizontal. (This last step
        insuring everything is non-horizontal is not actually
        necessary since we will integrate over the slices and the
        number of slices that could contain horizontal edges of sides
        is finite and therefore ignored in the integration.)
      \item[The Approximation:] We define $\bk_{f,\eta(\epsilon)}$
        to be the resulting perturbed $\cup_i \hat{P}_{\phi(G_i^\epsilon)}$.
      \item[Verifying the approximation:] Doing the book-keeping, we
        find that there is a $C$ not depending on $f$ or $\phi$ such
        that $|\Hd^2(f(\bk)) - \Hd^2(\bk_{f,\eta(\epsilon)})| +
        d_H(f(\bk),\bk_{f,\eta(\epsilon)})< C\epsilon$; i.e.,
        $\eta(\epsilon) = C\epsilon$.
      \item[Conclusion:] Since $\epsilon$ was arbitrary, 
        we can choose $\epsilon = \frac{\delta}{C}$ and we get
        that $\bk_{f,\eta(\epsilon)} = \bk_{f,C\epsilon} = \bk_{f,\delta}$
        which satisfies $|\Hd^2(f(\bk)) -
      \Hd^2(\bk_{f,\delta})| + d_H(f(\bk),\bk_{f,\delta}) <
      \delta$.
      \end{description}
    \item
      Define $S_z = \bk_{f,\delta}\cap H_z$, $\mu S_z = \Hd^1\myell S_z$, $\mu = \Hd^2\myell\bk$ and $\mu_{f,\delta} =  \Hd^2\myell\bk_{f,\delta}$.

    \item
      Now we slice the measure $\mu_{f,\delta}$ with $H_z$ to  produce another Radon measure $\omega_z$ with the property that its support is $S_z$ and $\mu_{f,\delta}(E) = \int_0^l \omega_z(E)$.
      We also know that $\omega_z = h\myell \mu S_z$ with $h \geq 1$ everywhere.\footnote{In terms of the slicing measures in section 1.9 of the revised edition of the text by Evans and Gariepy~\cite{evans-1992-1}, we would first write $\sigma$ as $g\myell(\Hd^1\myell T)$ which we can do since $\sigma << \Hd^1\myell T$, then we get  that $\omega_z = g(z)\nu_z$.}
    
    \item Define $W_z = \bk \cap H_z$.  Note that all $W_z$ are
      equivalent modulo a vertical translation.  We define vertices
        $v = T \cap H_z$ and $v_i = T_i \cap H_z$ for $i=1,...,\nCur$.
        Note that for all other graphs $G$ with a distinct sequence of
        edges joining each of the $v_i$'s to some other common vertex
        in $H_z$, we will have $\Hd^1(G) > \Hd^1(W_z)$.
    \item
      We denote the corresponding vertices under $f$ (as represented by $\bk_{f,\delta}$) as $v^f = f(T) \cap H_z$ and $v^f_i = f(T_i) \cap H_z$ for $i=1,...,\nCur$.
      Suppose that for almost every $z \in [0,l]$, $S_z$ contains a graph $G$ that has a distinct sequences of edges joining each of the $v^f_i$'s to some other common vertex in $H_z$.
      Then we have  that
      \begin{eqnarray}
               \Hd^2(\bk_{f,\delta}) &   =  &    \int_0^l \omega_z(S_z) \\
                                   & \geq &    \int_0^l \Hd^1(S_z) \\
                                   & \geq &    \int_0^l \Hd^1(W_z) \\
                                   &   =  &    \Hd^2(\bk).
      \end{eqnarray}

    \item Now we need to show that $S_z$ contains the graph $G$
      connecting each of the $v^f_i$'s to some common vertex with
      separate paths. See the bottom box in
      \cref{fig:bk-deformation-figure} for an illustration---even
      though not all black vertices in the middle are connected to the
      red, green, and blue vertices by separate paths here, there does
      exist one such black vertex. The result holds automatically if
      there is only \emph{one} common vertex of the form $v^f = f(T)
      \cap H_z$.  Let $T_{f,\delta}$ represent the approximation of
      $f(T)$ (in the same way $\bk_{f,\delta}$ approximates $f(\bk)$).
      Note that our approximation leaves $f(T_i)$ fixed, and $f$ in
      turn leaves $T_i$ fixed.  We assume $T_{f,\delta} \cap H_z$
      includes more than one common vertex.  We assign $k$ colors
      $\{1,\dots,k\}$ distinctly to each vertex $v^f_i$, as well as to
      the corresponding surface which spans the boundary represented
      by the union of $T^i$ and $T_{f,\delta}$.  Correspondingly, each
      edge in the graph $G$ in $S_z$ is colored with one of the $k$
      colors.  Notice that, by design, each such common vertex (of the
      form $v^f$) in $G$ is connected to $k$ edges, one of each color,
      while each vertex $v^f_i$ is connected to a single edge that is
      colored $i$.  We show the following result: in the subgraph of
      $G$ induced by the common vertices with degree $k$, for every
      vertex there is another vertex to which there exist $k$
      edge-disjoint paths.  In fact, we state and prove this result as
      a new theorem on $k$-colored graphs in the next Subsection (see
      Theorem~\ref{thm:cozyarecomfy}).
    \item Thus we have that \[  \Hd^2(f(\bk)) \geq \Hd^2(\bk). \]
    \item To finish the proof, suppose that $\Hd^2(f(\bk)) =
      \Hd^2(\bk)$ and that some point $p\in\bk$ is not in
      $f(\bk)$. Since $f(\bk)$ is closed, we know that for some
      $\delta > 0$, $B(p,\delta)\cap f(\bk)=\emptyset$.
    \item \label{setK} A set $K$ such that 
      \begin{enumerate}
      \item every slice $K\cap H_z$ contains a separate path from each
        of the $v^f_i$ to a common point and
    \item $B(p,\delta/2)\cap K = \emptyset$ 
      \end{enumerate}
      satisfies \[\Hd^2(K) \geq \epsilon(p,\delta) + \Hd^2(\bk),\] for
      some $\epsilon(p,\delta) > 0$.
    \item Because $f(\bk)$ satisfies the requirements for set $K$ specified in Step \ref{setK} above,
      $\Hd^2(f(\bk)) > \Hd^2(\bk)$.
    \item This implies that $\bk\subset f(\bk)$.
    \item Now suppose that $f(\bk)\setminus\bk$ is not empty and $q\in
      f(\bk)\setminus\bk$. Because (a) $f(\bk)$ is closed and (b) $f$
      is bi-Lipschitz, we have that:
      \begin{enumerate}
      \item there is an $\epsilon > 0$ such that $B(q,\epsilon)\cap\bk = \emptyset$
      \item $\Hd^2(B(q,\epsilon)\cap f(\bk)) > 0$
      \end{enumerate}
      which would imply that $\Hd^2(f(\bk)) > \Hd^2(\bk)$.
\item Thus we conclude that $f(\bk)=\bk$.
  \end{enumerate}
\vspace*{-0.2in}
\end{proof}

\begin{rem}
  One can assume only that f is Lipschitz and, with a minor change in the proof
  get the same result with the exception that one can now only conclude
  that $\Hd^2(f(\bk)\setminus\bk) = 0$.
\end{rem}
\subsubsection{Cozy graphs are comfortable} \label{sssec:cozy}

For the sake of completeness, we start with several definitions on graphs (the basic ones are presented in typical texts on the subject \cite{Harary1969}).
We work with undirected graph $G=(V,E)$, with vertex set $V$ and edge set $E$.

\begin{defn}[$1$-factorization of graph] \label{def:factor}
  A $k$-factor (for $k \in \Z_{>0}$) of $G$ is a $k$-regular (i.e., each vertex has degree $k$) spanning subgraph of $G$.
  A $k$-factorization of $G$ is the partition of $E$ into disjoint $k$-factors.
  The graph $G$ is said to be $k$-factorable if it admits a $k$-factorization.
  In particular, a $1$-factor is a perfect matching of $G$.
  Finally, a $1$-factorization of a $k$-regular graph $G$ is an edge coloring with $k$ colors, i.e., an assignment of one of $k$ colors to each edge in $E$ such that no two edges incident to the same vertex have the same color.
\end{defn}

\noindent We now define the two properties of graphs that are required for our main body of work.
\begin{defn}[cozy graph] \label{def:cozy}
  An undirected graph $G=(V,E)$ is called \emph{$k$-cozy} if it is a $1$-factorable, $k$-regular, connected graph (such that the $k$ edges incident at each vertex $v \in V$ are assigned distinct  colors from $\{1,\dots,k\}$).
\end{defn}

\begin{defn}[comfortable graph] \label{def:comfy}
  An undirected graph $G=(V,E)$ is called \emph{$k$-comfortable} if for every vertex $v \in V$, there is another vertex $v' \in V$ such that there exist $k$ edge-disjoint $v$-$v'$ paths in $E$.
\end{defn}

\noindent We will prove that a $k$-cozy graph is also $k$-comfortable.
To this end, we prove several smaller results, which we use in the proof of the main theorem.
We need two additional definitions first.

\begin{defn}[spine, rib] \label{def:spinerib}
  For a set $U \subseteq V$ of vertices of graph $G=(V,E)$, and for $i=1,2$, let $E_i(U) \subseteq E$ be the set of edges of $G$ incident with exactly $i$ vertices in $U$.
  The edges in $E_1(U)$ are called \emph{spines} of $U$, and the edges in $E_2(U)$ are called \emph{ribs} of $U$.
\end{defn}

\begin{lemma} \label{lem:spineparity}
  For a set $U$ of vertices of a $k$-cozy graph $G$, the number of spines of any given color and $|U|$ have the same parity.
\end{lemma}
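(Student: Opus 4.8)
The plan is to reduce the statement to a one-line double count by first exploiting the structure forced by $k$-coziness. Fix a color $c \in \{1,\dots,k\}$ and let $E_c \subseteq E$ be the set of edges of color $c$. Since $G$ is $k$-regular and, by \cref{def:cozy}, the $k$ edges incident at each vertex receive the $k$ distinct colors $\{1,\dots,k\}$, every vertex of $G$ is incident with \emph{exactly one} edge of color $c$. In other words, each color class $E_c$ is a perfect matching ($1$-factor) of $G$. This is the only structural input I would need; connectivity and the full $1$-factorization play no role.

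Next I would fix a vertex set $U \subseteq V$ and, in the notation of \cref{def:spinerib}, write $s_c = |E_1(U) \cap E_c|$ for the number of spines of $U$ of color $c$ and $r_c = |E_2(U) \cap E_c|$ for the number of ribs of $U$ of color $c$. I count the incidences $(v,e)$ with $v \in U$, $e \in E_c$, and $v$ an endpoint of $e$, in two ways. On one hand, by the matching observation each $v \in U$ lies on exactly one edge of $E_c$, so this count equals $|U|$. On the other hand, each edge of $E_c$ meeting $U$ is either a spine of $U$ (exactly one endpoint in $U$, contributing $1$) or a rib of $U$ (both endpoints in $U$, contributing $2$), while edges of $E_c$ disjoint from $U$ contribute nothing; so the count equals $s_c + 2r_c$. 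Equating the two expressions gives $|U| = s_c + 2r_c$, hence $s_c \equiv |U| \pmod 2$, which is exactly the claim.

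There is no genuine obstacle in this lemma: the entire content is the observation that in a $k$-cozy graph each color class saturates every vertex, turning ``spine of color $c$'' into ``unmatched-across-$U$ edge of the perfect matching $E_c$'' and making the parity assertion a transparent handshake-style count. The one thing to state carefully is the deduction of Step~1 from \cref{def:cozy} (distinct colors at each of the $k$ edges of a $k$-regular vertex force each color to appear exactly once there), after which the rest is routine.
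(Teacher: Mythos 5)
Your proof is correct and follows essentially the same route as the paper's: both hinge on the observation that in a $k$-cozy graph every vertex is incident to exactly one edge of a given color, yielding the identity $|U| = s + 2r$ and hence the parity claim. Your write-up merely makes the double-counting explicit where the paper states it in one line.
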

\begin{proof}
  For any given color, let $s$ and $r$ be the number of spines and ribs of that color, respectively, for $U$.
  Since $G$ is $k$-cozy, every vertex of $G$ is incident to a unique edge of a given color.
  Hence we must have $|U|=s + 2r$, and the result follows immediately.
\end{proof}

\begin{cor} \label{cor:evenspines}
  Let $U$ be a set of vertices of a $k$-cozy graph $G$ with fewer than $k$ spines.
  Then the number of spines of $U$ of any given color is even.
\end{cor}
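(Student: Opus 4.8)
The plan is to deduce this directly from \cref{lem:spineparity}. That lemma tells us that for \emph{every} color $c \in \{1,\dots,k\}$, the number of spines of $U$ of color $c$ has the same parity as $|U|$. In particular, all $k$ colors produce spine-counts of one common parity, namely that of $|U|$.

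Next I would rule out the case that $|U|$ is odd. Since $G$ is $k$-cozy it is $1$-factored, so every spine carries exactly one of the $k$ colors, and hence the total number of spines of $U$ is the sum over $c = 1,\dots,k$ of the number of spines of color $c$. If $|U|$ were odd, \cref{lem:spineparity} would force the number of spines of each color to be odd, hence at least $1$; summing over the $k$ colors would give $U$ at least $k$ spines, contradicting the hypothesis that $U$ has fewer than $k$ spines. Therefore $|U|$ is even.

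Finally, applying \cref{lem:spineparity} once more with $|U|$ even shows that, for any given color, the number of spines of $U$ of that color has the same parity as $|U|$ and is therefore even. This is exactly the claim. There is essentially no obstacle here: the result is a one-line consequence of the preceding lemma together with the observation that spines are partitioned by color, so the per-color counts sum to the total spine count. The only point to state carefully is precisely that partition-by-color fact, which is immediate from $1$-factorability.
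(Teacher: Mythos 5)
Your argument is correct and matches the paper's proof: both rule out $|U|$ being odd by noting that Lemma \ref{lem:spineparity} would then force every color to contribute at least one spine, giving at least $k$ spines and contradicting the hypothesis. The only cosmetic difference is that you frame the contradiction around the parity of $|U|$ directly, while the paper starts from the assumption that some color has an odd spine count; the content is identical.
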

\begin{proof}
  If there were an odd number of spines of some color, then $|U|$ must be odd due to Lemma \ref{lem:spineparity}.
  But if $|U|$ is odd, then the number of spines of every color must also be odd, which implies $U$ must have a spine of every color.
  Hence the total number of spines of $U$ is at least $k$, giving a contradiction.
\end{proof}

\begin{defn}[edge connectivity]
  The \emph{edge connectivity} of graph $G$ is the minimum number of edges whose removal disconnects $G$.
  We denote the edge connectivity of $G$ by $\ec{G}$.
\end{defn}

\begin{cor} \label{cor:edgeconeven}
  Let $G$ be a $k$-cozy graph whose edge connectivity $\ec{G} < k$.
  Then $\ec{G}$ is even.
\end{cor}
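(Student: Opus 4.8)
The plan is to realize $\ec{G}$ as the number of spines of one side of a minimum edge cut, and then apply \cref{cor:evenspines} color by color.

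First I would choose a set $F \subseteq E$ of edges with $|F| = \ec{G}$ whose removal disconnects $G$. Being of minimum size, $F$ is in particular minimal with respect to inclusion, so $G - F$ has exactly two connected components: indeed, for any $e \in F$, the graph $G - (F \setminus \{e\})$ is connected by minimality, and adding a single edge to $G - F$ cannot merge more than two components, so $G - F$ has at most—hence exactly—two components $C_1, C_2$. Let $U = V(C_1)$ and $W = V(C_2) = V \setminus U$. Every edge of $F$ must join a vertex of $U$ to a vertex of $W$: an edge of $F$ with both endpoints inside $C_1$ (or both inside $C_2$) could be removed from $F$ and still leave $C_1$ and $C_2$ separated, contradicting minimality. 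Conversely, every edge of $G$ joining $U$ to $W$ lies in $F$, since otherwise $C_1$ and $C_2$ would not be distinct components of $G - F$. Hence $F = E_1(U)$, the set of spines of $U$, and in particular the number of spines of $U$ equals $\ec{G}$.

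Next, by hypothesis $\ec{G} < k$, so $U$ is a vertex set with fewer than $k$ spines, and \cref{cor:evenspines} applies: for each color $c \in \{1, \dots, k\}$, the number of spines of $U$ of color $c$ is even. Since $G$ is $k$-cozy, each edge—and in particular each spine of $U$—carries exactly one of the $k$ colors, so the spines of $U$ partition according to color. Summing, $\ec{G} = |E_1(U)|$ is a sum of $k$ even integers, hence even, which is the claim.

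The only nonroutine point is the identification of a minimum edge cut with the edge boundary $E_1(U)$ of one side of a two-component split, which is the standard structural fact about minimal edge cuts recalled above; once that is in hand the corollary is an immediate consequence of \cref{cor:evenspines} together with the fact that spines come in $k$ colors. I do not expect any genuine obstacle here—the substance of the argument lives entirely in \cref{lem:spineparity} and \cref{cor:evenspines}, which are already established.
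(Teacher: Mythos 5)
Your proposal is correct and follows essentially the same route as the paper: take a minimum edge cut, identify it with the spine set $E_1(U)$ of one side of the resulting two-component split, and apply \cref{cor:evenspines} color by color. You simply make explicit the structural facts (exactly two components, cut equals edge boundary) that the paper's proof takes for granted.
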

\begin{proof}
  Let the edge connectivity of $G$ be $\ec{G} = c < k$.
  Let $D \subset E$ be a subset of edges with $|D|=c$ whose removal disconnects $G$ into two components $G_1$ and $G_2$.
  The edges in $D$ are spines of the sets of vertices of $G_i$ for $i=1,2$.
  Since $c < k$, $c$ must be even by Corollary \ref{cor:evenspines}.
\end{proof}

\begin{lemma} \label{lem:cpntedgecon}
  Let $G=(V,E)$ be $k$-cozy and $\ec{G} = 2 \ell$.
  Let $D \subset E$ be a set of edges with $|D| = 2\ell$ whose removal disconnects $G$ into two components $G_1$ and $G_2$.
  Then $\ec{G_1} \geq \ell$ and $\ec{G_2} \geq \ell$.
\end{lemma}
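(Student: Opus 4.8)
The plan is to argue by contradiction, reducing everything to counting edges that cross cuts and then invoking the hypothesis $\ec{G} = 2\ell$. By the evident symmetry between $G_1$ and $G_2$ it suffices to establish $\ec{G_1} \geq \ell$. So suppose instead that $\ec{G_1} < \ell$. Then there is a partition of the vertex set $V_1$ of $G_1$ into two nonempty parts $A$ and $B$ such that the number $d$ of edges of $G_1$ joining $A$ to $B$ satisfies $d < \ell$; since $A, B \subseteq V_1$, these are precisely the edges of $G$ with one endpoint in $A$ and one endpoint in $B$.

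Next I would track where the $2\ell$ edges of $D$ attach. Each edge of $D$ joins a vertex of $V_1 = A \cup B$ to a vertex of $G_2$; let $a$ and $b$ denote the numbers of edges of $D$ meeting $A$ and $B$ respectively, so that $a + b = |D| = 2\ell$. Now view $A$ as a vertex subset of all of $G$: since $G_2$ is nonempty, $A$ is a nonempty proper subset of $V$, and the edges of $G$ with exactly one endpoint in $A$ are the $a$ edges of $D$ at $A$ together with the $d$ edges from $A$ to $B$. This is an edge cut of $G$, hence $a + d \geq \ec{G} = 2\ell$. The identical argument with $B$ in place of $A$ gives $b + d \geq 2\ell$. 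Adding these and substituting $a + b = 2\ell$ yields $2d \geq 2\ell$, i.e.\ $d \geq \ell$, contradicting $d < \ell$. Swapping the roles of $G_1$ and $G_2$ gives $\ec{G_2} \geq \ell$ as well, which completes the argument.

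I do not expect a genuine obstacle here: the $k$-coziness of $G$ plays no role beyond what is already encoded in the hypothesis $\ec{G} = 2\ell$ together with the connectivity implicit in speaking of the two components $G_1$ and $G_2$ (indeed the argument incidentally shows that $G_1$ is itself connected). The one point requiring a little care is that a small cut \emph{inside} $G_1$ must be promoted to a cut of the \emph{whole} graph with the $D$-edges correctly apportioned to the two sides --- this is the bookkeeping behind the quantities $a + d$ and $b + d$ --- and it is crucial that $a + b$ is known \emph{exactly}, not merely bounded below, since that exact value is what makes the two lower bounds collide to produce the contradiction.
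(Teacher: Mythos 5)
Your proof is correct and is essentially the paper's argument: both promote a small cut inside $G_1$ to a cut of the whole graph by apportioning the $2\ell$ edges of $D$ between the two sides and then invoking $\ec{G}=2\ell$. The only cosmetic difference is that the paper augments the internal cut with the smaller of the two parts of $D$ (of size $\leq\ell$) to get $2\ell\leq\ec{G_1}+\ell$ directly, whereas you write down both global cut inequalities and add them --- the same count, organized differently.
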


  \noindent\textit{Proof.}
  Let $D'$ be a set of $\ec{G_1}$ edges whose removal disconnects $G_1$ into components $H_1$ and $H_2$.
  
  \begin{wrapfigure}{l}{2in}
    \centering
    \vspace*{-0.1in}
    \includegraphics[scale=0.6]{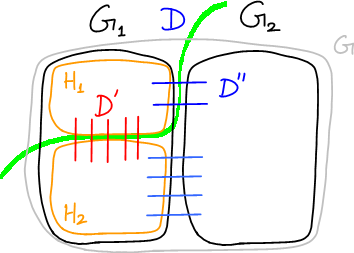}
    \vspace*{-0.3in}
  \end{wrapfigure}
  \noindent
  Also let $D_i \subset D$ for $i=1,2$ be the set of edges in $D$ that join vertices in $H_i$ to vertices in $G_2$.
  Notice that $D_1 \cup D_2 = D$.
  Finally, let $D''$ be the smaller of $D_1$ and $D_2$.
  Since $2 \ell = |D| = |D_1| + |D_2|$, we have $|D''| \leq \ell$.
  Here, $D' \cup D'' \subset E$ is a set of edges whose removal disconnects $G$.
  Hence we have $2 \ell \leq | D' \cup D'' | = |D'| + |D''|  = \ec{G_1} + \ell$, which implies $\ec{G_1} \geq \ell$.
  Reversing the roles of $G_1$ and $G_2$ gives $\ec{G_2} \geq \ell$.
  \qed

\vspace*{0.2in}

\begin{cor} \label{cor:edgdisjpaths}
  Let $G, D, G_1, G_2$ be as defined in Lemma \ref{lem:cpntedgecon}.
  Let $V_1, V_2$ be multisets of vertices of one component (either $G_1$ or $G_2$) with $|V_1| = |V_2| = q \leq \ell$.
  Then there are $q$ edge-disjoint paths in $G$ connecting $V_1$ and $V_2$ with multiplicities preserved, such that every vertex of $V_1$ and of $V_2$ is the end point of some such path.
\end{cor}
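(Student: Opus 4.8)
The plan is to reduce the statement to a single maximum-flow / minimum-cut computation (equivalently, an application of Menger's theorem for edge-disjoint paths) inside one of the two components. Without loss of generality, assume $V_1$ and $V_2$ are multisets of vertices of $G_1$, the case of $G_2$ being identical. The only structural fact I will use is the conclusion of Lemma~\ref{lem:cpntedgecon}, namely $\ec{G_1} \geq \ell \geq q$ (and if $q=0$ there is nothing to prove, so assume $q\geq 1$).

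First I would form an auxiliary graph $G_1^{+}$ by adjoining a source $s$ and a sink $t$ to $G_1$: for each vertex $v$ of $G_1$, put as many parallel edges $sv$ as the multiplicity of $v$ in $V_1$, and as many parallel edges $vt$ as the multiplicity of $v$ in $V_2$; assign capacity $1$ to every edge of $G_1^{+}$. Any family of $q$ edge-disjoint $s$--$t$ paths in $G_1^{+}$ restricts, after deleting the first and last edge of each path, to $q$ edge-disjoint paths in $G_1 \subseteq G$ joining $V_1$ to $V_2$; and since the total capacity of the edges incident to $s$ is exactly $q$, a flow of value $q$ must saturate every $sv$-edge and every $vt$-edge, so each occurrence of a vertex in $V_1$ (resp.\ $V_2$) is the endpoint of exactly one path. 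Thus it suffices to exhibit an integral $s$--$t$ flow of value $q$ in $G_1^{+}$, and by max-flow/min-cut it suffices to show every $s$--$t$ cut has capacity at least $q$. For a cut $(X,\bar X)$ with $s\in X$: if $X\cap V(G_1)=\emptyset$ the cut is exactly the $q$ edges at $s$; if $V(G_1)\subseteq X$ it is exactly the $q$ edges at $t$; otherwise both $X\cap V(G_1)$ and $\bar X\cap V(G_1)$ are nonempty, so the $G_1$-edges crossing the partition form an edge cut of the connected graph $G_1$ and hence number at least $\ec{G_1}\geq\ell\geq q$, the contributions of the remaining $s$- and $t$-edges only adding to this. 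Therefore the maximum flow equals $q$ (the cut $X=\{s\}$ shows it is no larger), and decomposing an integral maximum flow into paths and cycles, then discarding the cycles, produces the required $q$ edge-disjoint paths.

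I do not anticipate a real obstacle: the entire content is the passage to a network-flow formulation and the three-case minimum-cut check, both of which are routine once Lemma~\ref{lem:cpntedgecon} supplies $\ec{G_1}\geq\ell$. The only place that warrants a little care is the bookkeeping behind ``with multiplicities preserved,'' and this is automatic because a flow of value $q = |V_1| = |V_2|$ is forced to saturate every source and sink edge.
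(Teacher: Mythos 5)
Your proposal is correct and follows essentially the same route as the paper: append a source $s$ and sink $t$ with parallel edges encoding the multiplicities of $V_1$ and $V_2$, and invoke max-flow/min-cut to extract $q$ edge-disjoint paths. Your write-up is in fact somewhat more careful than the paper's, since you explicitly verify the three-case cut bound using $\ec{G_1}\geq\ell\geq q$ from Lemma \ref{lem:cpntedgecon} and note that saturation of the source and sink edges is what forces multiplicities to be preserved.
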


\begin{proof}
  We append a source and sink vertex $s$ and $t$, and attach $s$ to each vertex in $V_1$ and $t$ to each vertex in $V_2$, with multiple edges to account for multiplicities of the vertices.
  Let this new pseudograph (due to multiplicities of some nodes and edges) be called $G'$.
  Then $\ec{G'} = q$, and by the max flow-min cut theorem (see \cite[Theorem 6.7]{AMO1993}), $G'$ has $q$ edge-disjoint $s$-$t$ paths.
  Removing $s$ and $t$ from $G'$ provides the $q$ edge-disjoint paths connecting $V_1$ and $V_2$ in $G$.
\end{proof}

\noindent We need one more construction related to spines, which we employ in the proof of the main result in this subsection.
\begin{defn}[special edges and knitting] \label{def:knit}
  Let $U \subset V$ be a set of vertices of a $k$-cozy graph $G$ with fewer than $k$ spines.
  For each of the $k$ colors, there must exist an even number of spines of that color by Corollary \ref{cor:evenspines}.
  We partition into pairs the vertices in $U$ that are end points of spines of a given color.
  In the subgraph $G(U)$ of $G$ induced by $U$, we join each such pair of vertices with a new edge of the same color.
  These new edges are called \emph{special edges}.
  Repeating this process for every color gives us a supergraph of $G(U)$, which we refer to as a \emph{knitting} of $G(U)$.
  Any knitting of $G(U)$ is immediately seen to be $k$-cozy.
\end{defn}

\noindent We now present the main result related to cozy and comfortable graphs.

\begin{thm}[Cozy graphs are comfortable] \label{thm:cozyarecomfy}
  For $k \geq 0$, every $k$-cozy graph is $k$-comfortable.
\end{thm}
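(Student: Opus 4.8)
The plan is to prove the theorem by strong induction on the number of vertices $n=|V|$ of the $k$-cozy graph $G$. For a given vertex $v$ we will always produce a vertex $v'\neq v$ together with $k$ edge-disjoint $v$–$v'$ walks; since edge-disjoint walks contain edge-disjoint paths, this gives the $k$ edge-disjoint $v$–$v'$ paths required by the definition of $k$-comfortable, and since $G$ is $k$-regular this is best possible. The case $k=0$ (a single vertex) is trivial, so assume $k\geq 1$, in which case $n\geq k+1\geq 2$.

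First the easy case, which also serves as the base of the recursion. If $\ec{G}\geq k$, then $G$ is $k$-edge-connected; picking any $v'\neq v$, Menger's theorem (equivalently the max-flow–min-cut theorem, as already invoked in Corollary \ref{cor:edgdisjpaths}) supplies $k$ edge-disjoint $v$–$v'$ paths. So suppose instead $\ec{G}=c<k$. By Corollary \ref{cor:edgeconeven}, $c=2\ell$ is even; fix a minimum edge cut $D$, $|D|=2\ell$, disconnecting $G$ into its two components $G_1$ (on vertex set $V_1$) and $G_2$ (on $V_2$). The edges of $D$ are exactly the spines of $V_1$, and since $2\ell<k$ there is an even number of spines of each color by Corollary \ref{cor:evenspines}, so a knitting of $G(V_1)=G_1$ is defined; it has exactly $\ell$ special edges. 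Fix $v$; by symmetry assume $v\in V_1$, and let $G_1'$ be a knitting of $G_1$. Then $G_1'$ is $k$-cozy (Definition \ref{def:knit}) with $|V_1|<n$ vertices, so by the inductive hypothesis it is $k$-comfortable: there are $v'\in V_1$, $v'\neq v$, and $k$ edge-disjoint $v$–$v'$ paths $P_1,\dots,P_k$ in $G_1'$.

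It remains to convert these into $k$ edge-disjoint $v$–$v'$ paths lying in $G$ itself, which is the heart of the argument. Write each $P_i$ as an alternating sequence of \emph{segments} (maximal subpaths using only edges of $G_1\subseteq G$, possibly of length zero) and special edges of the knitting, and let $\mathcal F$ be the set of special edges used, so $|\mathcal F|\leq\ell$. Orienting each $P_i$ from $v$ to $v'$ distinguishes, for each $f\in\mathcal F$, a $v$-ward endpoint $\mathrm{head}(f)$ and a $v'$-ward endpoint $\mathrm{tail}(f)$; each is a spine-endpoint in $V_1$ of color $\mathrm{col}(f)$, with associated spine $e_{\mathrm{head}(f)},e_{\mathrm{tail}(f)}\in D$ and far endpoints $w_{\mathrm{head}(f)},w_{\mathrm{tail}(f)}\in V_2$. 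One checks that these $2|\mathcal F|$ spines are pairwise distinct (distinct colors are obvious, and within one color the knitting pairs disjoint vertices). Apply Corollary \ref{cor:edgdisjpaths} inside $G_2$ to the multisets $\{w_{\mathrm{head}(f)}\}_{f\in\mathcal F}$ and $\{w_{\mathrm{tail}(f)}\}_{f\in\mathcal F}$, of common size $|\mathcal F|\leq\ell$: this yields $|\mathcal F|$ edge-disjoint paths in $G_2$ realizing some bijection $\rho$ of $\mathcal F$, namely a path $Q_f$ from $w_{\mathrm{head}(f)}$ to $w_{\mathrm{tail}(\rho(f))}$. Replace each $f\in\mathcal F$ by the superedge $\hat f:=e_{\mathrm{head}(f)}\cup Q_f\cup e_{\mathrm{tail}(\rho(f))}$, a walk in $G$ from $\mathrm{head}(f)$ to $\mathrm{tail}(\rho(f))$; the $\hat f$ are pairwise edge-disjoint and avoid $E(G_1)$. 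In the resulting subgraph of $G$, the rule for gluing segments has merely been post-composed with $\rho$ on the $\mathrm{tail}$-side, and every $\hat f$ still joins a ``$\mathrm{head}$'' segment-endpoint to a ``$\mathrm{tail}$'' segment-startpoint. Consequently, tracing from $v$ through alternating segments and superedges always moves strictly toward the $v'$-ward side, never repeats a segment or superedge, and terminates at $v'$; there are exactly $k$ such traces (one per $P_i$), they are edge-disjoint, and they exhaust all segments and superedges. This produces $k$ edge-disjoint $v$–$v'$ walks in $G$, completing the induction.

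The main obstacle is exactly this last reduction. Corollary \ref{cor:edgdisjpaths} connects the spine-endpoints of $V_2$ only as multisets, with no control over which $\mathrm{head}$ is joined to which $\mathrm{tail}$, so a priori the rewiring could splice the inductive paths $P_i$ into closed loops or into walks with the wrong endpoints, destroying the count $k$. The point that rescues the argument is that, once the $P_i$ are oriented, \emph{every} superedge joins a $\mathrm{head}$ to a $\mathrm{tail}$ no matter what the matching $\rho$ is, so the oriented segment structure is preserved and loop formation is impossible. The remaining items — that a knitting is $k$-cozy on fewer vertices (so the induction is well-founded), that the $2|\mathcal F|$ spines involved are distinct, and that zero-length segments cause no trouble — I expect to be routine bookkeeping.
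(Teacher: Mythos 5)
Your proof is correct and follows essentially the same route as the paper's: a minimum (necessarily even) cut, knitting of $G_1$, induction/minimality to obtain $k$ edge-disjoint paths in the knitted graph, rerouting of each special edge through $G_2$ via Corollary \ref{cor:edgdisjpaths}, and an orientation/successor argument to reassemble $k$ edge-disjoint $v$-$v'$ walks. Your strong induction on $|V|$ with $k$ fixed is simply a cleaner packaging of the paper's induction on $k$ combined with a vertex-minimal counterexample, and your functional-graph trace (each object having in- and out-degree at most one, so traces from the $k$ in-degree-zero initial segments must end at the $k$ out-degree-zero final segments) is the paper's auxiliary graph $F$ in different clothing; the only slight overstatement, that the traces ``exhaust all segments and superedges,'' is harmless since cycles of leftover material may remain but are not needed.
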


\begin{proof}
  We do induction on $k$, noting that a $0$-cozy graph is the trivial graph (a single vertex).
  Also, the only $1$-cozy graph is $K_2$, the complete graph on $2$ vertices, which is a pair of vertices connected by a single edge.
  For $k=2$, we observe that a $2$-cozy graph must be an even cycle (as we cannot assign two colors in an alternating fashion to edges along an odd cycle such that each vertex is incident to two edges of the two colors).
  Hence every pair of vertices in a $2$-cozy graph has two edge-disjoint paths connecting them, showing the result holds for $k=2$.

  We assume the result holds for $k=2r$ and show it must then hold for $k=2r + t$ for $t \in \{1,2\}$.
  Assume every $2r$-cozy graph is also $2r$-comfortable, and let $G$ be a $(2r+t)$-cozy graph.
  If $\ec{G}=2r+t$, then there exist $2r+t$ edge-disjoint paths between every pair of distinct vertices in $G$ (again, by the max flow-min cut theorem, as seen in Corollary \ref{cor:edgdisjpaths}).
  Hence any counterexample must have $\ec{G} < 2r+t$.
  Further, by Corollary \ref{cor:edgeconeven}, any such counterexample must have $\ec{G}=2\ell$ for $\ell \leq r$.

  Let $G=(V,E)$ be such a counterexample with the smallest number of vertices.
  Let $D \subset E$ be a set of $2\ell$ edges whose removal disconnects $G$ into components $G_1=(V_1,E_1)$ and $G_2=(V_2,E_2)$.
  Let $v \in V$ be a vertex such that $v \in V_1$.
  We knit $G(V_1)$, as described in Definition \ref{def:knit}, to create a $(2r+t)$-cozy graph $H$ with fewer vertices than $G$.
  Hence $H$ has a smaller number of vertices than $G$, and is therefore $(2r+t)$-comfortable.

  \medskip
  Let $v'$ be a vertex in $H$ such that there exist $(2r+t)$ edge-disjoint $v$-$v'$ paths $P_1, \dots, P_{2r+t}$ in $H$.
  We partition these paths into special edges $e_1, \dots, e_q$ and maximum length subpaths $\{Q_j\}$ that do not contain special edges.
  Let each special edge be of the form $e_i = \{u_i,w_i\}$, where $u_i$ is encountered first along a $v$-$v'$ path $P_h$.
  Let $\{u_i,y_i\}$ and $\{z_i,w_i\}$ be the spines of $D$ of the same color as $e_i$.
  We consider the multisets of vertices $Y = \{y_i\}_{i=1}^q$ and $Z = \{z_i\}_{i=1}^q$, as defined by these special edges.
  Notice that vertices in $Y$ and $Z$ belong to $V_2$ (i.e., to component $G_2$).
  By Corollary \ref{cor:edgdisjpaths}, there exist $q \leq \ell$ edge-disjoint $Y$-$Z$ paths.
  Observe that all these paths are located within $G_2$, as $\ec{G_2} \geq \ell$ by Lemma \ref{lem:cpntedgecon}.
  We extend each of these $q$ paths in $G_2$ of the form $y_i,\dots,z_{\sigma(i)}$ (for some index function $\sigma(i)$ which takes care of multiplicities) to $v$-$v'$ paths in $G$ of the form $R_i = u_i,y_i,\dots,z_{\sigma(i)},w_{\sigma(i)}$.
  The edge $\{u_i,y_i\}$ is the only spine of $V_1$ of its color, and hence if $\{u_i,y_i\}$ and $\{u_l,y_l\}$ are the same edge (on account of multiplicities), it must hold that $i=l$.
  The same result holds for the spine at the other end $\{z_{\sigma(i)},w_{\sigma(i)}\}$.
  Hence the $\{R_i\}$ paths are edge-disjoint among each other, and also, by definition, from the $\{Q_j\}$ paths.

  \medskip
  Finally, to certify the existence of $2r + t$ edge-disjoint $v$-$v'$ paths in $G$, we form a new graph $F$ whose \emph{vertices} are the $\{Q_j\}$ and $\{R_i\}$ paths.
  Two vertices in $F$ are joined by an edge in $F$ if the end vertex of the first path in $G$ is the start vertex of the other path (in $G$).
  If a path $Q_j$ is already a $v$-$v'$ path in $G$, we add two vertices corresponding to $Q_j$ in $F$.
  Observe that in the new graph $F$, every vertex has degree $2$ except for those which correspond to paths in $G$ whose start vertex is $v$ or whose end vertex is $v'$.
  All vertices in $F$ of the latter type have degree $1$.
  Hence every connected component of $F$ is a path or a cycle.
  Further, we observe there are $2r + t$ vertices in $F$ which correspond to paths in $\{Q_j\} \cup \{R_i\}$ whose start vertex (in $G$) is $v$.
  There are $2r + t$ \emph{additional} vertices in $F$ which correspond to paths in $\{Q_j\} \cup \{R_i\}$ whose end vertex (in $G$) is $v'$.
  Due to the way $F$ is constructed, these two sets of vertices must be end vertices of $2r+t$ vertex-disjoint paths in $F$.
  These $2r+t$ paths in $F$ correspond to the desired $2r+t$ edge-disjoint $v$-$v'$ paths in $G$, whose existence contradicts $G$ being a counterexample to the result in the theorem.
  Hence every $(2r+t)$-cozy graph is also $(2r+t)$-comfortable.
\end{proof}

\subsection{Regular points have Book-like tangent cones}
\label{sec:regular-books}

We now show a nice property of the median $\mdn{T}_\lambda$ of a set of smooth $1$-currents $\{T_i\}_{i=1}^\nCur$ in $\R^3$ with shared boundaries  under the condition that all minimal surfaces $S_i$'s spanned by the median $\mdn{T}_\lambda$ and $T_i$'s are smooth.
Moreover, according to Krummel \cite{BK2014}, as $\mdn{T}_\lambda$ is the intersection of all smooth minimal surfaces, it is also smooth.

Before stating the general result, we will begin with a simple case.
Let $Cyl(r,h)$ be a cylinder with radius $r$ and height $h$ and $\{H_i\}_{i=1}^\nCur$ be $\nCur$ half hyperplanes with shared boundary at the central axis $L$ of $Cyl(r,h)$.
We assume the unit vectors orthogonal to $L$ for each hyperplane add up to $0$.
We can prove by the coarea formula and the properties of the geometric median for coplanar points that the median of the input currents $H_i \cap \partial Cyl(r,h)$ has to be the central axis of $Cyl(r,h)$.
We call all the hyperplanes inside $Cyl(r,h)$ a book.  

Next we consider the general case where there are $\nCur$ smooth $1$-currents in $\R^3$. The following is an outline of the proof. In the neighborhood around every point $\vx \in \supp(\mdn{T}_\lambda - \partial \mdn{T}_\lambda)$, the median and minimal surfaces will be well approximated by their tangent cones, which are planes.
We will assume that, in order to minimize the sum of flat norms distances, the tangent cones of the minimal surfaces have to form a book, and the tangent cone for the median is where the pages meet.
Otherwise, we may find another $\mdn{T}^{\mbox{new}}_\lambda$ which minimizes the page areas.
Even though, it will add extra areas by connecting things together on the boundary of $Cyl(r,\delta)$, we will show that the extra area will not exceed the total decrease in areas from the pages.
This result is going to be proved using the following steps:

\begin{enumerate}
  \item  Assume the tangent cones of $S_i$'s do not form a book.
    Then we can replace $\mdn{T}_\lambda$ and $S_i$'s around $x$ with their tangent cones $L_{\vx}$ and $T_\vx S_i$'s.
    The error, $E_1$, of the area difference between $S_i$'s and $T_\vx S_i$'s will be very small in the neighborhood of $x$ because of smoothness (see \cref{fig-tangcone}).

      \begin{figure}[htp!] 
        \centering
        \scalebox{0.5}{
          \input{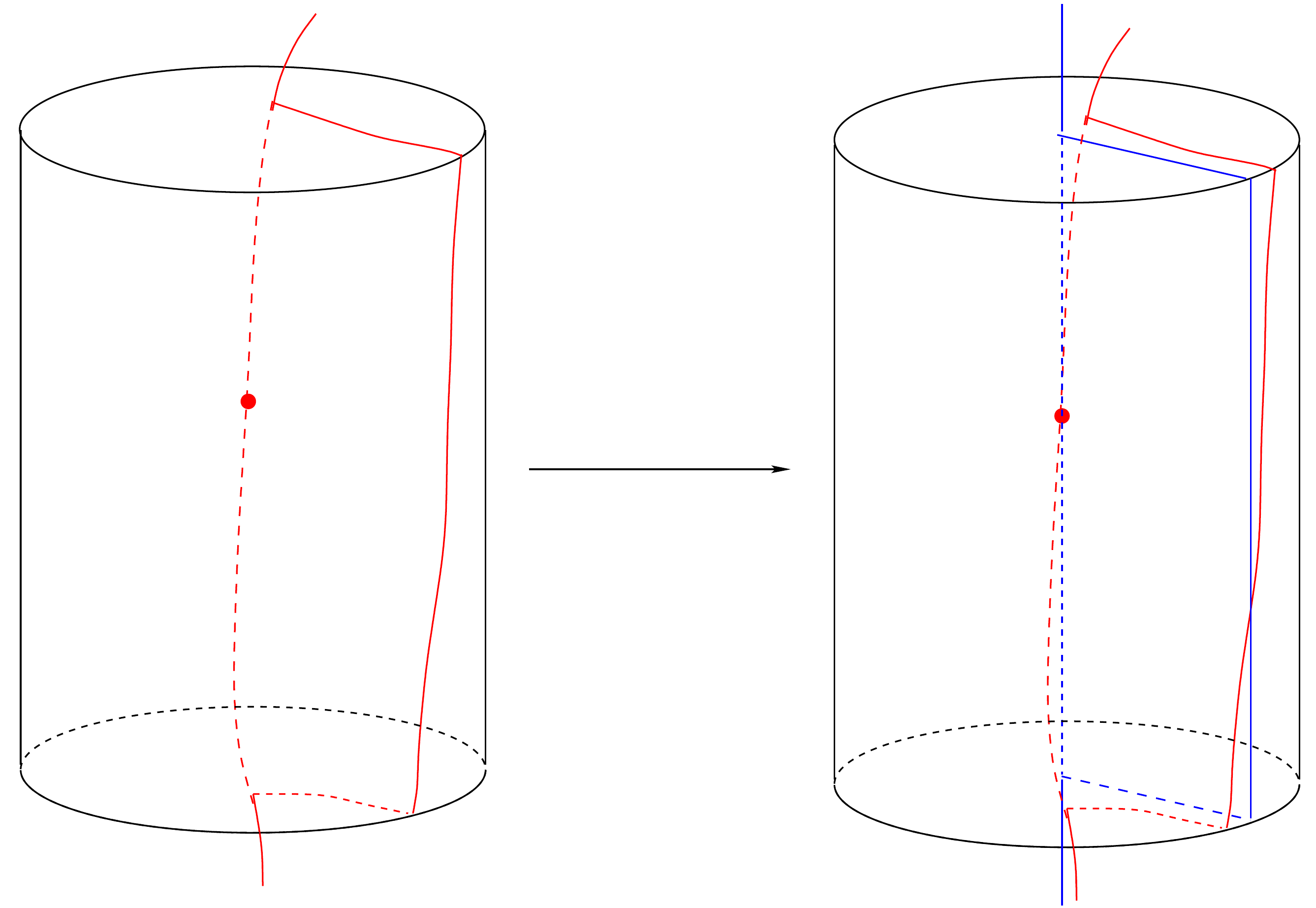_t}}
        \caption{\label{fig-tangcone} Tangent cone inside some cylinder: As assumed, both $\mdn{T}_\lambda$ and $S_i$'s are smooth, and replacing them with their tangent cones will not yield a big difference.
          In fact, the area difference between $S_i$'s and $T_\vx S_i$' in the neighborhood can be proven to be of the order $o(r) \delta$.}
      \end{figure}

    \item  \label{step-moveLx} Under our assumption that $T_\vx S_i$'s do not form a book, we can move $L_{\vx}$ to some other $L'_{\vx}$ such that after the movement, the sum of the areas of the pages will decrease.
      Denote the new pages as $T_\vx S'_i$'s.
      The improvement of this step is of the order $r\delta$. 

    \item The change in Step \ref{step-moveLx} defines a new median $\mdn{T}_\lambda^{\mbox{new}}$ in the following way: 
      \begin{itemize}
        \item $\mdn{T}^{\mbox{new}}_\lambda = \mdn{T}_\lambda$ outside the $Cyl(r,\delta)$. 
        \item On the top and bottom of $Cyl(r,\delta)$, $\mdn{T}^{\mbox{new}}_{\lambda}$ are the line segments connecting $\mdn{T}_{\lambda} \cap Cyl(r,\delta)$ and $L'_{\vx} \cap Cyl(r,\delta)$ on the top and bottom of $Cyl(r,\delta)$, respectively (see \cref{fig-newmdn}).
          \begin{figure}[htp!]
            \centering
            \scalebox{0.5}{
              \input{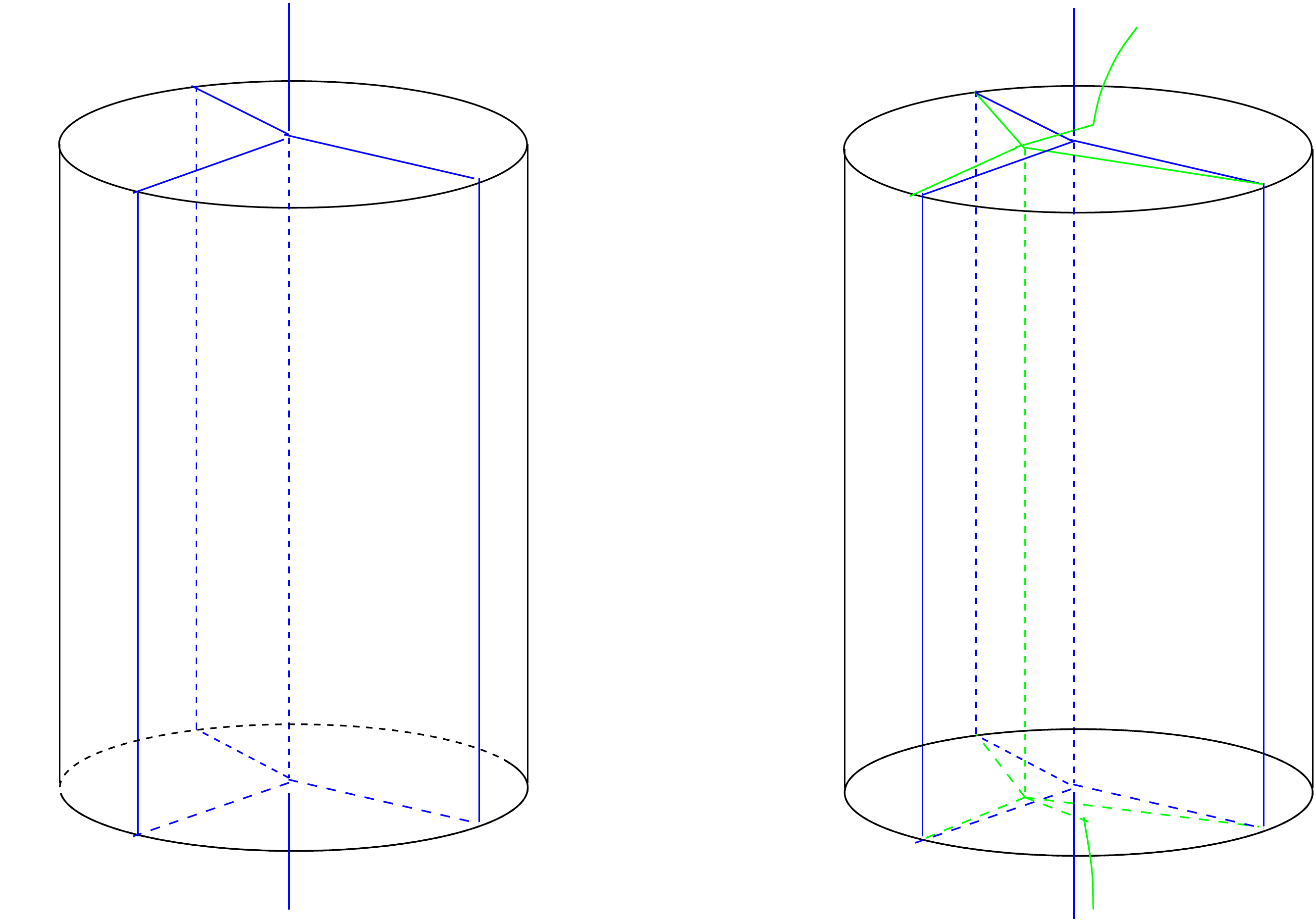_t}}
            \caption{New median \label {fig-newmdn}}
          \end{figure}

        \item $\mdn{T}^{\mbox{new}}_\lambda = L'_{\vx}$ inside $Cyl(r,\delta)$. 
      \end{itemize} 
\end{enumerate}

Compared to $\mdn{T}_\lambda$, $\mdn{T}^{\mbox{new}}_\lambda$ improves the flat norm inside $Cyl(r,\delta)$.
However, it does add extra costs on the top, bottom, and side of $Cyl(r,\delta)$.
If we can show the improvement is greater than the additional cost, then $\mdn{T}^{\mbox{new}}_\lambda$ will be a better choice than $\mdn{T}_\lambda$ for the median.
In particular, the flat norm is calculated by finding a minimizer $S$, and if we are able to construct a different collection of $\{S'_i\}$, whose sum of the areas is still smaller than the flat norms when using $\mdn{T}_\lambda$, then $S$ would not have been a minimizer in the first place.\\

And the way we pick $\{S'_i\}$ is the following:
  \begin{enumerate}
    \item $S'_i = S_i$ outside $Cyl(r,\delta)$. 
    \item $S'_i = T_\vx S'_i$ inside $Cyl(r,\delta)$, where the orientation on $S'_i$ is induced by $S_i,$
    \item On the top and bottom of $Cyl(r,\delta)$, $S'_i$ is defined to be the region generated by swiping $S_i$ to $T_\vx S'_i$ on the top and bottom of $Cyl(r,\delta)$ along $\mdn{T}^{\mbox{new}}_{\lambda}$ and the corresponding arc.  (see \cref{fig-newmdndtls}.)
    \item On the side of the $Cyl(x,\delta)$, $S'_i$ is the region caused by swiping $S_i$ to $T_\vx S'_i$ on the side (see \cref{fig-newmdn}).

      \begin{figure}[htp!]
        \centering
        \scalebox{0.5}{
          \input{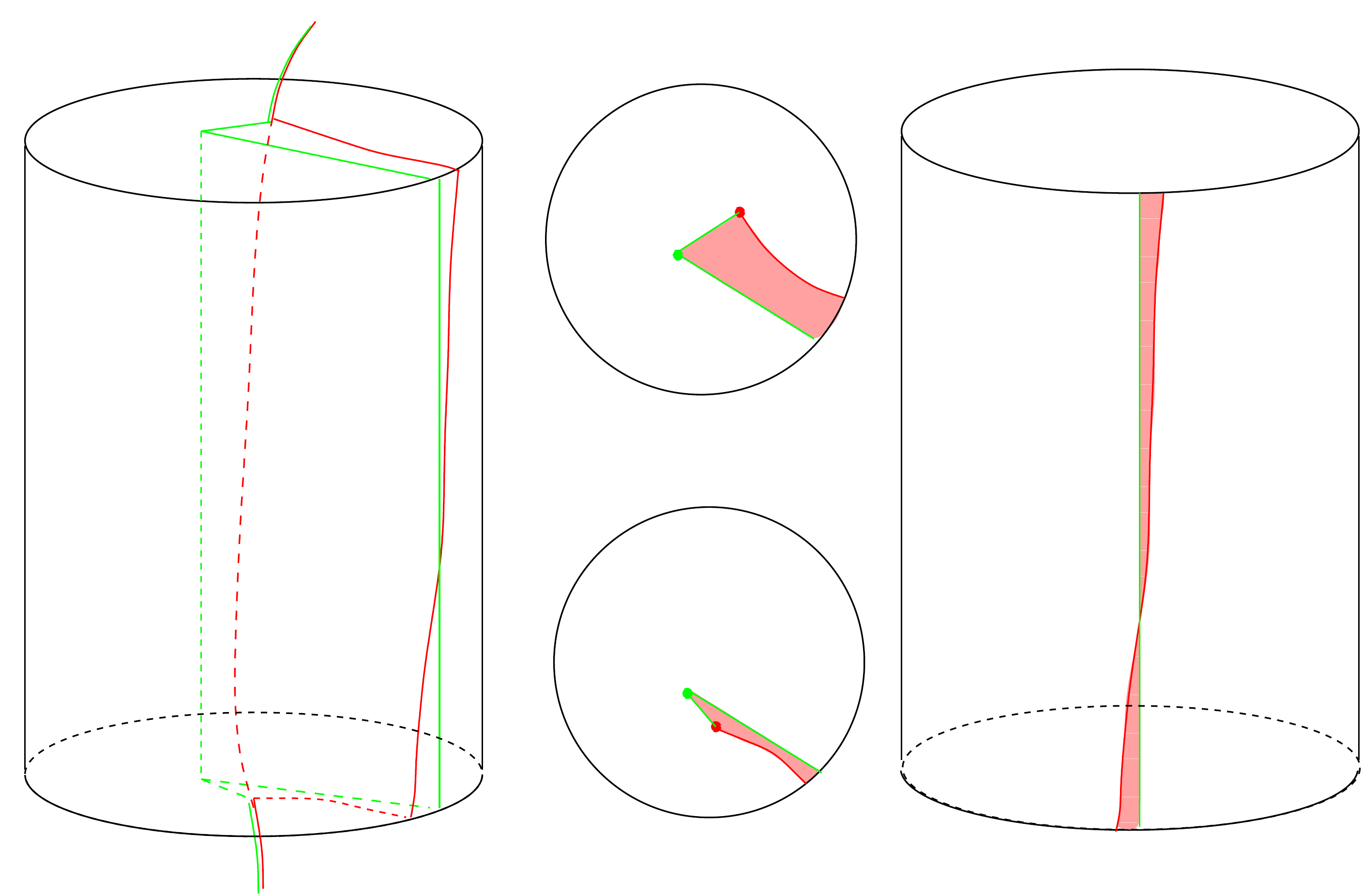_t}}
        \caption{New Median. \label{fig-newmdndtls}}
      \end{figure}

  \end{enumerate}

  The cost for each $S'_i$ on the top and bottom can be bounded by the area of the whole circle, and the cost of the side is $o(r) \delta $ where $\frac{o(r)}{\delta} \rightarrow 0$ as $r\rightarrow 0$.
  Therefore it is sufficient to show: 
  \begin{align*}
    \mbox{Improvement} - \mbox{Cost} ~ \sim ~ r\delta - 2\nCur \pi r^2 - \nCur o(r) \delta ~~> 0,
  \end{align*} 
  and this result can be seen to hold by choosing $\delta / r$ to be a big number.
  Now, we are going to state the problem and give a detailed proof. 

  \begin{thm} \label{thm-tgtconesarebooks}
    Let $\{T_i\}_{i=1}^\nCur$ be $\nCur$ $1$-currents with shared boundaries in $\R^3$, and $\mdn{T}_\lambda$ be their median.
    Then for any $\vx \in \mdn{T}_\lambda \backslash \partial \mdn{T}_\lambda$, there exists a cylinder $Cyl(\vx)$, such that the tangent cone for the minimal surfaces inside $Cyl(\vx)$ is a book, assuming $\mdn{T}_\lambda$ and all spanning currents $S_i$'s between $T_i$'s and $\mdn{T}_\lambda$ are smooth. 
  \end{thm}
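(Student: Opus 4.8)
The plan is to argue by contradiction, using that at a smooth point of the median each minimal surface $S_i$ is arbitrarily close to its tangent half-plane and that a collection of half-planes meeting along a common axis is area-critical precisely when its direction vectors sum to zero. Fix $\vx \in \mdn{T}_\lambda \setminus \boundary \mdn{T}_\lambda$; after shrinking the neighbourhood we may also assume $\vx \notin \bigcup_i \supp T_i$, so that near $\vx$ we have $\boundary S_i = \pm\mdn{T}_\lambda$. Choose coordinates so that the tangent line $L_\vx$ to the smooth curve $\mdn{T}_\lambda$ at $\vx$ is the vertical axis through $\vx$. Since $\mdn{T}_\lambda \subset \boundary S_i$ locally and the $S_i$ are smooth, each tangent cone $T_\vx S_i$ is a half-plane bounded by $L_\vx$; let $\vv_i$ be its unit horizontal direction. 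As half-planes sharing the common edge $L_\vx$ already form the combinatorial skeleton of a book, the whole content of the theorem is the stationarity $\sum_{i=1}^{\nCur} \vv_i = 0$. For intuition and as a clean special case, in the exactly conical model ($\nCur$ half-planes through the axis of a cylinder, with inputs $T_i = H_i \cap \boundary\, Cyl(r,h)$) the coarea formula reduces the median functional to minimising $\sum_i \lVert p - v_i \rVert$ on each horizontal slice, and by the geometric-median characterisation recalled in \cref{sec:medians_in_Rn} the minimiser is the center exactly when $\sum_i \vv_i = 0$; this confirms the simple-case claim stated in the outline above.

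Suppose now $\sum_i \vv_i \neq 0$, and set $\vw = \sum_i \vv_i / \lVert \sum_i \vv_i \rVert$. Inside a probing cylinder $Cyl(r,\delta)$ about $\vx$ with axis $L_\vx$, radius $r$, and height $\delta$ in the range $r \ll \delta \ll \sqrt{r}$ (e.g.\ $\delta = r^{3/4}$), replace $\mdn{T}_\lambda$ by the parallel line $L_\vx'$ obtained by translating $L_\vx$ a distance $t = cr$ in the direction $\vw$, with $c$ a small constant depending only on $\lVert \sum_i \vv_i \rVert$, joining $L_\vx'$ to $\mdn{T}_\lambda$ by short segments on the top and bottom disks. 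This gives a competitor $\mdn{T}^{\mathrm{new}}_\lambda$ with the same boundary as $\mdn{T}_\lambda$. For the flat-norm fillings take $S_i'$ equal to $S_i$ outside $Cyl(r,\delta)$; inside, let $S_i'$ be the \emph{flat} half-plane through $L_\vx'$ and the vertical line along which $T_\vx S_i$ exits the cylinder wall (these two lines are parallel, so this really is flat, and it leaves the exit curve on the side essentially unchanged), let a thin collar on the wall reconcile this half-plane with the slightly wiggly curve $S_i \cap \boundary\, Cyl(r,\delta)$, and let two pinched slivers on the top and bottom disks reconcile the traces. Since the mass of the true area-minimising filling of $\mdn{T}^{\mathrm{new}}_\lambda - T_i$ is at most $\mass(S_i')$, and (as $\lambda$ is small) $\F_\lambda(T - T_i)$ equals $\lambda$ times the minimal filling area, it suffices to prove $\sum_i \mass(S_i') < \sum_i \mass(S_i)$ to contradict optimality of $\mdn{T}_\lambda$.

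The accounting then goes: the one quantity that moves at first order is the total page area. The chord of $S_i'$ inside the cylinder has length $\lVert r\vv_i - t\vw \rVert = r - t(\vv_i \cdot \vw) + O(t^2/r)$ at every height, so $\sum_i \mass(S_i' \cap Cyl) = \nCur r\delta - t\delta\, \vw \cdot \sum_i \vv_i + O(t^2\delta/r) = \nCur r\delta - t\delta \lVert \sum_i \vv_i \rVert + O(t^2\delta/r)$, whereas $\sum_i \mass(S_i \cap Cyl) = \nCur r\delta + O(\delta^4)$ because the curvature bound coming from smoothness forces each $S_i$ to differ in area from $T_\vx S_i$ by $O(\mathrm{diam}^4) = O(\delta^4)$. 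The wall collars contribute $O(\delta^3)$ each (transverse deviation $O(\delta^2)$ times height $\delta$), and the top/bottom slivers contribute $O(r^2)$ each (they lie in the convex hull of the two trace endpoints, which are $O(r)$ apart and pinch together at the wall). With $t = cr$ and $\delta = r^{3/4}$ the gain is $c \lVert \sum_i \vv_i \rVert\, r\delta - O(c^2 r\delta)$ while all remaining errors are $O(\delta^4 + \delta^3 + r^2) = o(r\delta)$; choosing $c$ small and then $r$ small yields $\sum_i \mass(S_i') < \sum_i \mass(S_i)$, the desired contradiction. Hence $\sum_i \vv_i = 0$, so the tangent half-planes form a book, with $L_\vx$ as its spine; taking $Cyl(\vx)$ to be a small enough such probing cylinder gives the statement.

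The main obstacle is exactly this multi-parameter estimate: one must calibrate the aspect ratio $\delta/r$ of the probing cylinder and the perturbation size $t$ so that the genuine first-order gain $\Theta(r\delta)$ from unbending a non-stationary spine strictly dominates the transition costs on the top, bottom, and side of the cylinder, and each of those costs must be bounded honestly from the $C^1$ and curvature control provided by smoothness of $\mdn{T}_\lambda$ and of the $S_i$, rather than by the crude ``area of the whole disk'' bound used in the informal outline. A secondary point needing care is that each $T_\vx S_i$ is genuinely a single half-plane and not a more complicated minimal cone, which is where smoothness of the $S_i$ --- together with the remark, attributed to Krummel, that the median is then smooth as an intersection of smooth minimal surfaces --- is used.
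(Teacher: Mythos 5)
Your proposal is correct and follows essentially the same strategy as the paper's proof: assume the tangent half-planes are not stationary, translate the spine toward the geometric-median direction inside a probing cylinder, fill with the translated tangent half-planes, and show the $\Theta(r\delta)$ first-order area gain dominates the transition costs on the top, bottom, and side of the cylinder. The only difference is bookkeeping --- you take $\delta = r^{3/4}$ with explicit power-law error bounds, while the paper fixes the aspect ratio $r/\delta = \epsilon$ and controls errors via the cone angles $\theta_T,\theta_S \to 0$, then tunes $\epsilon$ and $\tan\theta$ so the net improvement is positive --- but both calibrations serve the same purpose.
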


  \begin{proof} 
    The proof will be presented in several detailed Steps.

    \medskip
    \textbf{Step 1:} Find an appropriate cylinder $Cyl(\vx,r,\delta)$ centered at $\vx$ with radius $r$ and height $h$, such that $d_H(S_i\cap Cyl(\vx,r,\delta),T_\vx S_i\cap Cyl(\vx,r,\delta))/r$ and $r/\delta$ can be as small as possible, where $d_H$ is the Hausdorff distance, where $T_\vx S_i$ is the tangent cone for the support of $S_i$ at $\vx$.

    Let $L_{\vx}$ be the tangent cones for $\supp(\mdn{T}_\lambda)$ at $\vx$. In the proof, we will suppress the notation $\supp (\cdot)$ to just write $\cdot.$ Since $\mdn{T}_\lambda$ is smooth and $L_{\vx}$ is tangent to $\mdn{T}_\lambda$ at $\vx$, then within some neighborhood of $\vx$, $\mdn{T}_\lambda$ has to stay inside the cone $\Cone_T(h_T, \theta_T)$ with central axis $L_{\vx}$, height $h_T$ and angle $\theta_T$.
    Denote the part of $\mdn{T}_\lambda$  and $L_{\vx}$ that are inside $\Cone_T(h_T, \theta_T)$ to be $\mdn{T}_\lambda(h_T,\theta_T)$ and $L_{\vx}(h_T,\theta_T)$, respectively.
    Then $\mdn{T}_\lambda(h_T,\theta_T)$ can be viewed as a graph over $L_{\vx}(h_T,\theta_T)$ for some smooth Lipschitz function $f$, with Lipschitz constant $Lip(f).$
    Now let us choose $\theta_T$ to satisfy $\tan\theta_T \geq \Lip(f)$ and
    
    \begin{align}\label{tangentlineangle}
      \theta_T \rightarrow 0 \mbox{ and } \frac{d_H(\mdn{T}_\lambda (h_T,\theta_T), L_{\vx}(h_T,\theta_T))}{h_T}\leq \tan\theta_T \rightarrow 0 \mbox{ as } h_T\rightarrow 0. 
    \end{align}

    \begin{figure}[htp!]
      \centering
      \scalebox{0.5}{
        \input{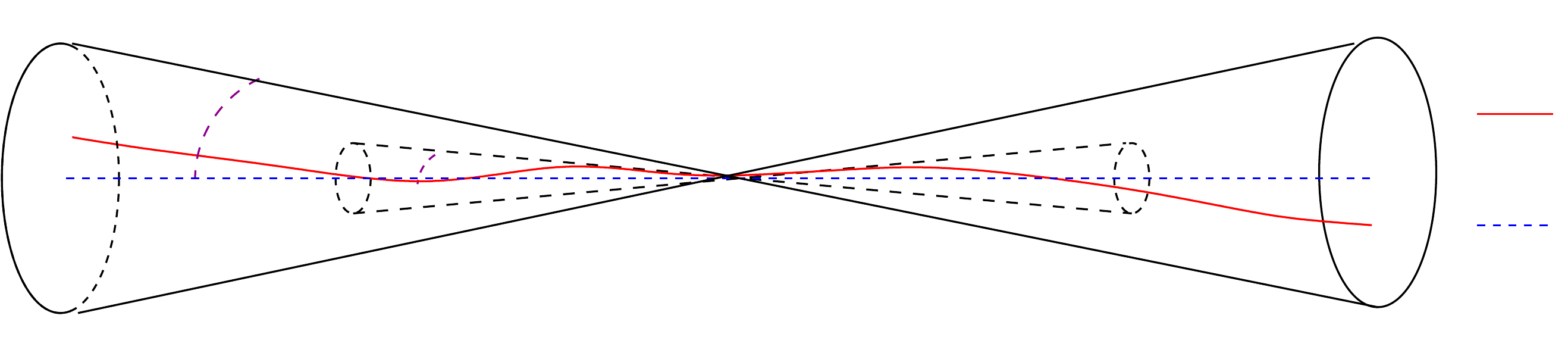_t}}
      \caption{\label{fig-tgtcone} Cone containing $\mdn{T}_\lambda(h_T, \theta_T)$: Since $L_{\vx}(h_T, \theta_T)$ is tangent to $\mdn{T}_\lambda(h_T, \theta_T)$ at $\vx$, the angle of the cone, $\theta_T$, will get smaller as the height $h_T$ of the cone decreases.}
    \end{figure}

    Similarly, since $S_i$ is smooth, within some other neighborhood of $\vx$, $S_i$ must stay inside the cone $\Cone_S(h_S, \theta_S)$ symmetric to $T_\vx S_i$ with height $h_S$ and angle $\theta_S$.
    Denote the parts of $S_i$ and $T_\vx S_i$ that stay inside $\Cone_S(h_S, \theta_S)$ to be $S_i(h_S, \theta_S)$ and $T_\vx S_i(h_S,\theta_S)$, respectively.
    Then $S_i(h_S, \theta_S)$ can be viewed as a graph of a smooth function $g$ over $T_\vx S_i$ for some smooth function $g$.
    Because $S_i$ is tangent to $T_\vx S_i$ at $\vx$, the gradient of $g$ at $\vx$ is 0, and
    \begin{align*}
      \lim_{\vy \rightarrow \vx} |\nabla g(\vy)| = 0.
    \end{align*}
We also know that $|\nabla g|$ is uniformly bounded in any closed subset of $\Proj_{T_\vx S_i} S_i(h_S, \theta_S)$. We may therefore pick $\Cone_S(h_S, \theta_S)$ in the following way:
\begin{itemize}
  \item Let $r_S$ be the radius of $Cyl_S$, and define $\theta_S\in (-\pi/2, \pi/2)$ such that $\tan\theta_S = \Lip(g|_{B(\vx, r_s)})$ where $B(\vx, r_s)\subset \Proj_{T_\vx S_i} S_i(h_S, \theta_S)$, and set $h_S = r_S\theta_S$.
\end{itemize}

\noindent Then $\theta_S = \arctan(\Lip(g))\rightarrow 0$ as $r_S\rightarrow 0$ and 
\begin{align}\label{tangentsufaceangle}
  \frac{d_H(S_i(h_S, \theta_S), T_\vx S_i(h_S,\theta_S))}{r_S} ~ \leq ~ \frac{h_S}{2r_S} ~ = ~
  \frac{1}{2}\tan\theta_S\rightarrow 0 \mbox{ as } h_S \rightarrow 0.
\end{align} 

\begin{figure}[htp!]
  \centering
  \scalebox{0.5}{
    \input{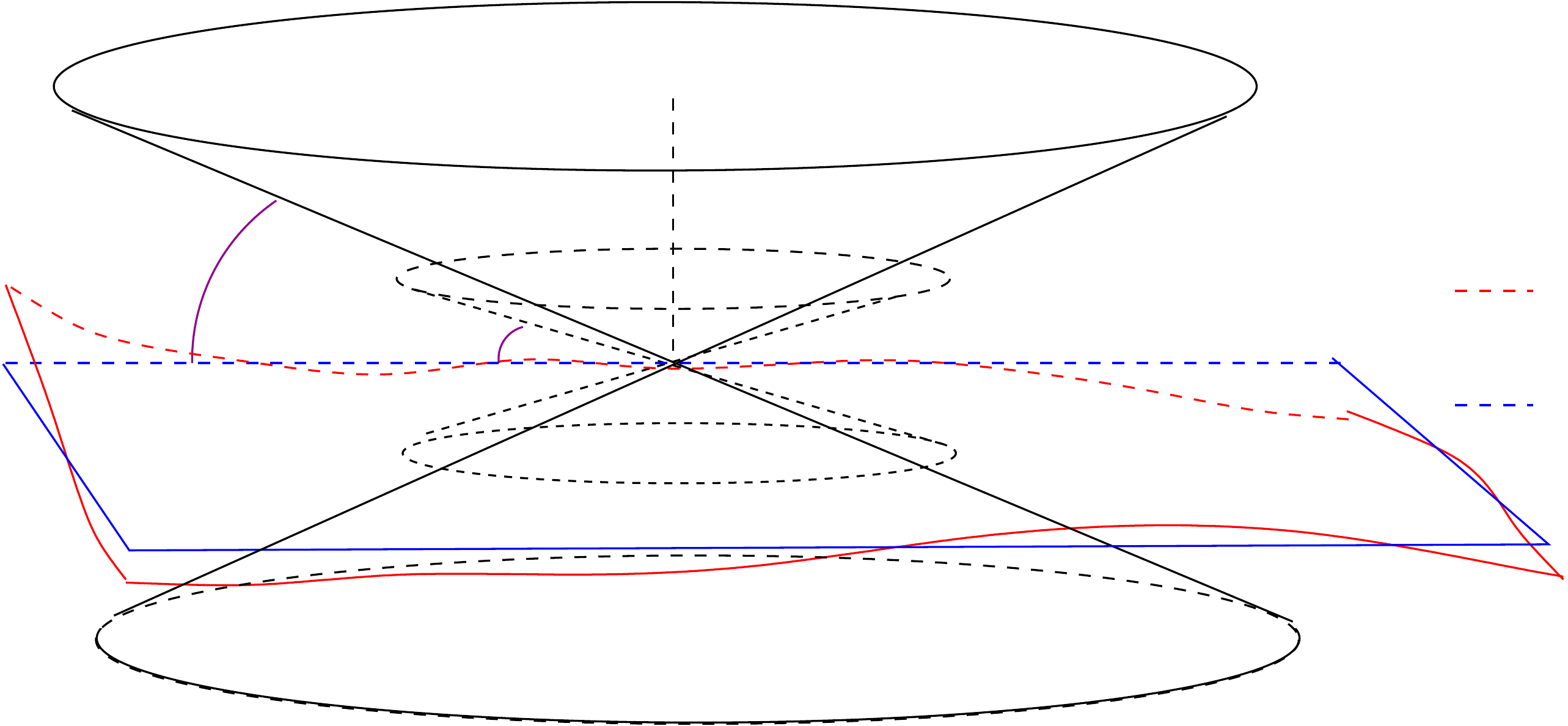_t}}
  \caption{\label{fig-tgtconeS} Cone containing $S_i(h_S,\theta_S)$: Since $T_\vx S_i(h_S,\theta_S)$ is tangent to $S_i(h_S,\theta_S)$ at $\vx$, the angle of the cone, $\theta_S$, will get smaller as the height $h_S$ of the cone decreases.
  This implies both $S_i(h_S,\theta_S)$ and $T_\vx S_i(h_S,\theta_S)$ will stay inside a narrower cone as $h_S$ goes to 0.}
\end{figure}

Now consider a sequence of cylinders $Cyl(\delta_{k'},r_{k'})$ around $\vx$ with central axis $L_{\vx}$, heights $\delta_{k'}$ and radii $r_{k'}$, such that the ratio between radii and heights, $r_{k'}/\delta_{k'} = \epsilon$ for all $k'$. Here $\epsilon$ is a positive constant that will be determined later. Therefore since both $\theta_T$ and $\theta_S$ go to $0$, we may pick $\theta_{k'} = \max\{\theta^{k'}_T, \theta^{k'}_S\}$, where $\theta^{k'}_T, \theta^{k'}_S$ are two angles for the cones $\Cone_T(\delta_{k'},\theta_{k'})$ and $\Cone_S(\delta_{k'},\theta_{k'})$ corresponding to $Cyl( \delta_{k'},r_{k'})$, such that the followings are true:

\begin{align}
  \mdn{T}_\lambda(\delta_{k'},\theta_{k'}), L_{\vx}(\delta_{k'},\theta_{k'}) \subset \Cone_T(\delta_{k'},\theta_{k'}) \subset Cyl(\delta_{k'},r_{k'}) \label{eqn-cdn1} \\ 
S_{k'}(\delta_{k'},\theta_{k'}), T_\vx S_i(\delta_{k'},\theta_{k'})\subset Cyl_S(\theta_{k'}, r_{k'}) \label{eqn-cdn2} \\ 
\frac{d_H(\mdn{T}_\lambda(\delta_{k'},\theta_{k'}),  L_{\vx}(\delta_{k'},\theta_{k'}))}{r_{k'}}\rightarrow 0 \mbox{ as } k' \rightarrow \infty \label{eqn-cdn3} \\ 
\frac{d_H(S_i(\delta_{k'},\theta_{k'}), T_\vx S_i(\delta_{k'},\theta_{k'}))}{r_{k'}}\rightarrow 0 \mbox{ as } k' \rightarrow \infty. \label{eqn-cdn4}
\end{align}

\begin{figure}[htp!]
  \centering
  \scalebox{0.5}{
    \input{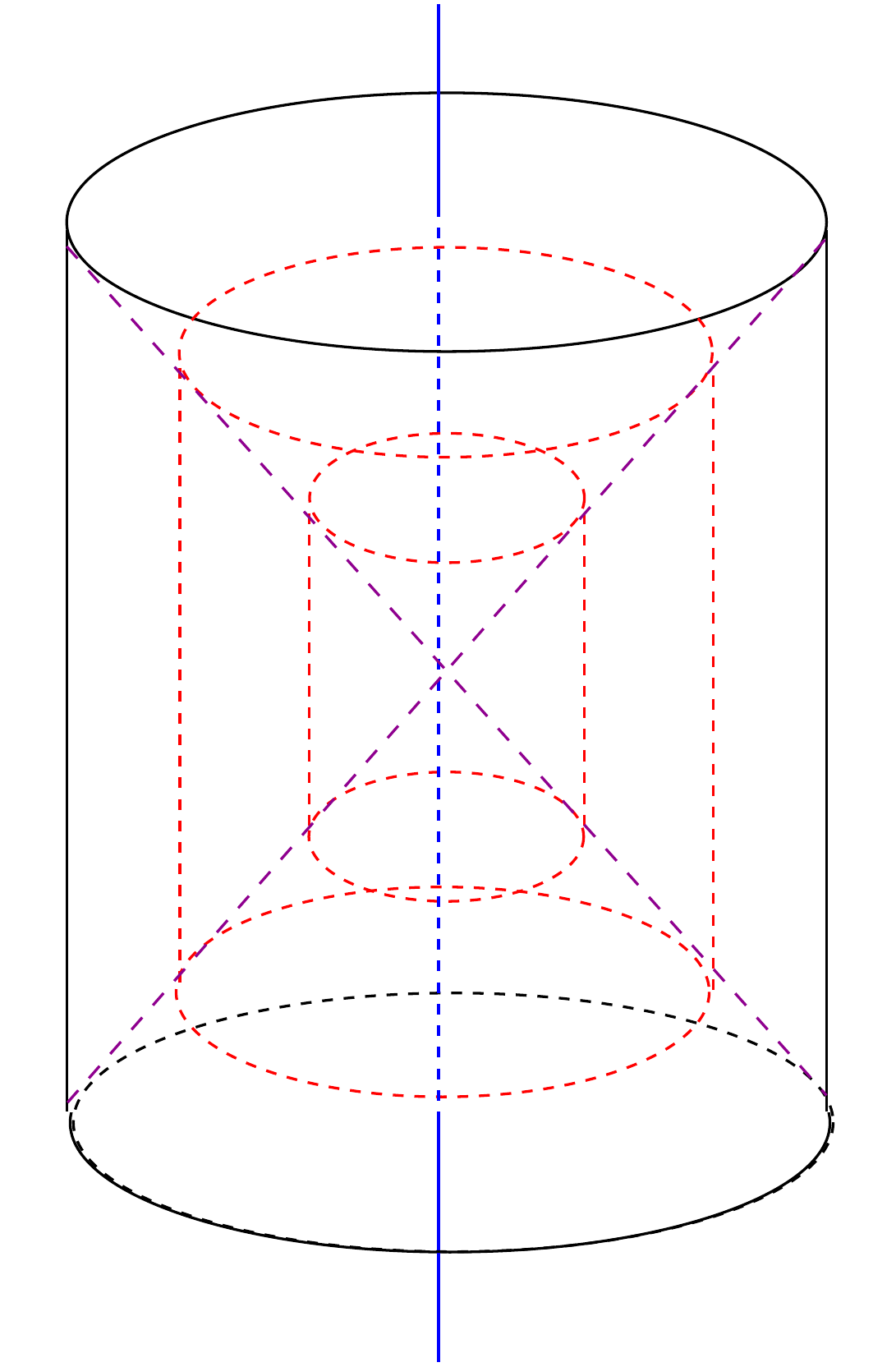_t}}
  \caption{\label{fig-cyl} Ratio preserved cylinder $Cyl(\delta_{k'},r_{k'})$'s.
    For the sequence of cylinders $Cyl(\delta_{k'},r_{k'})$'s, the ratio $r_{k'}/\delta_{k'}$ stays the same, where $r_{k'}$ is the radius and $\delta_{k'}$ is the height.}
\end{figure}

\textbf{Step 2:} Find the error between $S_i$ and $T_\vx S_i$ inside the cylinder. \\

Similarly as in Step 1, let $\Proj_{T_\vx S_i} S_i(\delta_{k'},\theta_{k'})$ be the image of the orthogonal projection of $S_i(\delta_{k'},\theta_{k'})$ into the plane containing $T_\vx S_i$.
As mentioned before, since $S_i(\delta_{k'},\theta_{k'})$ is smooth, it can be treated as the graph of some smooth function $g$ over $\Proj_{T_\vx S_i} S_i(\delta_{k'})$.
More importantly, $g$ is Lipschitz and $\Lip(g) \leq \tan\theta_{k'}$ since $S_i(\delta_{k'},\theta_{k'}) \subset Cyl_S(\theta_{k'}, r_{k'})$.
Therefore 
\begin{align}\label{projectionerror}
  \begin{split}
    |\Hd^2(S_i(\delta_{k'},\theta_{k'})) - \Hd^2(\Proj_{T_\vx S_i}(S_i(\delta_{k'},\theta_{k'})))| & \leq \left(\sqrt{1+\Lip^2(g)} - 1\right) \Hd^2(\Proj_{T_\vx S_i} S_i(\delta_{k'},\theta_{k'}))\\
& = \left( \frac{1+\Lip^2(g)-1}{\sqrt{1+\Lip^2(g)}+1} \right) \Hd^2(\Proj_{T_\vx S_i} S_i(\delta_{k'},\theta_{k'}))\\
& \leq \frac{\Lip^2 (g)}{2} \Hd^2(\Proj_{T_\vx S_i} S_i(\delta_{k'},\theta_{k'}))\\
& \leq \frac{\Lip^2 (g)}{2} (2r_{k'}\delta_{k'})\\
& = \Lip^2(g) r_{k'}\delta_{k'}\\
& \leq \tan^2\theta_{k'} \cdot r_{k'}\delta_{k'} \, .
  \end{split}
\end{align}

The fourth inequality follows from the observation that area of $\Proj_{T_\vx S_i}S_i(\delta_{k'},\theta_{k'})$ cannot exceed the area of $T_\vx S_i \cap Cyl_S(\theta_{k'}, r_{k'}) = 2r_{k'}\delta_{k'}$.

Next, we will calculate the area difference between $T_\vx S_i(\delta_{k'},\theta_{k'})$ and $\Proj_{T_\vx S_i} S_i(\delta_{k'},\theta_{k'})$. By recalling the definition of $\Proj_{T_\vx S_i} S_i(\delta_{k'},\theta_{k'})$, $T_\vx S_i(\delta_{k'},\theta_{k'})$ and $\Proj_{T_\vx S_i} S_i(\delta_{k'},\theta_{k'})$ are identical except at the places near $L_{\vx}(\delta_{k'},\theta_{k'})$ and $T_\vx S_i(\delta_{k'},\theta_{k'})\cap \partial Cyl(\delta_{k'}, r_{k'})$---see \cref{fig-proj}.
\begin{figure}[htp!]
  \centering
  \scalebox{0.5}{
    \input{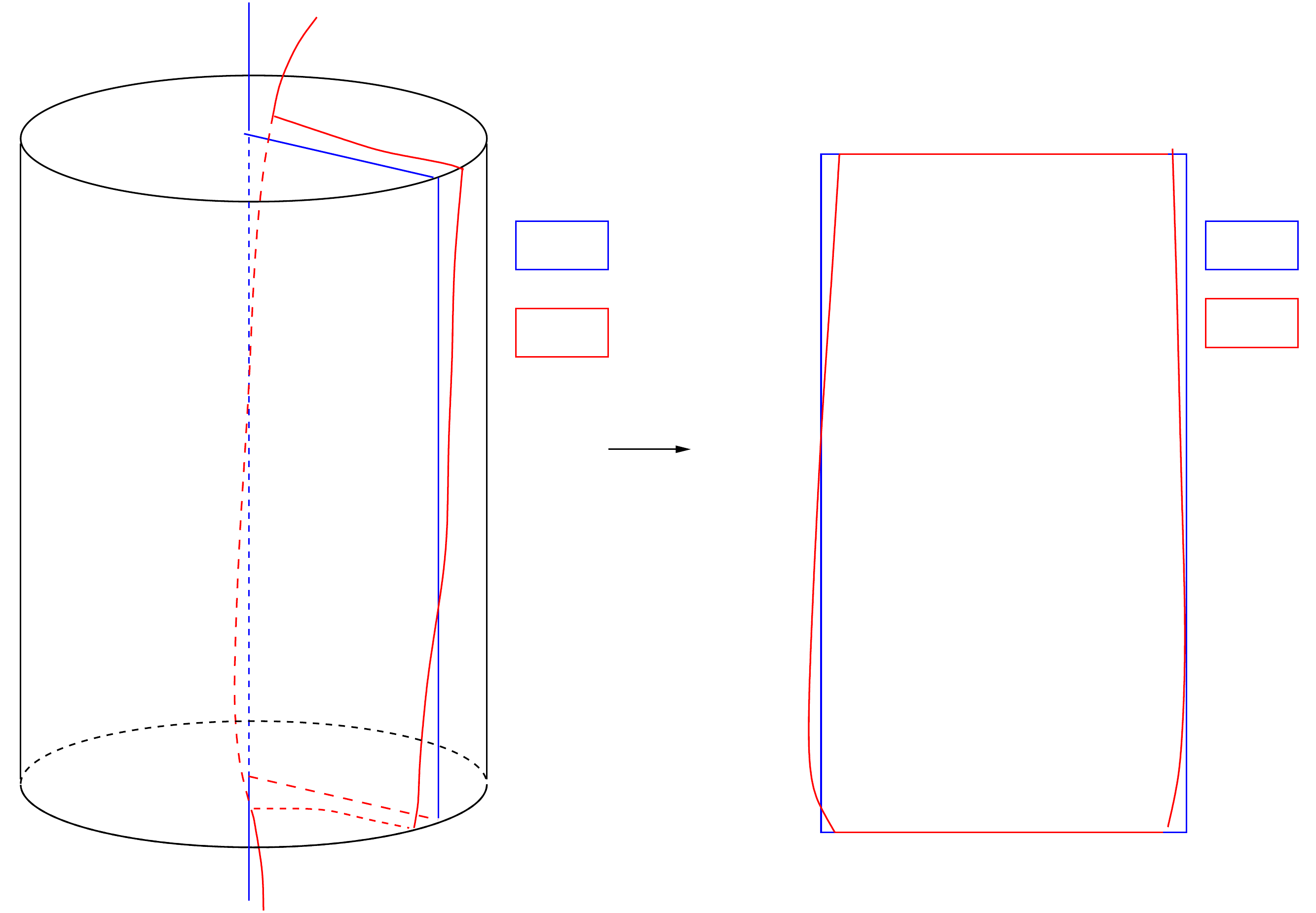_t}}
  \caption{\label{fig-proj} Ratio preserved cylinder $Cyl(\delta_{k'},r_{k'})$: the area differences occur only around $L_{\vx}(\delta_{k'},\theta_{k'})$ and $T_\vx S_i(\delta_{k'},\theta_{k'})\cap \partial Cyl(\delta_{k'}, r_{k'})$, while the other parts are identical.}
\end{figure}

\begin{itemize}
  \item \emph{Area difference near $T_\vx S_i(\delta_{k'},\theta_{k'})$:} \\
  \smallskip
  \\
    This area difference is caused by the deviation from $L_{\vx}(\delta_{k'},\theta_{k'})$ to $\mdn{T}_\lambda(\delta_{k'},\theta_{k'})$, which is controlled by $d_H^{L_{\vx}}(L_{\vx}(\delta_{k'},\theta_{k'}),\mdn{T}_\lambda(\delta_{k'},\theta_{k'}))$, the Hausdorff distance in $L_{\vx}$. Since orthogonal projections into subspaces do not increase distances, 
    \[ d_H^{L_{\vx}}(\Proj_{T_\vx S_i}S_i(\delta_{k'},\theta_{k'}), T_\vx S_i(\delta_{k'},\theta_{k'})) ~ \leq ~
    d_H(L_{\vx}(\delta_{k'},\theta_{k'}),\mdn{T}_\lambda(\delta_{k'},\theta_{k'})).\]
    Hence the area difference near $L_{\vx}(\delta_{k'},\theta_{k'})$, denoted by $AD_1$, is given by 
    \begin{equation}\label{areadiff1}
      \begin{array}{rcl}
        AD_1 & \leq & d_H(\Proj_{T_\vx S_i}S_i(\delta_{k'}),T_\vx S_i(\delta_{k'}))\cdot \delta_{k'} \\
        & \leq & d_H(L_{\vx}(\delta_{k'}),\mdn{T}_\lambda(\delta_{k'}))\cdot \delta_{k'} \\
        & = & \frac{d_H(L_{\vx}(\delta_{k'}),\mdn{T}_\lambda(\delta_{k'}))}{r_{k'}} \, r_{k'} \delta_{k'} \\
        & \leq & \tan\theta_{k'} r_{k'}\delta_{k'}.
      \end{array}
    \end{equation}

  \item \emph{Area difference near $T_\vx S_i(\delta_{k'})\cap \partial Cyl(\delta_{k'}, r_{k'})$:} \\
  \smallskip
  \\
    This area difference is caused by the distance from $\Proj_{T_\vx S_i} S_i(\delta_{k'})\cap \partial Cyl(\delta_{k'}, r_{k'})$ to $T_\vx S_i(\delta_{k'})\cap \boundary Cyl(\delta_{k'}, r_{k'})$:
    \begin{align*}
      &\ \ \ \  d^{\partial Cyl(\delta_{k'},r_{k'})}_H(\Proj_{T_\vx S_i} (S_i(\delta_{k'},\theta_{k'})\cap \partial Cyl(\delta_{k'}, r_{k'})), T_\vx S_i(\delta_{k'},\theta_{k'})\cap \partial Cyl(\delta_{k'}, r_{k'})) \\
      &\leq r_{k'} - \sqrt{r_{k'}^2 - d_H(S_i(\delta_{k'},\theta_{k'})\cap Cyl(\delta_{k'}, r_{k'})), T_\vx S_i(\delta_{k'},\theta_{k'})\cap Cyl(\delta_{k'}, r_{k'})) }\\
      &\leq r_{k'} - \sqrt{r^2_{k'} - (r_{k'}\tan\theta_{k'})^2}\\
      & = \frac{1-(1-\tan^2\theta_{k'})}{1+\sqrt{1-\tan^2\theta_{k'}}} r_{k'} \\
      & \leq (\tan^2\theta_{k'})r_{k'}. 
    \end{align*}

Therefore the area difference near $T_\vx S_i(\delta_{k'})\cap \partial Cyl(\delta_{k'}, r_{k'})$, denoted by $AD_2$, is given by 
  \begin{equation}\label{areadiff2}
    \begin{array}{rcl}
      AD_2 & \leq & d^{\partial Cyl(\delta_{k'},r_{k'})}_H(\Proj_{T_\vx S_i} (S_i(\delta_{k'})\cap \partial Cyl(\delta_{k'}, r_{k'})), T_\vx S_i(\delta_{k'})\cap \partial Cyl(\delta_{k'}, r_{k'}))\delta_{k'} \\
      & \leq &  (\tan^2\theta_{k'})r_{k'} \delta_{k'} \, .
    \end{array}
  \end{equation}

\end{itemize}

Hence, we conclude that the area difference between $T_\vx S_i(\delta_{k'},\theta_{k'})$ and $\Proj_{T_\vx S_i} S_i(\delta_{k'},\theta_{k'})$ is bounded above by the following inequality: 
  \begin{equation}\label{tangentprojectionerror}
    \begin{array}{rcl}
   |\Hd^2(T_\vx S_i(\delta_{k'},\theta_{k'})) - \Hd^2(\Proj_{T_\vx S_i} S_i(\delta_{k'},\theta_{k'}))| & \leq & 
   AD_1 + AD_2 \\
   & \leq & \tan\theta_{k'} r_{k'} \delta_{k'} + \tan^2\theta_{k'} r_{k'} \delta_{k'}.
   \end{array}
  \end{equation}

By triangle inequality, together with \cref{projectionerror,tangentprojectionerror}, we get inequalities: 
\begin{align}\label{tangentsurfacedifference}
  \begin{split}
    &\ \ \ \ |\Hd^2(S_i(\delta_{k'},\theta_{k'})) - \Hd^2(T_\vx S_i(\delta_{k'},\theta_{k'}))|\\
    & \leq |\Hd^2(S_i(\delta_{k'},\delta_{k'})) - \Hd^2(\Proj_{T_\vx S_i}(\delta_{k'},\delta_{k'}))| + |\Hd^2(\Proj_{T_\vx S_i})(\delta_{k'},\theta_{k'}) - \Hd^2(T_\vx S_i(\delta_{k'},\theta_{k'}))|\\
    &\leq \tan^2\theta_{k'}\cdot r_{k'}\delta_{k'} ~+~ \tan\theta_{k'} r_{k'}\delta_{k'} + (\tan^2\theta_{k'})r_{k'}\delta_{k'} = (2\tan^2\theta_{k'} + \tan\theta_{k'}) r_{k'}\delta_{k'}.
  \end{split}
\end{align}

As there are $\nCur <\infty$ input currents, we can find a cylinder $Cyl(\vx,r,\delta)$ with $L_{\vx}$ as its central axis, and $r$ and $\delta$ as its radius and height, respectively, such that \cref{tangentsurfacedifference} and \cref{eqn-cdn1,eqn-cdn2,eqn-cdn3,eqn-cdn4} hold. Therefore, 
\begin{align}\label{totaltangentdifference}
  \left|\sum_{i=1}^\nCur \Hd^2(S_i(\delta,\theta) -\sum_{i=1}^\nCur \Hd^2(T_\vx S_i(\delta,\theta))\right| \leq \nCur [2\tan^2\theta\cdot r\delta + \tan\theta\cdot r\delta].
\end{align}

where $\delta = \epsilon r$, remembering that $\epsilon$ is some positive constant that will be determined later, and $\theta$ is a tiny angle which will also be determined later. \\

\textbf{Step 3:} Assuming the $T_\vx S_i$'s do not form a book, we will find the improvement between $T_\vx S'_i$'s and $T_\vx S_i$'s inside the cylinder.\\
\smallskip
\\
Next we show that if $\mdn{T}_\lambda$ is the median, $T_\vx S_i(\delta,\theta)$'s must form a book.
Define $\vx_{t}$ and $\vx_{b}$ to be the intersections of $L_{\vx}(\delta,\theta)$ at the top and bottom of the cylinder (See Fig \ref{fig-newmdnTprL}) and $l^{t}_i$'s, $l^{b}_i$'s to be the segments connecting $\vx_{t}, \vx_{b}$ and $p_i^t$'s, $p_i^b$'s, where $p_i^t$'s and $p_i^b$'s are the intersections of $T_\vx S_i(\delta)$'s with the boundaries of the top and bottom of the cylinder $Cyl(r,\delta)$. 

\medskip
If the $T_\vx S_i(\delta)$'s do not form a book, the unit vectors from $\vx_{t}$ to $p_i^{t}$'s and $\vx_{b}$ to $p_i^{b}$'s will not sum up to $0$.
Define $\vx^{\mbox{opt}}_{t}, \vx^{\mbox{opt}}_{b}$ to be the median points for the $p_i^{t}$'s and $p_i^{b}$'s, and define $l^{\mbox{opt},t}_i, l^{\mbox{opt},b}_i$ to be the line segments between $\vx^{\mbox{opt}}_{t}, \vx^{\mbox{opt}}_{b}$ and the $p^{t}_i$'s, $p_i^{b}$'s respectively. By the properties of the median of a collection of points, we get that  
\begin{align*}
  \beta = \sum_{i=1}^\nCur l^{t}_i - \sum_{i=1}^\nCur l_i^{\mbox{opt},t} = \sum_{i=1}^\nCur l^{b}_i - \sum_{i=1}^\nCur l_i^{\mbox{opt},b} > 0.
\end{align*}

Moreover, $\beta$ is comparable to $r$, i.e., $\beta = O(r) \cdot r$ where $O(r) > \alpha > 0$.
Therefore there exists $\vx'_{t}$ and $\vx'_{b}$ such that 
\begin{align*}
  \sum_{i=1}^\nCur l^{t}_i - \sum_{i=1}^\nCur (l')_i^{t} = \sum_{i=1}^\nCur l^{b}_i - \sum_{i=1}^\nCur (l')_i^{b}>\frac{\beta}{2},
\end{align*} 
where the $(l')^{t}_i$'s and $(l')^{b}_i$'s connect $\vx'_{t}, \vx'_{b}$ to the $p^{t}_i$'s and $p_i^{b}$'s respectively. This shows that by replacing $L_{\vx}(\delta,\theta)$ with the segment connecting $\vx'_{t}$ and $\vx'_{b}$, denoted as $L'_{\vx}(\delta,\theta)$, the area improvement is 
\begin{align}\label{areaimprovement}
  \mbox{Area}^- = \left(\sum_{i=1}^\nCur l^{t}_i - \sum_{i=1}^\nCur (l')_i^{t}\right) \delta ~>~ \frac{\beta\delta}{2} \,.
\end{align}

\textbf{Step 4:} Define the new median. \\

From Step 3, we know that replacing $L_{\vx}(\delta)$ with $L'_{\vx}(\delta)$ can improve the area.
However, that is only the improvement inside the interior of the cylinder $Cyl(\vx,r,\delta)$, and we still need to consider some extra costs when replacing $\mdn{T}_\lambda$ with a new median $\mdn{T}'_\lambda$. Define $\mdn{T}'_\lambda$ as follows:
\begin{itemize}
  \item inside the interior of $Cyl(\vx,r,\delta)$, $\mdn{T}'_\lambda = L'_{\vx}(\delta,\theta)$;
  \item at the top (bottom) of $Cyl(\vx,r,\delta)$, $\mdn{T}'_\lambda = \vx^{\lambda}_t \vx'_{t}$ ($\mdn{T}'_\lambda = \vx^{\lambda}_t\vx'_{b}$), where $\vx_{t}^{\lambda}$ ($\vx_{b}^{\lambda}$) is the intersection of $\mdn{T}_\lambda$ and the top (bottom) of $Cyl(\vx,r,\delta)$, and $\vx^{\lambda}_t\vx'_{t}$ ($\vx^{\lambda}_t\vx'_{b}$) is the line segment from $\vx^{\lambda}_t$ ($\vx^{\lambda}_t$) to $\vx'_{t}$ ($\vx'_{b}$) with orientation from $\vx^{\lambda}_t$ ($\vx^{\lambda}_b$) to $\vx'_{t}$ ($\vx'_{b}$); and
  \item outside $Cyl(\vx,r,\delta)$, $\mdn{T}'_\lambda = \mdn{T}_\lambda$. 
\end{itemize} 

\begin{figure}[htp!]
  \centering
  \scalebox{0.5}{
    \input{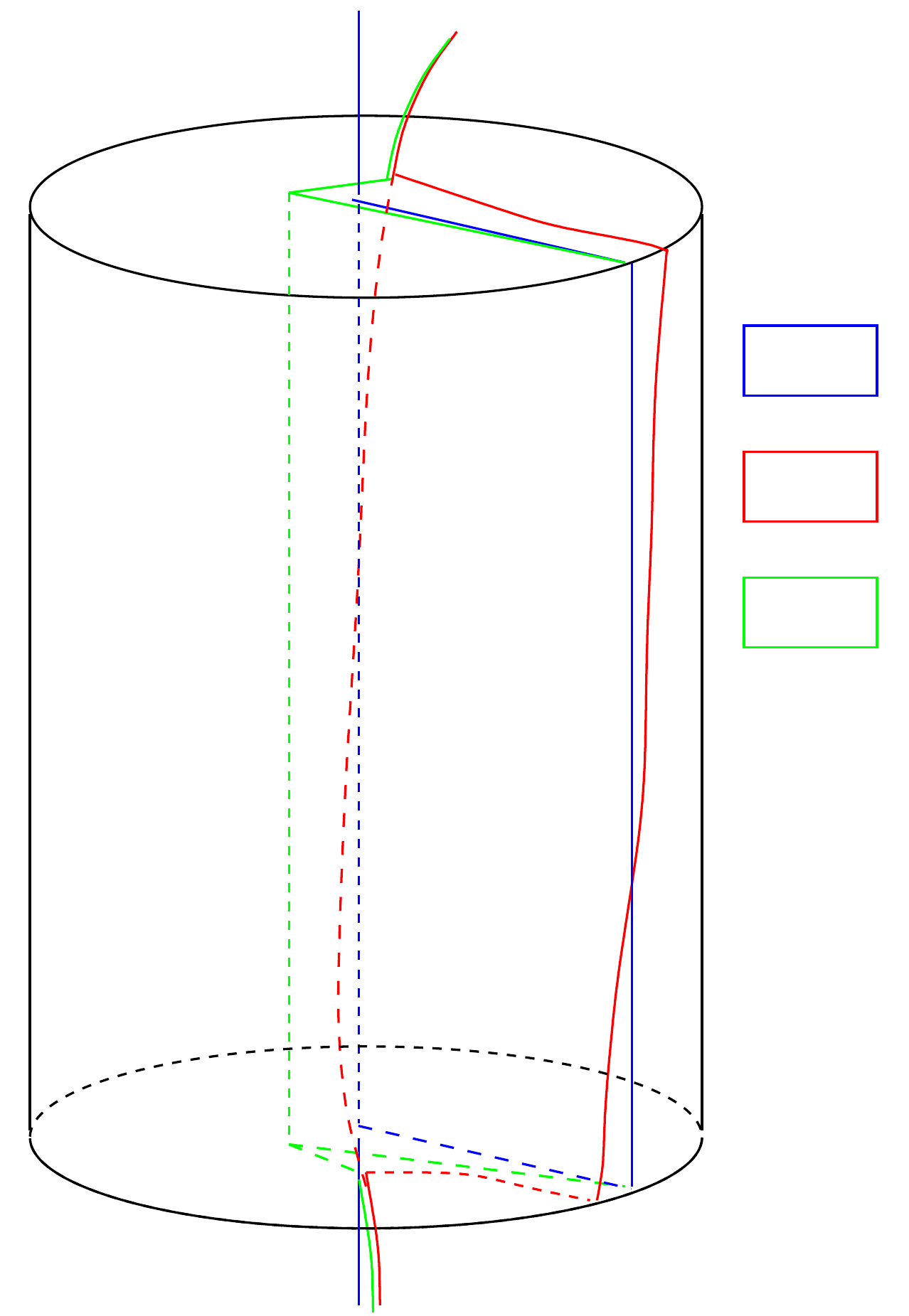_t}}
  \caption{\label{fig-newmdnTprL} New median $\mdn{T}'_\lambda$.}
\end{figure}

After this replacement, the new $S'_i$ that spans $\mdn{T}'_\lambda$ and input currents $T_i$ are defined as follows:
\begin{itemize}
  \item inside the interior of $Cyl(\vx,r,\delta)$, $S'_i$ will be $T_\vx S'_i(\delta,\theta)$, which is the replacement of the $T_\vx S_i(\delta,\theta)$ that has $L'_{\vx}(\delta,\theta)$ and $(l')^t_{i}$ as its height and width respectively;
  \item at the top (bottom) of $Cyl(\vx,r,\delta)$, $S'_i$ is the region enclosed by $S_i(\delta,\theta)\cap Cyl^{t}(\vx,r,\delta)$ ($S_i(\delta)\cap C^{b}(\vx,r,\delta)$), $\vx^{\lambda}_t\vx'_{t}$ ($\vx^{\lambda}_t\vx'_{b}$), $(l')^t_{i}$ ($(l')^b_{i}$) and $\partial Cyl^{t}(\vx,r,\delta)$ ($\partial C^{b}(\vx,r,\delta)$), where $Cyl^{t}(\vx,r,\delta)$ is the top (bottom) circle of $Cyl(\vx,r,\delta)$ (See Fig \ref{fig-newmdndtls});
  \item on the side of $Cyl(\vx,r,\delta)$, $S'_i$ is the region enclosed by the $\partial Cyl^{t}(\vx,r,\delta)$, $\partial C^{b}(\vx,r,\delta)$, $\partial S_i\cap Cyl^{s}(\vx,r,\delta)$ and $\partial T_\vx S'_i(\delta,\theta)\cap Cyl^{t}(\vx,r,\delta)$ where $Cyl^{t}(\vx,r,\delta)$ is the cylindrical side of the $Cyl(\vx,r,\delta)$; and
  \item $S'_i = S_i$ elsewhere. 
\end{itemize}

\textbf{Step 5:} Find the error for replacement on the top and bottom of the cylinder. \\

For each $S_i$, the error on $Cyl^{t}(\vx,r,\delta)$ is less than the whole area of $Cyl^{t}(\vx,r,\delta)$, and there are $\nCur$ input currents, so the total error is given by $\nCur \pi r^2$.
The same argument works for the bottom.
Therefore, the cost at the top and bottom of $Cyl(\vx,r,\delta)$ together is 
\begin{align}\label{topbottomcost}
 \mbox{Cost}_1 <  2\nCur \pi r^2 \,.
\end{align}

\textbf{Step 6:} Find the error for replacement on the side of the cylinder.\\

For each $S_i$, because it satisfies \cref{eqn-cdn2}, i.e., it stays inside $Cyl_S(\vx,r,\delta)$, the error is contained in the band centered at $T_\vx S_i(\delta,\theta) \cap Cyl^{s}(\vx,r,\delta) $ with width $2 r \theta$.
Therefore the total cost on the side of $Cyl(\vx,r,\delta)$ satisfy the following bound
\begin{align}\label{sidecost}
  \mbox{Cost}_2 ~\leq~ 2\nCur r\theta \delta ~\leq~ 2\nCur \tan\theta \cdot r\delta \,.
\end{align}

\textbf{Step 7:} Compare the improvement and the costs.\\

The improvement between $\mdn{T}'_\lambda$ and $\mdn{T}_\lambda$ happens inside $Cyl(\vx,r,\delta)$ (see \cref{areaimprovement,totaltangentdifference}). By the triangle inequality, the total improvement, $I$, is bounded below as follows:
\begin{align}\label{totalimprovement}
  \begin{split}
    I &\geq \left|\sum_{i=1}^\nCur (\Hd^2(S_i(\delta,\theta)) -\Hd^2(S'_i(\delta,\theta)))\right|\\
    & \geq \left|\sum_{i=1}^\nCur (\Hd^2(T_\vx S_i(\delta,\theta)) -\Hd^2(S'_i(\delta,\theta)))\right| - \left|\sum_{i=1}^\nCur \Hd^2(S_i(\delta,\theta) -\sum_{i=1}^\nCur \Hd^2(T_\vx S_i(\delta,\theta))\right| \\
    & \geq \frac{\beta\delta}{2} - \nCur [2\tan^2\theta\cdot r\delta + \tan\theta \cdot r\delta].
  \end{split}
\end{align}

\noindent The total cost, $C$, is the sum of costs in \cref{topbottomcost,sidecost}, which is 
\begin{align}\label{totalcost}
  \begin{split}
    C & = \mbox{Cost}_1 + \mbox{Cost}_2 \leq 2\nCur \pi r^2 + 2\nCur \tan\theta \cdot r \delta.
  \end{split}
\end{align}

\noindent Combining \cref{totalimprovement,totalcost}, the net improvement will be 
\begin{align}\label{netimprovement}
  \mbox{Net}_I = I - C = \frac{\beta\delta}{2} - \nCur [2\tan^2\theta\cdot r\delta + \tan\theta\cdot r\delta] - 2\nCur \pi r^2 - 2\nCur \tan\theta \cdot r \delta \,.
\end{align}

If $\mbox{Net}_I > 0$, then replacing the old median $\mdn{T}_\lambda$ with the new median $\mdn{T}'_\lambda$, will end up reducing the flat norm distance, which contradicts the fact that $\mdn{T}_\lambda$ is the median.
So it is left to show that we may choose the appropriate $r, \delta, \epsilon$ ($r = \epsilon \delta$) and $\theta$ to make $\mbox{Net}_I$ positive. Indeed, 
\begin{align}\label{comparison}
  \begin{split}
    \mbox{Net}_I & \geq \frac{\beta\delta}{2} - \nCur [2\tan^2\theta\cdot r\delta + \tan\theta\cdot r\delta] - 2\nCur \pi r^2 - 2\nCur \tan\theta \cdot r \delta\\
    & = \frac{\beta\delta}{2} - \nCur [2\tan^2\theta \cdot \epsilon \delta^2] -2\nCur\pi (\epsilon \delta)^2 -3\nCur\tan \theta \cdot \epsilon \delta^2\\
    & = \delta \left(\frac{\beta}{2} - \nCur [2\epsilon \delta \tan^2\theta] - 2\nCur\pi \epsilon^2 \delta -3\nCur \epsilon \delta \tan\theta \right)\\
    & > \delta \left(\frac{\beta}{2} - \nCur [2\epsilon \delta \tan^2\theta] - \epsilon^2  -\frac{\epsilon \tan\theta }{\pi} \right)~~\mbox{since } \delta< \frac{1}{3\nCur\pi} <1, \\
    & = \delta \left(\frac{\beta}{2} - \nCur [2 \epsilon \delta \lambda^2] - \epsilon^2  -\frac{\lambda \epsilon}{\pi} \right)~~  \mbox{since } \tan\theta = \lambda,\\
    & = \delta \left(\frac{\beta}{2} - 2\nCur\epsilon \delta \lambda^2 - \epsilon^2  -\frac{\lambda \epsilon}{\pi} \right) \,.
  \end{split}
\end{align}

Define the quadratic function
\begin{align}
  p(\lambda) =  - 2\nCur\epsilon \delta \lambda^2 -\frac{\lambda \epsilon}{\pi} - \epsilon^2   + \frac{\beta}{2} \,.
\end{align}
Its discriminant is 
\begin{align*}
  \Delta & = \left(\frac{\epsilon}{\pi}\right)^2 + 4\left(2\nCur\epsilon\delta\right)\left(\frac{\beta}{2}-\epsilon^2\right)\\
  &> \left(\frac{\epsilon}{\pi}\right)^2 + 4\left(2\nCur\epsilon\delta\right)\left(\frac{cr}{2}-\epsilon^2\right)\\
  & = \left(\frac{\epsilon}{\pi}\right)^2 + 4\left(2\nCur\epsilon\delta\right)\left(\frac{c\epsilon \delta}{2}-\epsilon^2\right) \,.
\end{align*}

Picking $\epsilon < c\delta/2$ gives us that $\Delta> \epsilon/\pi\,$.
And moreover, as long as
\[ 0 ~<~ \lambda ~<~ \frac{\frac{\epsilon}{\pi}-\Delta}{-4n\epsilon} ~=~ \frac{\Delta - \frac{\epsilon}{\pi}}{4n\epsilon} \,,\]
we get $p(\lambda)>0$.
Hence $\mbox{Net}_I>0$, which means $\mdn{T}'_\lambda$ being the median will decrease the flat norm distance  $\mdn{T}_\lambda$.
This contradicts the fact that $\mdn{T}_\lambda$ is the median.  
\end{proof}


\section{Median Shapes on Simplicial Complexes: Preliminaries} \label{sec-prelim}
\label{sec:simp-prelim}

We consider the median shape problem under the settings of a {\em finite} simplicial complex. We had previously studied the flat norm under simplicial settings \cite{IbKrVi2013}.
Motivated by this approach, it is natural to consider the problem of defining, and more importantly, efficiently computing average shapes under the simplicial setting.
The input shapes, which are represented as integral $p$-currents in the continuous setting, are now represented as $p$-chains in a simplicial complex $K$ of dimension $q$ (for $q \geq p+1$).
We restrict our attention to the case where $K$ is finite, which also implies that the input chains are finite.

Let $\sigma_i$ for $i=1,\dots,m$ denote the $p$-simplices and $\tau_j$ for $j=1,\dots,n$ denote the $(p+1)$-simplices of $K$.
To compute the simplicial flat norm of the integral current represented by a $p$-chain $\vt = \sum_i t_i \sigma_i$ with $t_i \in \Z$, we consider candidate $(p+1)$-chains $\vs = \sum_j s_j \tau_j$ with $s_j \in \Z$, which defines the corresponding decomposition as $\vx = \sum_i x_i \sigma_i = \vt - \boundary_{p+1} \vs$.
Thus $\vx$ and $\vt$ are homologous $p$-chains, with $\vs$ being the $(p+1)$-chain defining the homology.
The flat norm decomposition is given by the pair of chains $(\vx,\vs)$ that minimizes the sum of weighted volumes of these chains, i.e.,
\[ \sum_{i=1}^m \vol_p(\sigma_i) \, |x_i| ~+~ \lambda \sum_{j=1}^n \vol_{p+1}(\tau_j) \, |s_j|, \]
where $\vol_p(\sigma_i)$ and $\vol_{p+1}(\tau_j)$ are the $p$-dimensional volume of $\sigma_i$ and the $(p+1)$-dimensional volume of $\tau_j$.
We note that $\vol_p(\sigma)$ is equivalent to the mass $\mass(\sigma)$ of the $p$-simplex $\sigma$.
Recall that $\lambda \geq 0$ is the scale parameter.
The boundary operator $\boundary_{p+1}$ is captured by the $(p+1)$-boundary matrix $[\boundary_{p+1}]$ of $K$, which we will denote in brief as $B$.
Notice that $B \in \{-1,0,1\}^{m \times n}$, with $B_{ij} = \pm 1$ when $\sigma_i$ is a face of $\tau_j$ (denoted $\sigma_i \preceq \tau_j$), and is zero otherwise.
This nonzero number is $+1$ if the orientations of $\sigma_i$ and $\tau_j$ agree, and is $-1$ when they are opposite.

We showed that the flat norm problem is NP-hard \cite{IbKrVi2013}.
We cast this problem as an integer linear optimization problem (IP). 
Notice that integer solutions are required, as opposed to real ones, since homology is defined over $\Z$.
Instances of this IP could take exponential time to solve in the worst case.
But an IP can be solved in polynomial time by solving its linear programming (LP) relaxation when its constraint matrix is totally unimodular, i.e., when each of its subdeterminants is in $\{0,\pm 1\}$ \cite{Schrijver1986}.
We showed that the constraint matrix of the flat norm IP is totally unimodular if and only if the boundary matrix $B$ is so.
And $B$ is totally unimodular if and only if $K$ has no relative torsion in dimension $p$.
This condition is satisfied, for instance, when $K$ triangulates a compact, orientable $(p+1)$-manifold, or when it is a $(d+1)$-complex in $\R^{d+1}$ \cite{IbKrVi2013}.

\section{Simplicial Median Shape and Integer Linear Optimization} \label{sec-medshpIP}
\label{sec:simplicial}
Our goal is to study the median shape problem in the simplicial setting, and to formulate it as an integer linear optimization problem.
At the same time, it is not immediately clear whether we would be able to utilize total unimodularity of the boundary matrix $B$, when available.
We present an integer program (IP) for the simplicial median shape problem.
While we are not able to prove that its constraint matrix is totally unimodular when $B$ is so, the LP relaxation of this IP always had an integer optimal solution in all our computational experiments.
Based on this evidence, we believe that the LP relaxation of the median shape IP has integer optimal solution in the case where the volumes of simplices are their default Euclidean masses.

\subsection{Median Shape as an Integer Program} \label{ssec-IPformln}

Let $C_p(K)$ denote the group of $p$-chains of the simplicial complex $K$.
Consider the set of $\nCur$ currents modeled by $p$-chains $\vt_1, \dots, \vt_{\nCur} \in C_p(K)$.
The simplicial median shape $\mdn{\vt}$ is defined as a $p$-chain $\vt \in C_p(K)$ for which the sum of the flat distances between $\vt$ and $\vt_1,\dots, \vt_{\nCur}$, i.e.,
\[ \sum_{h=1}^{\nCur} \rho(\vt,\vt_h) = \sum_{h=1}^{\nCur} \F_{\lambda}(\vt-\vt_h) \]
is minimized:

\begin{align}
  \nonumber
  \mdn{\vt} = &\argmin_{\vt \in C_p(K)} \left\{ \sum_{h=1}^{\nCur} \F_{\lambda}(\vt,\vt_h) \right\} \\ 
  = & \argmin_{\vt,\vr_h \in C_p(K), \, \vs_h \in  C_{p+1}(K)} \left\{ \sum_{h=1}^{\nCur} \left( \sum_{i=1}^m \vol_p(\sigma_i)|r_{hi}| + \lambda \sum_{j=1}^n \vol_{p+1}(\tau_j)|s_{hj}| \right) ~ \bigg| \right.\mbox{\hspace*{1in}} \\
   & \left. \mbox{\hspace*{1.5in}} \vt - \vt_h = \vr_h + \boundary_{p+1} \vs_h; \vt,\vr_h \in \Z^m, \vs_h \in \Z^n,\forall h ~~\Bigg\}, \right. \nonumber
\end{align}
where $\vt, \vr_h \in C_p(K)$  and $\vs_h \in C_{p+1}(K)$, and the constraints capture the flat norm decomposition of $\vt-\vt_h$ for each $h$.
Note that $r_{hi}$ is the $i$th component of $\vr_h$, with similar notation used for $s_{jh}$ and $\vs_h$.
With the volumes of the simplices taken as $\vol_p(\sigma_i) = w_i$ and $\vol_{p+1}(\tau_j)=v_j$, we can cast the median shape problem as the following integer optimization problem.
\begin{equation}
  \label{eq-optprobMedShp}
  \begin{aligned}
    \mbox{minimize } & \sum_{h=1}^{\nCur} \left( \sum_{i=1}^m w_i \abs{r_{hi}} + \lambda  \sum_{j=1}^n v_j \abs{s_{hj}} \right)  \\
     \mbox{subject to } & ~~\vt - \vt_h = \vr_h + B \vs_h, ~h=1,\dots,\nCur \\
    & ~~\vt \in \Z^m,~\vr_h \in \Z^m, \vs_h \in \Z^n,\,h=1,\dots,\nCur.
  \end{aligned}
\end{equation}
The objective function is piecewise linear, and we can linearize the same using extra variables \cite[Pg.~18]{BeTs1997}, and obtain the following integer {\em linear} optimization problem when $w_i, v_j \geq 0$ for all $i,j$.

\begin{equation}
  \label{eq-IPMedShp}
  \begin{aligned}
    \mbox{minimize } & \sum_{h=1}^{\nCur} \left( \sum_{i=1}^m w_i (r^+_{hi} + r^-_{hi}) + \lambda \sum_{j=1}^n v_j (s^+_{hj}+s^-_{hj}) \right)\\
    \mbox{subject to } & ~~\vt - \vt_h = (\vr^+_h - \vr^-_h) + B (\vs^+_h - \vs^-_h), ~h=1,\dots,\nCur, \\
    & ~~ \vr^+_h, \vr^-_h \geq \vzero,~ \vs^+_h,\vs^-_h \geq \vzero,~~h=1,\dots,\nCur, \\
    & ~~\vt \in \Z^m,~~\vr^+_h, \vr^-_h \in \Z^m, \,\vs^+_h,\vs^-_h \in \Z^n,\,h=1,\dots,\nCur.
  \end{aligned}
\end{equation}
When constructing the integer optimization formulation for the median shape {\em with mass regularization} (\cref{eq:defmsregmdn}), we replace the variable vector $\vt$ with a pair of nonnegative variable vectors $\vt^{\pm}$.
In particular, each occurrence of $\vt$ in the constraints is replaced by $\vt^+ - \vt^-$, and the term $\vw^T (\vt^+ + \vt^-)$ is added to the objective function.
With this extension in mind, we work with this pair $\vt^{\pm}$ in our formulation, but do not include the extra terms in the objective function for the default median shape problem.

We obtain the linear programming relaxation of this integer program by relaxing, i.e., ignoring, the integrality constraints.
We are interested in instances for which this linear program is guaranteed to have an integer optimal solution, in which case we can solve the median shape problem in polynomial time.
To this end, we explore when the constraint matrix $A$ of this linear program  transformed to the standard form $A \vx = \vb$ (with $\vx \geq \vzero$) is totally unimodular.
We rewrite the linear programming relaxation (denoted as LP henceforth) of the integer program in Equation (\ref{eq-IPMedShp}) in this standard form, with the structure of the variable vector $\vx$ detailed in the nonnegativity constraint. Unspecified entries are all zeros.

\begin{align}
  \label{eq-LPStdFm}
   \mbox{min} & \,\begin{bmatrix} ~~~\,& ~~~\,& ~[~~\vw~~~ \vw ~~~ \lambda \vv ~ \lambda \vv ] & [~~\vw~~~ \vw ~~\lambda \vv ~~ \lambda \vv ] & \cdots & [~\vw ~~~ \vw ~~~ \lambda \vv ~~ \lambda \vv ] \end{bmatrix} \vx \nonumber \\
    \mbox{ s.t.}   \\
    &
    \begin{bmatrix} \nonumber
        \begin{bmatrix} I & -I \end{bmatrix} &  \begin{bmatrix} -I & I & -B & B \end{bmatrix} \\
        \begin{bmatrix} I & -I \end{bmatrix} &  & \begin{bmatrix} -I & I & -B & B \end{bmatrix} \\
        \vdots &  & & \ddots \\
        \begin{bmatrix} I & -I \end{bmatrix} & & & & \begin{bmatrix} -I & I & -B & B \end{bmatrix} \\
      \end{bmatrix}
      \vx =
      \begin{bmatrix} \vt_1 \\ \vt_2 \\ \vdots \\ \vt_{\nCur} \end{bmatrix} \nonumber \\
      \nonumber \\
      & \,\begin{bmatrix} ~\vt^+ & \vt^- & ~~~\vr^+_1 ~~ \vr^-_1 ~~~ \vs^+_1 ~~~ \vs^-_1 & ~~\vr^+_2 ~~ \vr^-_2 ~~~ \vs^+_2 ~~~ \vs^-_2 & ~\cdots & ~~\vr^+_{\nCur} & \vr^-_{\nCur} & \vs^+_{\nCur} & \vs^-_{\nCur} \end{bmatrix} ~~\geq ~~\vzero. \nonumber 
\end{align}

So as to avoid clutter, notice that we have avoided transposing the individual component vectors, e.g., $\vw$, $\vt^+$, etc., in both the objective function vector as well as in the variable vector $\vx$ in the nonnegativity constraint.

\subsection{Total Unimodularity and the Median Shape LP} \label{ssec-TUMedShpLP}

We study the structure of the constraint matrix $A$ of the median shape LP in Equation (\ref{eq-LPStdFm}) with respect to the total unimodularity of the boundary matrix $B$.
To this end, we utilize several standard matrix operations that preserve total unimodularity, which we present collectively in Lemma \ref{lem-stdTUops}.
But to construct $A$ from $B$, we have to use a series of these operations along with one other matrix operation, which is not guaranteed to preserve total unimodularity.

\begin{lemma}
  \label{lem-stdTUops}
        {\rm {\bfseries (\cite[Pg.~280]{Schrijver1986})}} 
     Total unimodularity of a matrix is preserved under the following operations.
     \begin{enumerate}
       \item \label{lem-TUswap} Permuting rows or columns.
       \item \label{lem-TUtrsps} Taking the transpose.
       \item \label{lem-TUmltby-1} Multiplying a row or column by $-1$.
       \item \label{lem-TUaddzerosgltn} Adding a row or column of all zeros, or adding a row or column with one nonzero that is $\pm 1$.
       \item \label{lem-TUrep} Repeating a row or column.
     \end{enumerate}
\end{lemma}

\noindent The extra operation we need is a composition involving the identity matrix, which we define as the {\em $I$-sum}.

\begin{definition}
  \label{def-Isum}
  For an integer $\nCur \geq 1$, the {\bf ${\bm \nCur}$-fold ${\bm I}$-sum} of an $m \times n$ matrix $A$ is the $(m\nCur+n) \times n\nCur$ matrix
  \begin{equation} \tag{$I$-sum} \label{eq-Isum}
    \Isum_{\nCur} A :=
    \begin{bmatrix}
      I & I & \cdots & I \\
      A &    \\
      & A  \\
      &   & \ddots \\
      &   &        & A
    \end{bmatrix},
  \end{equation}
  where $I$ is the $n \times n$ identity matrix, $\nCur$ copies of which  are included in the top row. Unspecified entries are zero.
\end{definition}

\smallskip
Several versions of connected sums are already known in the context of total unimodularity.
Schrijver presents $\nCur$-sums for $\nCur=1,2,3$ \cite[Pg.~280]{Schrijver1986}.
In a related context, $\nCur$-sums are used in the decomposition of regular matroids \cite{Truemper1992,Ox2006,Co2001}.
At the same time, our $I$-sum is different from these matrix connected sums, and also from (the matrix equivalents of) the matroid $\nCur$-sums.
But unlike the $\nCur$-sums which preserve total unimodularity, the \ref{eq-Isum} may not do so.

\smallskip
\begin{lemma}
  \label{lem-IsumNotTU}
  The $\nCur$-fold $I$-sum is not guaranteed to preserve total unimodularity.
\end{lemma}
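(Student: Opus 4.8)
The plan is to exhibit an explicit counterexample: a small totally unimodular matrix $A$ for which some $\nCur$-fold $I$-sum $\Isum_{\nCur} A$ has a subdeterminant outside $\{0,\pm 1\}$. Since total unimodularity is a property of \emph{all} square submatrices, it suffices to produce one square submatrix of $\Isum_{\nCur} A$ whose determinant is $\pm 2$. The natural source of such a submatrix is the classical non-TU matrix, namely the vertex-edge incidence matrix of an odd cycle (e.g.\ the triangle $K_3$), which has determinant $\pm 2$. So the first step is to choose $A$ so that assembling the blocks $A$ together with the identity blocks $I$ in the top row reconstructs (up to row/column selection and sign changes, which preserve determinant absolute values by Lemma \ref{lem-stdTUops}) the incidence matrix of an odd cycle.

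Concretely, I would take $\nCur = 3$ and let $A$ be a $1 \times 2$ matrix, for instance $A = \begin{bmatrix} 1 & 1 \end{bmatrix}$ (or $\begin{bmatrix} 1 & -1\end{bmatrix}$ depending on the sign bookkeeping one wants). This $A$ is trivially totally unimodular. Then
\[
  \Isum_{3} A \;=\;
  \begin{bmatrix}
    I_2 & I_2 & I_2 \\
    A & & \\
    & A & \\
    & & A
  \end{bmatrix}
\]
is a $(3 + 2)\times 6 = 5 \times 6$ matrix. The key step is to select three of the six columns — one column from each of the three $I_2$-pairs, chosen so that the three "$A$-rows" each see exactly two of the selected columns with a nonzero entry, and the top two identity rows each see exactly two as well — so that the resulting $5\times 5$ (after also dropping the redundant identity row, or by choosing the right $3\times 3$ submatrix among the rows that are actually "used") is, up to permutation and sign flips, the incidence matrix of the $3$-cycle. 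A direct cofactor expansion then gives determinant $\pm 2$, certifying that $\Isum_3 A$ is not totally unimodular. I would write out this $3\times 3$ (or $5\times 5$) submatrix explicitly and compute its determinant, since that computation is short and is the whole content of the lemma.

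The only real subtlety — and the step to be careful with — is the index bookkeeping: making sure the chosen columns of $\Isum_{\nCur} A$ genuinely realize an odd-cycle incidence pattern, i.e.\ that each selected column has exactly two nonzeros among the selected rows and the "parity" works out so the determinant is $2$ rather than $0$. This is exactly the phenomenon that makes odd cycles fail TU, so once the columns are lined up correctly the determinant is forced. I would double-check the example by noting that $A = \begin{bmatrix}1 & 1\end{bmatrix}$ is precisely the incidence-type row of an edge on two vertices, so three copies of $A$ stacked block-diagonally are three disjoint edges, and the top $I_2$ blocks glue the six endpoints into three identified pairs — choosing the gluing so the three edges close up into a triangle yields the odd cycle. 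Since no claim is made about \emph{which} $\Isum$ fails (only that the operation is not guaranteed to preserve TU), a single such instance completes the proof. $\Box$
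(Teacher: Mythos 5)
Your overall strategy---exhibit a small totally unimodular matrix whose $I$-sum has a subdeterminant of absolute value $2$---is the right one and is what the paper does, but the specific counterexample you propose does not work, and it fails at exactly the ``index bookkeeping'' step you flagged as delicate. For $A = \begin{bmatrix} 1 & 1 \end{bmatrix}$ and $\nCur = 3$, the matrix
\[
\Isum_3 A \;=\;
\begin{bmatrix}
1 & 0 & 1 & 0 & 1 & 0\\
0 & 1 & 0 & 1 & 0 & 1\\
1 & 1 & 0 & 0 & 0 & 0\\
0 & 0 & 1 & 1 & 0 & 0\\
0 & 0 & 0 & 0 & 1 & 1
\end{bmatrix}
\]
has exactly two ones in every column: one among the two identity rows and one among the three $A$-rows. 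It is therefore the vertex--edge incidence matrix of a \emph{bipartite} graph (namely $K_{2,3}$, with the identity rows on one side and the $A$-rows on the other), and the incidence matrix of a bipartite graph is totally unimodular; no $3\times 3$ or $5\times 5$ submatrix can have determinant $\pm 2$. The misconception is that the identity blocks ``glue the six endpoints into three identified pairs'': they do not identify columns, they contribute additional \emph{rows} (vertices), so each edge acquires one endpoint upstairs and one downstairs and no odd cycle can ever close up. The same obstruction kills any $1\times n$ choice of $A$, signed or not: each column of $\Isum_{\nCur} A$ then has at most two nonzeros, and the row partition (identity rows versus $A$-rows, with the obvious sign adjustment when some $a_i=-1$) satisfies the Heller--Tompkins/Ghouila--Houri condition, so $\Isum_{\nCur} A$ is always totally unimodular in that case.

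To get a genuine violation you need $A$ to have at least two rows, so that columns of $\Isum_{\nCur} A$ can carry three or more nonzeros and richer forbidden configurations can appear. The paper's proof takes a particular totally unimodular $3\times 4$ matrix $A$ and shows that $\Isum_2 A$ (a $10\times 8$ matrix) contains a $6\times 6$ submatrix which, after permuting and scaling rows and columns, is a M\"obius cycle matrix with determinant $-2$. So the lemma is true, but your example does not prove it; the counterexample must be replaced by one with a taller $A$ and an explicitly verified bad subdeterminant.
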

\smallskip

\begin{proof}
  We show by example that an \ref{eq-Isum} of a totally unimodular matrix is itself not totally unimodular.
  Consider the following $3 \times 4$ matrix $A$, and its $2$-fold \ref{eq-Isum} $\Isum_2 A$, which is a $10 \times 8$ matrix.
  The elements of a particular $6 \times 6$ submatrix $S$ of $\Isum_2 A$ are shown in bold.
  $S$ is formed using rows $1,4,5,7,8,9$ and columns $1,2,4,5,7,8$ of its parent matrix.
  We present $S$ after rearranging its rows and columns in the order $1,7,5,4,8,9$ and $1,2,4,8,7,5$, respectively, from $\Isum_2 A$.
  
  \[
    A =
    \begin{bmatrix}
      0 & 1 & \m 1 & 1 \\
      1 & 0 &    1 & 0 \\
      1 & 1 &    0 & 0 
    \end{bmatrix}, ~~~
    \Isum_2 A =
    \begin{bmatrix}
      \bm{1} & \bm{0} & 0 & \bm{0} & \bm{1} & 0 & \bm{0} & \bm{0} \\
      0 & 1 & 0 & 0 & 0 & 1 & 0 & 0 \\
      0 & 0 & 1 & 0 & 0 & 0 & 1 & 0 \\
      \bm{0} & \bm{0} & 0 & \bm{1} & \bm{0} & 0 & \bm{0} & \bm{1} \\
      \vspace*{-0.1in} \\
      \bm{0} & \bm{1} & \m 1 & \bm{1} & \bm{0} & 0 & \bm{0} & \bm{0} \\
      1 & 0 &    1 & 0 & 0 & 0 & 0 & 0 \\
      \bm{1} & \bm{1} &    0 & \bm{0} & \bm{0} & 0 & \bm{0} & \bm{0} \\
      \vspace*{-0.1in} \\
      \bm{0} & \bm{0} & 0 & \bm{0} & \bm{0} & 1 & \m \bm{1} & \bm{1} \\
      \bm{0} & \bm{0} & 0 & \bm{0} & \bm{1} & 0 &    \bm{1} & \bm{0} \\
      0 & 0 & 0 & 0 & 1 & 1 &    0 & 0 
    \end{bmatrix},~~\mbox{ and }~
    S =
    \begin{bmatrix}
      1 & 0 & 0 & 0 &    0 & 1 \\
      1 & 1 & 0 & 0 &    0 & 0 \\
      0 & 1 & 1 & 0 &    0 & 0 \\
      0 & 0 & 1 & 1 &    0 & 0 \\
      0 & 0 & 0 & 1 & \m 1 & 0 \\
      0 & 0 & 0 & 0 &    1 & 1 
    \end{bmatrix}.
  \]
  It can be checked that $A$ is totally unimodular.
  But $\det S = -2$, showing that $\Isum_2 A$ is not totally unimodular.
  In fact, $S$ is a {\em M\"obius cycle matrix} (MCM) of size $6$ (after scaling three rows/columns by $-1$) \cite{DeHiKr2011}, whose determinants are equal to $2$ in absolute value.
\end{proof}

We can construct the constraint matrix $A$ in \cref{eq-LPStdFm} using a sequence of these matrix operations.
First, we construct the matrix $B' = \begin{bmatrix} -I & I & -B & B \end{bmatrix}$ from $B$ by repeating all columns of $B$ and scaling these repeated columns by $-1$ to get $-B$, and then adding the $2m$ columns of $I$ and $-I$.
These are the operations \ref{lem-TUrep}, \ref{lem-TUmltby-1}, and \ref{lem-TUaddzerosgltn} in \cref{lem-stdTUops}.
We then construct the $\nCur$-fold \ref{eq-Isum} of the transpose of $B'$ to get $\Isum_{\nCur} B'^T$, and then take its transpose.
Finally, we repeat the columns formed by the $\nCur$ copies of the $m$-identity matrix, scale these columns by $-1$ to get $\nCur$ copies of $-I$, and swap the columns corresponding to the $\nCur$ copies of $-I$ and those corresponding to the $\nCur$ copies of $I$.
Apart from the \ref{eq-Isum}, we used the operations \ref{lem-TUtrsps} and \ref{lem-TUswap} in \cref{lem-stdTUops} in the previous steps.

All operations used in constructing $A$ from $B$ preserve total unimodularity, except the \ref{eq-Isum}.
As such, we are not guaranteed integer solutions for the median shape LP even when $B$ is totally unimodular.
Nonetheless, we have always obtained integer optimal solutions for all instances of the median shape LP we tried (see Section \ref{sec-compexpr}).

\subsection{Generalizations of the Median Shape LP} \label{ssec-genMedShLP}

We can modify the median shape LP in \cref{eq-LPStdFm} to find a mass-regularized simplicial median shape.
We add $\mu\vw^T (\vt^+ + \vt^-)$ to the objective function, while the rest of the LP remains unchanged.
The scaling factor for the mass of $\vt$ is chosen as $\mu \geq 0$, and is typically taken to be smaller than $\lambda$.
The objective function vector thus gets the additional terms $[\mu\vw~~\mu\vw]$ in the beginning.

Another modification to the objective function lets us formulate the generalized {\em weighted} simplicial median shape problem, where we seek $\vt \in C_p(K)$ that minimizes
\begin{equation} \label{eq-WtdMdn}
 \sum_{h=1}^{\nCur} \alpha_h \rho(\vt,\vt_h) = \sum_{h=1}^{\nCur} \alpha_h \F_{\lambda}(\vt-\vt_h),
~~\mbox{ where } \alpha_h \geq 0 \,\forall h,
 ~\mbox{ and } \sum_{h=1}^{\nCur} \alpha_h = 1.
\end{equation}
Notice that when $\alpha_h=1$ (and the remaining $\alpha_i=0$), we get $\mdn{\vt}=\vt_h$.
As each of the $\alpha_h$'s varies from $0$ to $1$, we obtain each input chain and also a series of ``in between'' chains as the weighted median.

We set the objective function vector in \cref{eq-LPStdFm} as follows (again, we avoid transpose notation to avoid clutter):
\[
\vc = \begin{bmatrix}
  \vzero~ & ~\vzero~ & \alpha_1 [~\vw ~ \vw ~~ \lambda \vv ~~ \lambda \vv] & \alpha_2 [~\vw ~ \vw ~~ \lambda \vv ~~ \lambda \vv]  & \cdots & \alpha_{\nCur} [~\vw ~ \vw ~~ \lambda \vv ~~ \lambda \vv]
\end{bmatrix}.
\]
While we do need each $\alpha_h$ be nonnegative for the formulation to work, the correctness of the LP is independent of the requirement $\sum_h \alpha_h = 1$.
We use the latter observation in analyzing the complexity of the simplicial median shape problem (see below).

We could also compute a mass-regularized weighted simplicial median shape by replacing the first two zero vectors corresponding to $\vt^{\pm}$ with two copies of $\mu\vw$:
\begin{equation}
  \label{eq-MRWSMSP}
  \vc =
  \begin{bmatrix}
    [\mu \vw ~~\mu \vw] & \alpha_1 [~\vw ~ \vw ~~ \lambda \vv ~~ \lambda \vv] & \alpha_2 [~\vw ~ \vw ~~ \lambda \vv ~~ \lambda \vv]  & \cdots & \alpha_{\nCur} [~\vw ~ \vw ~~ \lambda \vv ~~ \lambda \vv]
  \end{bmatrix}.
\end{equation}

\subsubsection{Median shape on generalized spaces}
\label{sssec-medshpgenspc}
Yet another natural generalization permitted by the simplicial approach is to consider median shapes over simplicial complexes that are more general that the corresponding spaces specified in the continuous definition.
With input currents in $\R^d$, the median shape as well as the associated currents could live possibly in all of $\R^d$.
On the other hand, the simplicial median shape could be studied over simplicial complexes $K$ whose underlying spaces are nontrivial subspaces of $\R^d$, i.e., with nontrivial homology.
Notice that we do not have to modify the definition of the simplicial median shape in order to consider such $K$.
For instance, we could study the median shape of chains on the surface of a sphere or a torus, as we illustrate using computations (see \cref{sec-compexpr}).

\subsection{Complexity of Simplicial Median Shape} \label{ssec-cplxtySMS}

To analyze the computational complexity of the simplicial median shape problem (SMSP), we consider a decision version of the most general SMSP we have introduced, which is the mass-regularized weighted simplicial median shape problem (\mrwsmsp) --- see \cref{eq-MRWSMSP}.
We denote this problem as the {\em decision}-MRWSMSP, or \dmrwsmsp.
Consider $\nCur$ input $p$-chains $\vt_1,\dots,\vt_{\nCur}$, the $p$-chain $\vt$, and the $\nCur$ pairs of $p$- and $(p+1)$-chains $(\vr_1, \vs_1), \dots, (\vr_{\nCur}, \vs_{\nCur})$, all in $K$, such that $\vt - \vt_h = \vr_h + [\boundary_{(p+1)}(K)] \vs_h$ for each $h=1,\dots,\nCur$.
Then for given set of parameters $\valpha = [\alpha_1~\dots~\alpha_{\nCur}] \geq \vzero, \lambda \geq 0$, and $\mu \geq 0$, we define the following function:
\begin{equation}
  \label{eq-f}
  \begin{array}{ll}
    f_{(\valpha,\lambda,\mu)}(\vt,\vt_1,\dots,\vt_{\nCur}) & = ~\mu \left( \sum_{i=1}^m w_i |t_i| \right) \\
    & ~~+ \alpha_1 \left(\sum_{i=1}^m w_i |r_{1i}| + \lambda \sum_{j=1}^n v_j |s_{1j}| \right) + ~~ \dots \\
    & ~~+ \alpha_{\nCur} \left(\sum_{i=1}^m w_i |r_{ki}| + \lambda \sum_{j=1}^n v_j |s_{kj}| \right).
  \end{array}
\end{equation} 
 Notice that $f_{(\valpha,\lambda,\mu)}(\vt,\vt_1,\dots,\vt_{\nCur})$ corresponds to the objective function of the median shape LP (\cref{eq-LPStdFm}) with the coefficients for \mrwsmsp \ (\cref{eq-MRWSMSP}).
In particular, we do not require that $\sum_h \alpha_h = 1$.
Also, we assume all parameters involved, i.e., the entries of $\vw, \vv, \valpha$, as well as $\lambda$ and $\mu$, are rational.

In the {\em optimal homologous chain problem} (OHCP), we seek to find a chain with the minimal total weight in the same homology class as the input chain in a finite simplicial complex.
The (decision version of) OHCP is known to be NP-complete \cite[Theorem 1.4]{DuHi2011}.
We reduce OHCP to a special case of \dmrwsmsp \ with a single input chain, thus showing that \dmrwsmsp \ is NP-complete as well.
The default, i.e., optimization, version of MRWSMSP consequently turns out to be NP-hard.

\begin{definition}
  \label{def-dmrwsmsp}
  (\dmrwsmsp)
  Given $\nCur$ $p$-chains $\vt_1, \dots, \vt_{\nCur}$ in a finite $q$-dimensional simplicial complex $K$ (for $p \leq q-1$), nonnegative rational parameters $\valpha = [\alpha_1~\dots~\alpha_{\nCur}],\, \lambda, \mu$, and a rational number $f_0 \geq 0$,
  do there exist $\nCur$ pairs of $p$- and $(p+1)$-chains  $(\vr_1, \vs_1), \dots, (\vr_{\nCur}, \vs_{\nCur})$ and a $p$-chain $\vt$, all in $K$, such that  $f_{(\valpha,\lambda,\mu)}(\vt,\vt_1,\dots,\vt_{\nCur}) \leq f_0$, where $\vt - \vt_h = \vr_h + [\boundary_{p+1}(K)] \vs_h$ for $h=1,\dots,\nCur$?
\end{definition}
  
\smallskip
\begin{lemma}
  \label{lem-npcmplt}
  \dmrwsmsp \ is NP-complete, and \mrwsmsp \ is NP-hard.
\end{lemma}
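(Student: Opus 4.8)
The plan is to prove NP-completeness of \dmrwsmsp{} by the usual two-part argument: membership in NP, followed by a hardness reduction from the decision version of OHCP, which is stated to be NP-complete \cite[Theorem 1.4]{DuHi2011}. For membership, given a candidate solution $(\vt, (\vr_1,\vs_1),\dots,(\vr_{\nCur},\vs_{\nCur}))$, one checks in polynomial time that each constraint $\vt - \vt_h = \vr_h + [\boundary_{p+1}(K)]\vs_h$ holds and that $f_{(\valpha,\lambda,\mu)}(\vt,\vt_1,\dots,\vt_{\nCur}) \leq f_0$; since all parameters are rational and $K$ is finite, the certificate has polynomial size and the arithmetic is polynomial. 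One should note the standard subtlety that the optimal $\vt, \vr_h, \vs_h$ can be taken to have polynomially-bounded bit-size (this follows from the fact that the median shape problem is a rational linear program once the sign patterns are fixed, or alternatively by a direct argument bounding the chains appearing in an optimal flat-norm decomposition), so that a ``yes'' instance admits a polynomial-size witness.

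For hardness, I would reduce from OHCP. Recall an OHCP instance: a finite simplicial complex $K$, a $p$-chain $\vt_1$, weights on the $p$- and $(p+1)$-simplices, and a threshold; we ask whether there is a $p$-chain homologous to $\vt_1$ of weight at most the threshold. The idea is to build a \dmrwsmsp{} instance with a single input chain $\nCur = 1$, taking $\vt_1$ to be exactly the OHCP input chain, the same complex $K$, the same simplex weights $\vw, \vv$, and the scale parameter $\lambda$ large enough that the flat-norm decomposition of $\vt - \vt_1$ is forced to take $\vr_1 = \vzero$ (so that $\vt - \vt_1 = [\boundary_{p+1}(K)]\vs_1$, i.e., $\vt$ is homologous to $\vt_1$). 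Concretely one sets $\lambda$ larger than the total weight $\sum_i w_i$ of all $p$-simplices, which makes any decomposition with $\vs_1 \neq \vzero$ cost more than a purely ``in-chain'' representative whenever such a representative is available; since $\vt_1$ itself is homologous to $\vt_1$ with $\vr_1 = \vzero$, the optimal decomposition has $\vr_1 = \vzero$. Then set $\valpha = [1]$ and $\mu = 0$, so that $f_{(\valpha,\lambda,\mu)}(\vt,\vt_1) = \sum_i w_i|r_{1i}| + \lambda\sum_j v_j |s_{1j}|$ reduces, on optimal decompositions, to $\lambda \cdot \vol_{p+1}(\vs_1)$ — but this is not quite the OHCP objective, which measures $\vol_p(\vt)$. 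To fix this, I would instead keep $\mu > 0$ and $\lambda$ large: put $\mu = 1$ (or any fixed positive rational), take $\lambda$ huge, and $\valpha = [1]$; then on optimal decompositions $f = \sum_i w_i |t_i| + \lambda \sum_j v_j|s_{1j}|$, and for $\lambda$ sufficiently large relative to $f_0$ this is at most $f_0$ iff $\vs_1$ can be chosen with $\vol_{p+1}(\vs_1) = 0$, i.e., $\vt$ homologous to $\vt_1$, and $\vol_p(\vt) \leq f_0$. Setting the \dmrwsmsp{} threshold equal to the OHCP threshold then makes the two instances equivalent. The reduction is clearly polynomial in the size of $K$ and the numbers involved (one needs $\lambda$ with polynomially many bits, which is fine since $f_0$ and the weights are given in binary and a bound like $\lambda > (1 + f_0)\cdot(\text{total weight})$ suffices).

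The main obstacle — and the step requiring the most care — is pinning down exactly how large $\lambda$ (and the choice of $\mu$) must be so that the forced structure of the optimal flat-norm decomposition genuinely encodes ``$\vt$ is homologous to $\vt_1$ with small $p$-volume,'' and verifying that this bound is only polynomially large. One has to argue that in any optimal decomposition of $\vt - \vt_1$ for a ``yes'' \dmrwsmsp{} instance, the $(p+1)$-chain $\vs_1$ actually has zero volume (not merely small), which uses that $\vol_{p+1}(\tau_j)$ are fixed positive rationals bounded below, so a nonzero $\vs_1$ contributes at least $\lambda \cdot \min_j v_j > f_0$. A secondary point is that the reduction should be stated for the decision problem with a single input chain, and one must confirm that this special case is indeed an instance of \dmrwsmsp{} as defined (it is, with $\nCur = 1$). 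I expect the rest — membership in NP and the polynomial-time bound on the reduction — to be routine given the earlier machinery in the paper on the simplicial flat norm and the LP formulation.
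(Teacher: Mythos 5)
Your NP-membership argument is fine and matches the paper's (the paper does not even bother with the witness-size remark you add). The hardness reduction, however, has its logic inverted and does not work as written. In the flat-norm decomposition $\vt - \vt_1 = \vr_1 + [\boundary_{p+1}(K)]\vs_1$, the statement ``$\vt$ is homologous to $\vt_1$'' is encoded by $\vr_1 = \vzero$, not by $\vs_1 = \vzero$; indeed \emph{every} $\vt$ admits the trivial decomposition $\vr_1 = \vt - \vt_1$, $\vs_1 = \vzero$. So to simulate OHCP you must make the $\vr_1$-term prohibitively expensive while leaving $\vs_1$ free. Taking $\lambda$ huge does exactly the opposite: it forces $\vs_1 = \vzero$ in any sub-threshold solution, whence $\vr_1 = \vt - \vt_1$, and your claimed equivalence ``$\vol_{p+1}(\vs_1)=0$ iff $\vt$ homologous to $\vt_1$'' is false. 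Moreover, in your final objective you silently drop the term $\sum_i w_i\abs{r_{1i}}$, which under your parameter choices equals $\sum_i w_i\abs{t_i - t_{1i}}$ and is generally nonzero; what your instance actually minimizes is $\vol_p(\vt) + \vol_p(\vt - \vt_1)$ over \emph{unconstrained} $\vt$, which has nothing to do with the homology class of $\vt_1$.

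The fix, and what the paper does, is to use the weight $\alpha_1$ rather than $\lambda$ as the forcing knob: set $\nCur = 1$, $\mu = 1$, $\lambda = 0$ (so the $\vs_1$-term costs nothing and homologous deformations are free), and take $\alpha_1$ large (the paper uses $\alpha_1 = 2 m w_{\rm max} t'_{\rm max} + 1$), so that any solution with $\vr_1 \neq \vzero$ already exceeds every nontrivial threshold $f_0$. Then $\vr_1 = \vzero$ is forced, the constraint becomes $\vt = \vt_1 + [\boundary_{p+1}(K)]\vs_1$ (the OHCP homology constraint), and the surviving objective $\mu \sum_i w_i \abs{t_i} = \vol_p(\vt)$ is exactly the OHCP objective. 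In this formulation $\lambda$ only ever penalizes the spanning chain $\vs_1$, so it can never be used to enforce the homology constraint.
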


\begin{proof}
  \dmrwsmsp \ lies in NP as we can compute $f_{(\valpha,\lambda,\mu)}(\vt,\vt_1,\dots,\vt_{\nCur})$ described in \cref{eq-f} in polynomial time when given the vectors $\vt$ and $(\vr_1, \vs_1),$ $\dots, (\vr_{\nCur}, \vs_{\nCur})$, all in $K$, satisfying the specified conditions.
  On the other hand, given an instance of the decision version of OHCP with input $p$-chain $\vt'$, we can reduce it to an instance of \dmrwsmsp \ as follows.
  We set $\nCur=1$, $\vt_1 = \vt'$, $\lambda=0$ and $\mu=1$ for the instance of \dmrwsmsp.
  Let $t'_{\rm max} = \max_{i=1}^m |t'_i|$ be the largest entry in $\vt'$ in absolute value, and let $w_{\rm max} = \max_{i=1}^m w_i$ be the largest weight of any $p$-simplex (we assume $w_i \geq 0$).
  We set $\alpha_1 = 2 m w_{\rm max} t_{\rm max} + 1$.
  This value of $\alpha_1$ insures that $\vr_1 = \vzero$ for nontrivial choices of $f_0$, giving $\vt = \vt_1 +  [\boundary_{(p+1)}(K)] \vs_1$, which is the required homology constraint of the OHCP.
  The result follows since OHCP is NP-complete.
\end{proof}

\begin{remark}
  Even though we have shown that \dmrwsmsp \ is NP-hard in general, the case for particular choices of the parameters $\valpha,\lambda,\mu$ could well be different.
  In fact, when $\mu \gg \lambda \gg 1$ and $\alpha_h = 1$ for all $h$, the problem becomes easy---the median shape is the empty chain in this case.
\end{remark}

\section{Computational Experiments} \label{sec-compexpr}
\label{sec:compute}

We present results from computational experiments on the simplicial median shape problem.
We solve the LP instances using CPLEX \cite{cplex} on a typical laptop machine.
We considered instances where the simplicial complex $K$ is a rectangle in $\R^2$ (i.e., its underlying space is homeomorphic to the closed $2$-disc), the surface of a sphere and a torus in $\R^3$, as well as the closed Euclidean ball in $\R^3$.
The chains considered were $1$-dimensional in these instances.
Thus the problem had a codimension of $1$ in all cases except in that of the $3$-ball, where the codimension was $2$.
The boundary matrix in question ($[\boundary_2(K)]$) is guaranteed to be totally unimodular for the codimension $1$ cases, but is typically not totally unimodular for the codimension $2$ case.
As we observed earlier, the constraint matrix of the median shape LP may not be totally unimodular even when the boundary matrix is so (see \cref{lem-IsumNotTU}).
Nonetheless, we obtained integer optimal solutions for the median shape LPs in {\em each} case.
Solving the median shape LPs took from a few seconds to several minutes, depending on the size of the simplicial complex considered.

\subsection{Instances in 2D} \label{ssec-2dinst}
\cref{fig-2D2crvs,fig-2D3crvs} show a mesh in 2D with $3851$ edges and $2510$ triangles.
We consider one set with two input $1$-chains (\cref{fig-2D2crvs}) and a second set with three $1$-chains (\cref{fig-2D3crvs}).
We show the mass-regularized median shape on the same mesh in each case.
We chose $\lambda=10^{-3}$ and $\mu = 10^{-5}$.

The median shape curve captures the intuition of the average of input curves ($1$-chains) in both cases.
In \cref{fig-2D2crvs}, the median curve stays in the middle of the two input curves all along, and agrees with the inputs in sections where they coincide.
With three input curves (\cref{fig-2D3crvs}), the median is composed of sections of whichever curve is in the middle (of the three) across the domain.
Note that for an odd number of input curves, it is not necessary that the median curve is always composed of pieces of input curves in the middle across the domain---it just happens to be this way for the instance in \cref{fig-2D3crvs} for the specific choices of $\lambda, \mu$, and mesh parameters.

\begin{figure}[ht!]
  \centering
  \includegraphics[width=\textwidth]{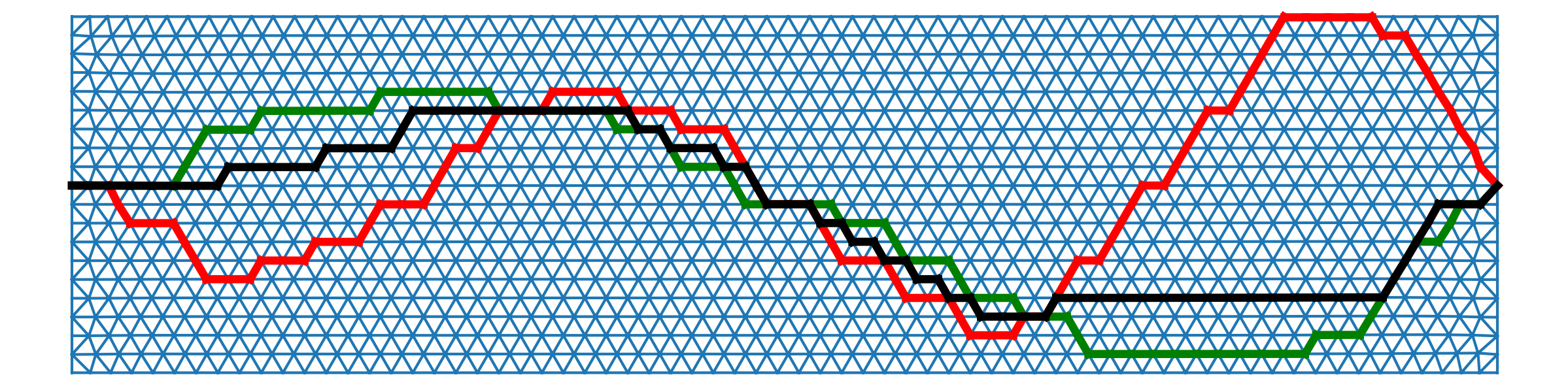}
  \caption{Simplicial median shape of two curves in 2D.
    The input curves are shown in green and red, while the median shape curve is shown in black.   \label{fig-2D2crvs}
    }
\end{figure}

\begin{figure}[ht!]
  \centering
  \includegraphics[width=0.92\textwidth]{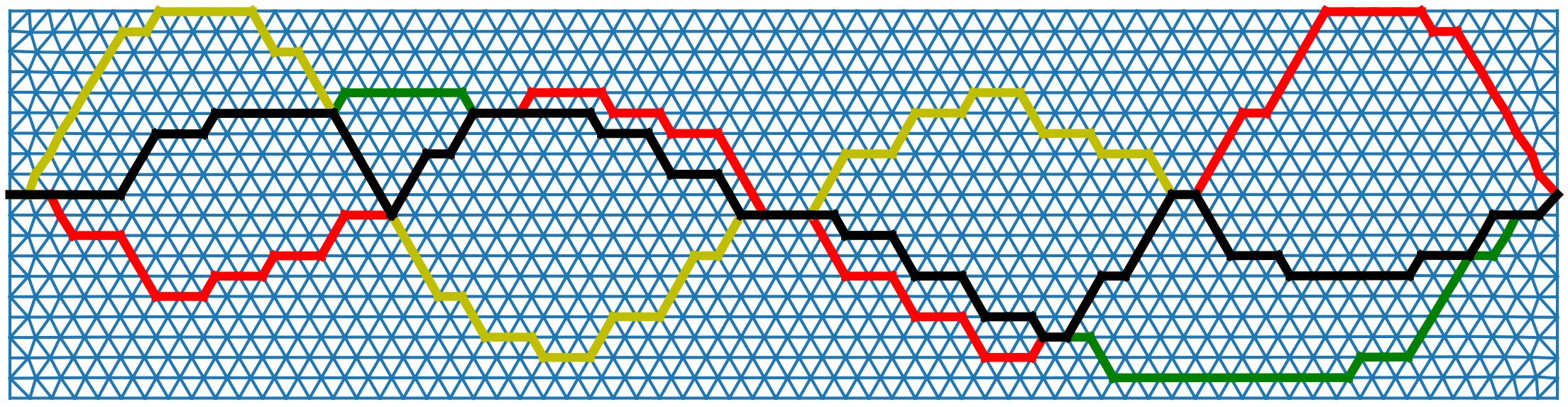}
  \caption{Simplicial median shape of three curves in 2D.
    The third input curve in yellow is added to the two in green and red, which are the input curves in \cref{fig-2D2crvs}.
    The median shape curve is shown in black.   \label{fig-2D3crvs}
  }
\end{figure}

\subsection{Instances in 3D} \label{ssec-3dinst}

We present instances with codimensions $1$ and $2$ in $\R^3$.
In all these instances, the input currents had shared boundaries.
We constructed a $3$-complex tetrahedralizing a $3$-ball, consisting of $45,768$ tetrahedra.
The $2$-skeleton of this complex had $93,149$ triangles and $55,860$ edges.
We considered three input curves, each of which went from the North pole to the South pole, meeting roughly at $120$ degree angles at both poles.
We started with the curves living on the surface, i.e., on the $2$-sphere, and added some noise so that they wiggled into the interior of the $3$-ball at places.
One would expect the median shape to be the diameter connecting the North and south poles, and our computations agreed with this intuition.
\begin{figure}[ht!]
  \centering
  \includegraphics[scale=0.36]{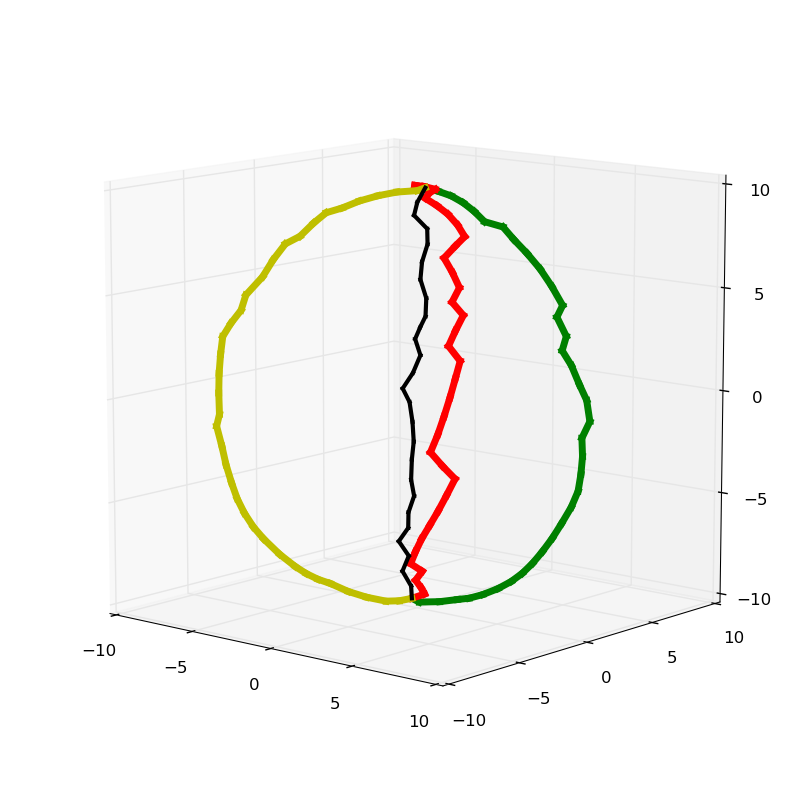}
  \includegraphics[scale=0.36]{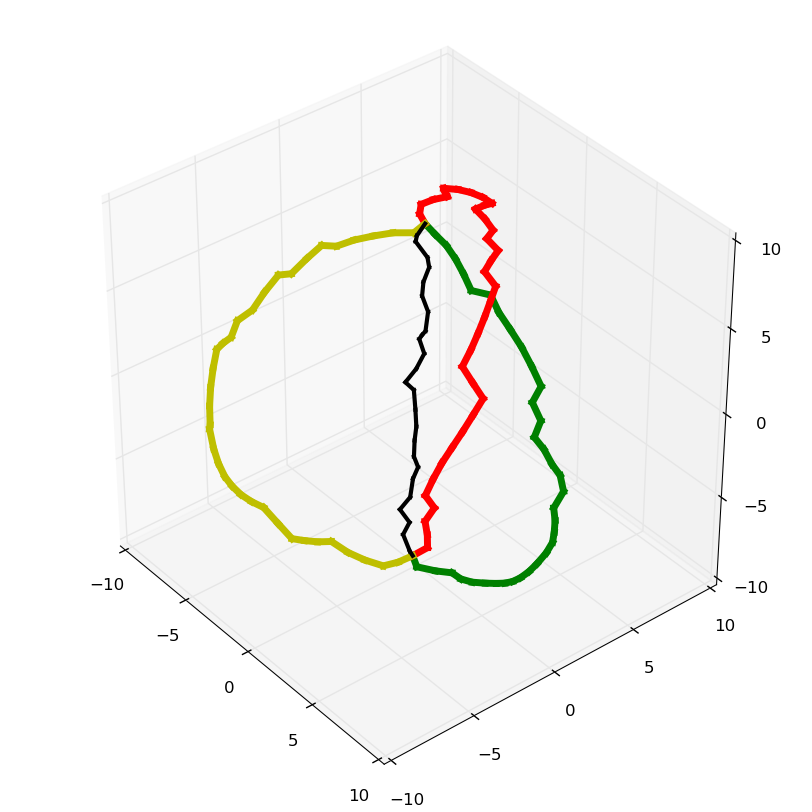}
  \caption{Tow views of the simplicial median shape of three curves in a $3$-ball in $\R^3$.
    The input curves are shown in green, red, and yellow, while the median shape curve is shown in black.   \label{fig-3ball3crvs}
    }
\end{figure}

We next considered two similar input curves (between the poles), and solved the generalized weighted simplicial median shape problem over the $2$-sphere, i.e., the surface of the $3$-ball.
The $2$-complex triangulating the $2$-sphere had $8,695$ edges and $5,788$ triangles.
As we vary the weights $[\alpha_1 ~ \alpha_2]$ from $[1~~0]$ to $[0~~1]$, the median shape changes from the first to the second input curve, all along the surface of the sphere (see \cref{fig-2sphrdeform}).

\begin{figure}[ht!]
  \centering
  \includegraphics[scale=0.38]{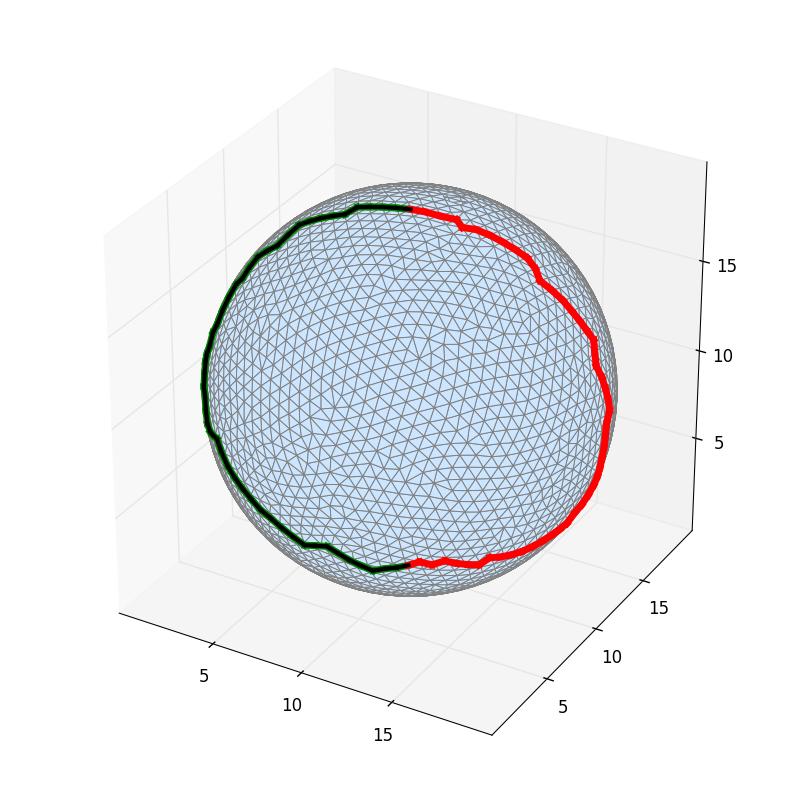}
  \includegraphics[scale=0.38]{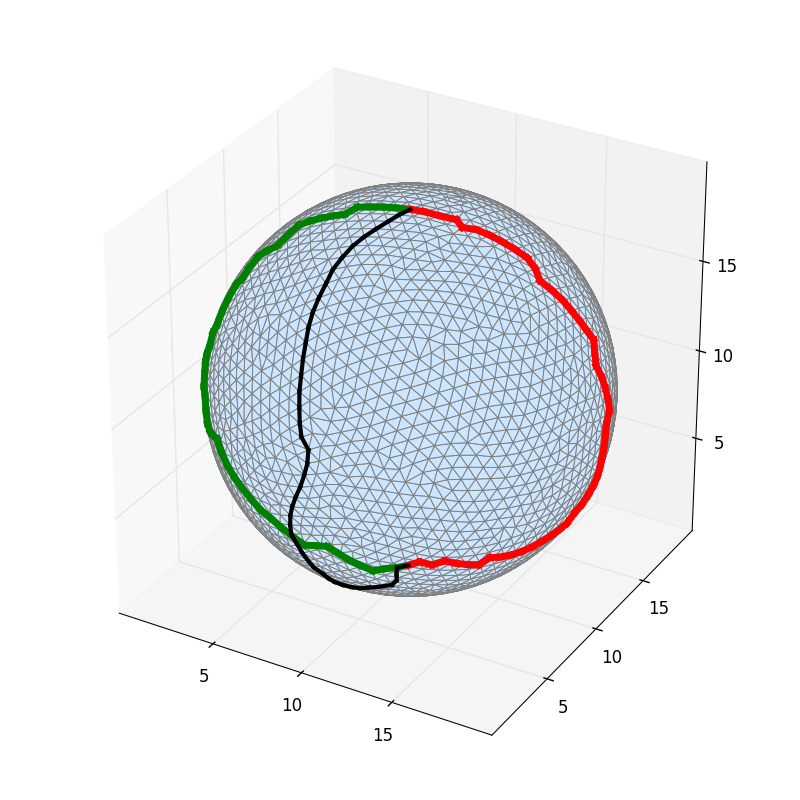}
  \includegraphics[scale=0.38]{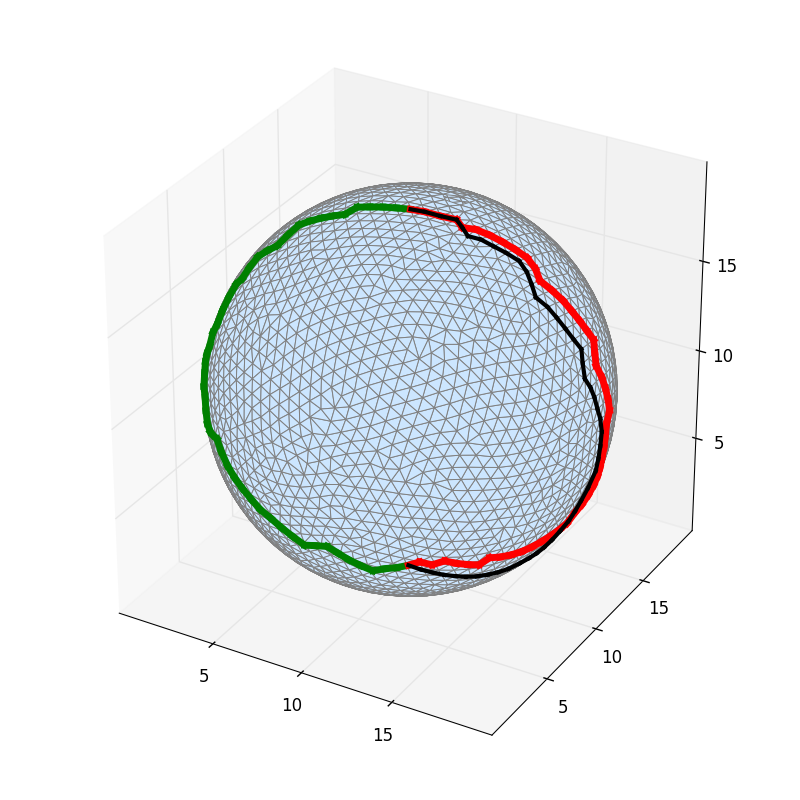}
  \includegraphics[scale=0.38]{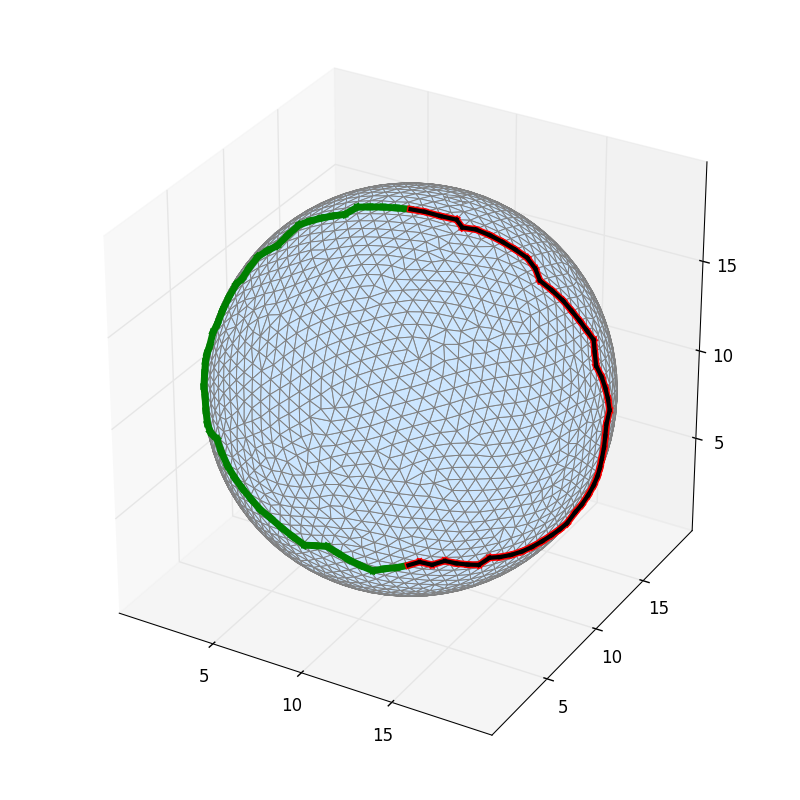}
  \caption{Weighted median shape of $2$ pole-to-pole curves on a $2$-sphere.
    The input curves are shown in green and red, while the median shape curve is shown in black.
    The weights $[\alpha_!~\alpha_2]$ for the two input curves were chosen as $[1~0]$ in the top left figure and $[0~1]$ in the bottom right figure.
    The weights are $[0.5~0.5]$ in the top right figure and $[0.1, 0.9]$ in the bottom left figure.   \label{fig-2sphrdeform}
    }
\end{figure}

To further demonstrate the versatility of simplicial flat norm (as described in \cref{sssec-medshpgenspc}), we present computations on a torus in \cref{fig-torus2crvs}.
The triangulation of the torus had $5,007$ edges and $3,336$ triangles.
We consider two cases each with a pair of input currents---a pair of handle loops and another pair of tunnel loops.

\begin{figure}[ht!]
  \centering
  \includegraphics[scale=0.36]{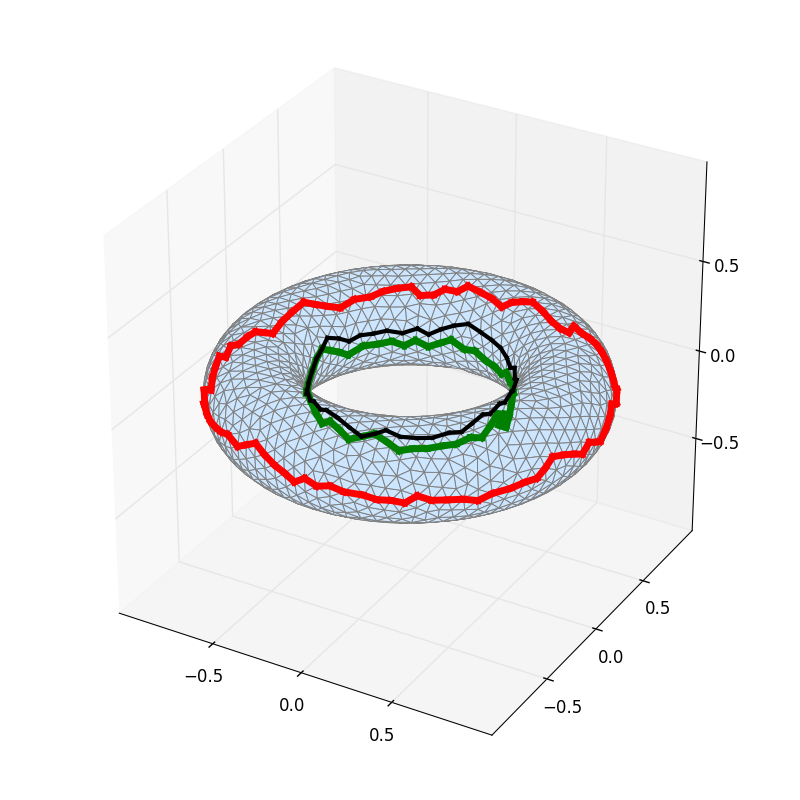}
  \includegraphics[scale=0.36]{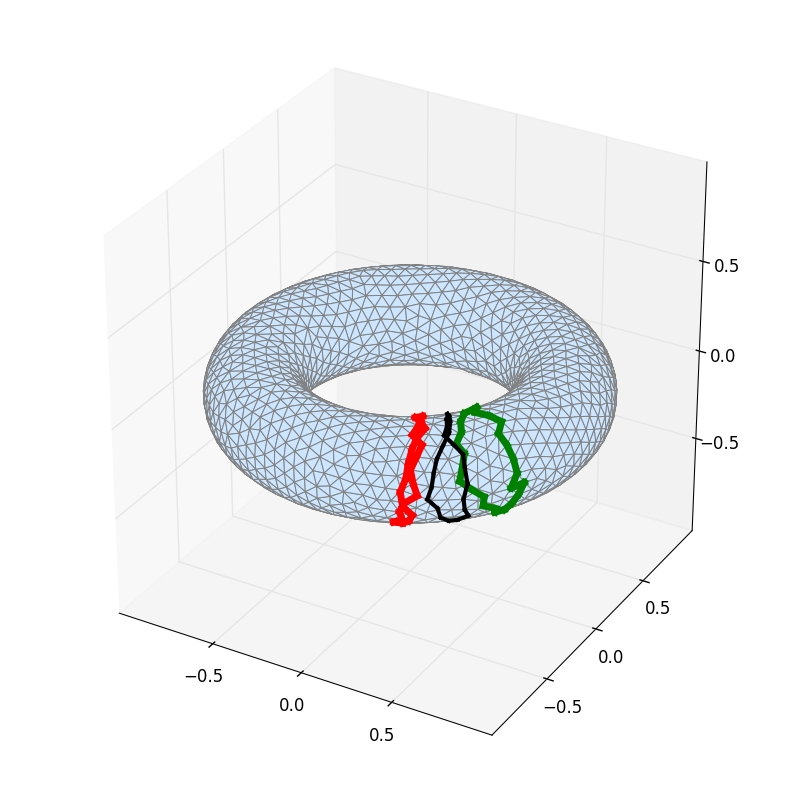}
  \caption{Simplicial median shapes of two tunnel curves and two handle curves on a torus.
    The input curves are shown in green and red, while the median shape curve is shown in black.
    \label{fig-torus2crvs}
    }
\end{figure}

\medskip

It is worth noting that the boundary matrix in question ($[\partial_2]$) is not totally unimodular in the case of the $3$-ball (for computations shown in \cref{fig-3ball3crvs}).
On the other hand, the boundary matrix of the $2$-sphere as well as the torus are indeed totally unimodular (for computations shown in \cref{fig-2sphrdeform} and \cref{fig-torus2crvs}).
At the same time, the constraint matrix of the LPs in all these cases need not be totally unimodular even if the boundary matrix is totally unimodular.   

We have also worked with instances of surfaces in $\R^3$ (i.e., codimension $1$ in 3D).
One such instance is made available as part of our open source software repository available at \\\href{https://github.com/tbtraltaa/medianshape}{https://github.com/tbtraltaa/medianshape}.

\section{Discussion} 
\label{sec:discuss}

\subsection{Theory}\label{sec:theory}

The theory we have presented in the first four sections of this paper
is just a beginning.  We list a few of the directions inviting further
work:
\begin{description}
\item[Big $\lambda$:] We have explored the codimension 1 case of the
  median problem fairly thoroughly, though the study of that problem
  for $\lambda$ that is not small, remains. (By small $\lambda$, we
  mean $\lambda$ small enough to guarantee that for each $i$, the
  multiscale flat norm of $\mdn{T} - \T_i$ equals the mass of $S_i$
  where $\partial S_i = (\mdn{T} - \T_i)$.)
\item[Non-shared Boundaries:] The case of codimension 1 input currents that do
  not share boundaries is completely open. It seems that
  there is a sort of soft transition from shared boundaries to
  non-shared boundaries that could be studied first. By this we mean
  input currents that almost share boundaries in the sense that the
  Hausdorff distance between the supports of the boundaries of all the
  input currents is much smaller than the diameter of any of supports
  the input currents, which in turn are comparable to the diameter of
  the supports of the boundaries. In other words, first study the case
  in which:
\begin{enumerate}
\item Boundaries are close: $\haus(\supp(\partial T_i),\supp(\partial T_j)) \leq \delta$ for all $i$ and $j$
\item we have $\delta << \diam (\supp(T_i))$ for all $i$,
\item and  $\diam (\supp(T_i)) \approx \diam (\supp(\partial T_i))$ for all $i$.
\end{enumerate}
where $\haus(E,F)$ is the Hausdorff distance between the sets $E$ and $F$.
\item[Higher Codimension:] We just scratched the surface of the case of input currents with
  higher codimension. In general higher codimension increases the
  difficulty of studies---see for example the technicalities
  in the study of the regularity of minimizing currents in higher
  codimension by Almgren~\cite{almgren-2000-1}, which were recently illuminated
  by the work of De Lellis and Spodaro, which by itself is still impressively
  large; see De Lellis' overview here~\cite{de-lellis-2010-almgren}
  as well
  as~\cite{de-lellis-2013-regularity-II,de-lellis-2013-regularity-III,de-lellis-2014-regularity-I,de-lellis-2011-q-valued}. See
  also the work they did with Spolaor
  here~\cite{de-lellis-2015-regularity-2d-III,de-lellis-2017-regularity-2d-II,de-lellis-2018-regularity-2d-I}.
\item[Means:] We left the entire subject of flat norm based means open, due to
  the difficulty in computing the means. It is also the case that the
  medians seem a bit more natural geometrically. On the other hand,
  means are closer to unique and their study would almost certainly
  raise interesting theoretical questions
\item[Interpolation:] What sorts of paths in the
  space of currents would be traversed if we introduced time evolving
  $\lambda_{i(t)}$'s for each $T_i$ so that the resulting objective
  function becomes $\sum_{i=1}^{\nCur}\F_{\lambda_{i(t)}}(T-T_i)$? How
  we can smoothly interpolate between shapes is of practical interest
  if the computation of those paths could be made tractable.
\end{description}

\subsection{Computation}\label{sec:compute-discuss}
It is rather surprising that we are obtaining integer optimal solutions for the median shape LPs even when the constraint matrices are not guaranteed to be totally unimodular.
Could we characterize the classes of simplicial complexes for which this property holds?
Previously, we had presented a class of simplicial complexes that are {\em non total-unimodularity neutralized} \cite{KrSm2013}, on which instances of the optimal homologous chain problem (OHCP) linear program are guaranteed to have integer optimal solutions even when the boundary matrix is not totally unimodular.
At the same time, this characterization depended critically on the coefficients of the $(p+1)$-dimensional simplices, e.g., triangles in the edge-triangle case, being all zero in the objective function.
We do not have this condition satisfied in the simplicial flat norm LP or in the median shape LP.

While we are able to solve the simplicial median shape problem as a linear program, the LPs themselves could be quite large in size, and take a long time to solve in practice.
For instance, the median shape LPs in the $3$-ball examples (shown in \cref{fig-3ball3crvs}) had more than a million columns ($1,005,774$ to be exact).
Could we design an algorithm that solves the median shape LP much faster than general LPs, the most efficient algorithms for which take time that grows as the cube of the number of columns?


\bibliographystyle{plain}

\end{document}